\documentclass[11pt,reqno]{amsart}

\usepackage[T1]{fontenc}
\usepackage[utf8]{inputenc}
\usepackage{amsmath,amssymb,amsthm,mathrsfs,bm,empheq}
\usepackage{booktabs,graphicx}
\usepackage{tikz}
\usetikzlibrary{arrows.meta,calc}
\usepackage{xcolor}
\usepackage{enumitem}
\usepackage{float}
\usepackage{placeins}
\usepackage{geometry}
\usepackage{hyperref}
\hypersetup{hidelinks}
\allowdisplaybreaks[4]

\theoremstyle{plain}
\newtheorem{lemma}{Lemma}[section]
\newtheorem{theorem}[lemma]{Theorem}
\newtheorem{proposition}[lemma]{Proposition}

\newtheorem{corollary}[lemma]{Corollary}
\newtheorem{assumption}{Assumption}[section]
\newtheorem{example}[lemma]{Example}
\theoremstyle{remark}
\newtheorem{remark}[lemma]{Remark}

\newcommand{\cP}{\mathcal P}
\newcommand{\cH}{\mathcal H}
\newcommand{\cL}{\mathcal L}
\newcommand{\cF}{\mathcal F}
\newcommand{\cU}{\mathcal U}
\newcommand{\bbR}{\mathbb R}
\newcommand{\bbS}{\mathbb S}
\newcommand{\gradG}{\nabla_G}
\newcommand{\divG}{\operatorname{div}_G}
\newcommand{\DeltaG}{\Delta_G}
\newcommand{\ip}[2]{(#1,#2)}
\newcommand{\norm}[1]{\lVert#1\rVert}

\makeatletter
\@namedef{subjclassname@2020}{%
  \textup{2020} Mathematics Subject Classification}
\makeatother

\begin{document}

\title[Semi-Lagrangian approximation for graph master equations]
{Semi-Lagrangian Approximation of Master Equations with Individual Noise on Finite Graphs}
\author{Jianbo Cui, Tonghe Dang}
\address{Department of Applied Mathematics, The Hong Kong Polytechnic
University, Hung Hom, Kowloon, Hong Kong, SAR, China}
\email{jianbo.cui@polyu.edu.hk; tonghe.dang@polyu.edu.hk(Corresponding author)}
\thanks{This work is supported by MOST National Key R\&D Program No. 2024FA1015900, the Hong Kong Research Grant Council GRF grant 15302823, GRF grant 15301025, NSFC/RGC Joint Research Scheme N$\_$PolyU5141/24, NSFC grant 12522119, NSFC grant 12301526, internal funds (P0041274, P0045336)  from Hong Kong Polytechnic University,  and the
CAS AMSS-PolyU Joint Laboratory of Applied Mathematics.}

\begin{abstract} 
We develop and analyze a fully discrete semi-Lagrangian method for vector master equations associated with mean field games (MFGs) with individual noise on finite graphs. The method discretizes both the characteristic evolution in time and the population variable on the probability simplex. A central challenge is the degeneracy of the transport coefficients near the boundary of the open simplex, where no boundary condition is prescribed. To overcome this difficulty in the analysis, we introduce an interior localization based on exact density characteristics, which keeps numerical arrivals within compact regions of uniform coefficient control. Combined with a closure argument for the coupled numerical recursion, this localization yields 
 local solvability of the updates and quantitative error bounds on compact subsets of the open simplex. Under additional $\mathcal C^2$ regularity of the exact solution and suitable space--time mesh coupling, these bounds imply first-order accuracy. Furthermore, the density and value trajectories generated by the numerical master feedback converge to the corresponding MFG characteristics at the same rate. In the potential setting, the associated reconstructed density--flux pair is admissible and asymptotically optimal for the planning problem, with an action gap quadratic in the feedback error. Numerical experiments illustrate the convergence rates, feedback responses to population redistribution, and the effects of graph topology.

\end{abstract}

\keywords{Mean field game master equation, finite graph, semi-Lagrangian method, convergence analysis, probability simplex, individual noise.}
\subjclass[2020]{65M12, 35Q89, 49N80, 35R02, 65M25}

%65M12(1991–now)Stability and convergence of numerical methods for initial value and initial-boundary value problems involving PDEs

%35Q89(2020–now)PDEs in connection with mean field game theory {For calculus of variations and optimal control, see 49N80; for game theory, see 91A16}

%49N80(2020–now)Mean field games and control {For partial differential equations, see 35Q89; for game theory, see 91A16}

%35R02(2010–now)PDEs on graphs and networks (ramified or polygonal spaces)

%65M15(1973–now)Error bounds for initial value and initial-boundary value problems involving PDEs
\maketitle

\section{Introduction}  
The master equation describes equilibrium values in mean field games (MFGs)
as functions of time, the individual state, and the population distribution
\cite{CardaliaguetDelarueLasryLions2019}.  
MFGs provide a framework for strategic
interactions in large populations, where the influence of each individual
agent on the aggregate state is negligible
\cite{AchdouLauriere2020,LasryLions2006a,LasryLions2006b,LasryLions2007,
HuangMalhameCaines2006}.
For a prescribed initial population distribution, an MFG equilibrium is
characterized by a coupled density--value evolution, whereas the master equation provides a feedback representation of such equilibria over the population distribution. 
Beyond its characteristic
connection with MFG systems, the master field also plays an important role
in finite-player approximation and fluctuation analysis
\cite{BayraktarCohen2018,CecchinPelino2019}.
When agents can occupy only finitely many states, such as locations in a
transportation network or regimes in a switching system, the state space can
be naturally represented by a finite graph. Its vertices describe the
possible states, while its edges specify the admissible transitions. MFGs and their master equations
on such state spaces 
arise naturally in networked decision problems,
including congestion management, resource allocation, and mobility decisions  
\cite{GomesMohrSouza2013}.

 Let $G=(V,E,\omega)$ be a finite connected weighted graph with the vertex set
$V=\{1,\ldots,d\}$, edge set $E\subset V\times V$ and edge weight matrix $\omega=(\omega_{ij})_{1\leq i,j\leq d}$.  In this paper, we study the numerical approximation of
the following vector master equation with individual noise on the graph $G$:
\begin{align}\label{eq_intro}
 \partial_tu(t,\mu)
 +D_\mu u(t,\mu)
 \bigl[\divG\bm B(\mu,\gradG u(t,\mu))+\DeltaG\mu\bigr]
 =\bm H(\mu,\gradG u(t,\mu))-\DeltaG u(t,\mu)
\end{align}
with the terminal condition $u(T,\mu)=g(\mu)$ for
$(t,\mu)\in[0,T]\times\cP^\circ(G),$ where $g:\mathcal P(G)\to\mathbb R^d$ is a terminal map.  Here $T>0$ is the terminal time, $\cP^\circ(G)$ is the interior of the probability simplex over the
graph vertices, and $u_i(t,\mu),i\in V$ is the equilibrium value of an agent at
vertex $i$ when the population distribution is $\mu$. The operators
$\gradG$, $\divG$, and $\DeltaG$ denote the graph gradient, divergence, and
Laplacian, respectively. In particular, the terms
$\DeltaG\mu$ and $-\DeltaG u$ represent independent Markov jumps at the
levels of the population law and the individual value, respectively. The derivative $D_\mu u$ is taken with respect to the population distribution along the tangent space of the probability simplex. The coefficients $\bm H$ and $\bm B$ denote the
Hamiltonian and transport flux,  respectively. Precise definitions of these operators and coefficients, together with the assumptions, are
given in Section~\ref{notation}.

In contrast to continuous-state models \cite{CardaliaguetDelarueLasryLions2019,BurgerDiFrancescoMarkowichWolfram2014}, the graph in \eqref{eq_intro}
represents the individual state space itself rather than a spatial
discretization. Its topology and edge weights enter directly into the
population transport and individual-noise operators; the relation with the
corresponding Euclidean operators is discussed in Section~\ref{role}.
For suitable edge mobilities, discrete optimal transport equips the
probability simplex with a Wasserstein-type geometry, under which the graph
heat equation can be interpreted as the gradient flow of the entropy
\cite{ChowHuangLiZhou2012,Maas2011}. This geometric viewpoint has motivated
studies of discrete transport
\cite{ErbarRumpfSchmitzerSimon2020,Mielke2011,ChowLiZhou2018} and graph Wasserstein geodesics
\cite{GangboLiMou2019,ErbarMaas2012,
Mielke2013,CuiDieciZhouSISC2022}, Wasserstein Hamiltonian dynamics
\cite{ChowLiZhou2019,CuiDieciZhou2022}, and
Hamilton--Jacobi equations on graph Wasserstein spaces
\cite{GangboMouSwiech2024}. 
Master equations and variational formulations for finite-state MFGs
with nonlinear activation functions were developed in
\cite{GaoLiLiu2024}. For the individual-noise master equation \eqref{eq_intro},
\cite{GangboMunozWuZhang2026} recently established classical well-posedness
and the MFG characteristic representation, together with a variational
formulation in the potential case.

Despite these analytical developments, 
numerical approximation of master equations remains much less developed.
For continuous-state master equations and related partial differential equations, existing results provide convergence rates for finite-state approximations  \cite{BertucciCecchin2024} and finite-particle approximations \cite{GermainPhamWarin2022}. A numerical method based on McKean--Vlasov FBSDEs with a convergence rate bound was analyzed in \cite{ChassagneuxCrisanDelarue2019}. 
 In the finite-state setting,  \cite{CohenLauriereZell2024} developed deep backward and deep Galerkin methods and proved uniform error bounds for the master field in terms of their loss functions.  \cite{CuiDangGraphMFG2026} recently analyzed a structure-preserving time-staggered scheme for the graph MFG system underlying \eqref{eq_intro} and established its discrete variational formulation in the potential case. This scheme, however, approximates the MFG density--value trajectory  associated with a prescribed initial distribution, rather than the master field over the population simplex. To the best of our knowledge, no fully discrete approximation with quantitative convergence estimates has previously been established for the individual-noise graph master equation \eqref{eq_intro}.

The numerical approximation of \eqref{eq_intro} 
involves
several intertwined difficulties. First, the equation is posed on the open probability simplex without prescribed boundary data, while the graph mobilities may degenerate and the nonlinear coefficients may lose uniform regularity as a density component approaches zero. 
Second, the transport field in the population variable depends nonlinearly on the unknown master field through \(\bm B(\mu,\gradG u)\), while the value dynamics simultaneously depend on the population distribution. Thus, the population transport and the value equation are strongly coupled. Third, the numerical approximation concerns the master field over a region of population distributions rather than a single characteristic trajectory, requiring estimates that are uniform with respect to the population state.

In this paper, we develop a fully discrete semi-Lagrangian
method for the master equation \eqref{eq_intro} with individual noise on graphs, by discretizing
both the characteristic evolution in time and the population variable on the
probability simplex. The method combines an implicit
density update and an implicit treatment of the graph-Laplacian terms
with continuous piecewise affine reconstruction of the next-layer master
field at off-grid population states. 
The resulting
implicit update leads to a coupled nonlinear density--value system at each
simplex node. The key to our analysis is an interior localization mechanism based on the exact density characteristics, together with a closure argument for the coupled, solution-dependent numerical recursion.  
For each compact subset of $\cP^\circ(G)$, we construct a
sequence of time-dependent interior submeshes with a separation of order \(\tau\) between the exact one-step density arrivals and the boundary of
the next active region, where $\tau$ is the time step size.  
This separation accommodates the numerical arrival error and ensures that the required interpolation elements remain in a region of uniform coefficient control.  
On these localized submeshes, two nested fixed-point arguments establish local existence and uniqueness of the numerical density--value updates near the exact characteristic pairs.  The derivative bound required by these arguments involves the next-layer nodal error through an inverse estimate, linking the solvability of the current update to the accuracy of the already constructed field. By controlling this dependence as we proceed backward in time, we close the coupled recursion and obtain quantitative error bounds on every compact subset of \(\mathcal P^\circ(G)\).  
Under additional
$\mathcal C^2$-regularity of the exact solution, the method achieves first-order convergence in $\tau$; see Theorem~\ref{conv}.  

An important feature of the master field is that, once computed, it provides a feedback law that can be used to generate the MFG dynamics from different population distributions \cite{CecchinPelino2019,CardaliaguetDelarueLasryLions2019}.   
We therefore evolve the density using the numerical master field and obtain discrete approximations
of the MFG density--value trajectories.  
We show
that the propagation of the master field error through the
feedback dynamics remains controlled and that the resulting density and
value trajectories converge to the corresponding exact MFG
characteristics with the same order as the master field approximation (see Corollary~\ref{feedback}). 
In the potential setting, the same feedback
construction further provides an
approximate solution of the associated planning problem. We reconstruct a flux from the numerical density
trajectory so that the resulting density--flux pair satisfies the
continuity equation and is therefore admissible. When its action is
compared with the optimal value, the variational structure eliminates
the first-order contribution along admissible perturbations, leaving a
quadratic remainder.
Consequently, the action gap is quadratic in the
feedback error and is controlled by the square of the master-field error
bound. This yields asymptotic optimality of the reconstructed numerical
pair and shows that the loss in the planning objective is of higher
order than the underlying trajectory and flux errors (see
Theorem~\ref{actiontheorem}). Finally, numerical experiments report the
observed convergence rates and examine the feedback response to
population redistribution and the influence of graph topology on the resulting
dynamics.

The paper is organized as follows. Section~\ref{role} motivates computing the master field from its feedback interpretation and discusses comparisons with the classical Euclidean setting.  Section~\ref{notation} introduces the graph operators, formulates the master equation and its coefficient assumptions, and recalls the well-posedness result for the exact equation.  
Section~\ref{discretization} presents the fully discrete semi-Lagrangian scheme and the main
results. Section~\ref{properties} develops the interpolation, graph-resolvent, characteristic-localization, and local-solvability estimates used in the analysis, while Section~\ref{analysis} proves the convergence results.  
Section~\ref{numerics} presents the numerical experiments. Auxiliary proofs are collected in the appendix.

\section{Numerical motivation and comparison with the Euclidean setting}
\label{role}
This section discusses the feedback interpretation of the master field and illustrates its relation to a direct MFG computation. We then compare the graph master equation with its Euclidean counterpart. The graph notation and precise equations are introduced in Section~\ref{notation}.

\subsection{Why compute the master field?}
For a prescribed initial density, a direct MFG computation determines one
forward density trajectory together with its associated backward value
trajectory. The master equation provides  a different object: the feedback
field
\[
 (t,\mu)\longmapsto u(t,\mu)\in\bbR^d,
 \qquad (t,\mu)\in[0,T]\times\cP^\circ(G).
\]
For each current population state $(t,\mu)$, this field generates, for
$s\in[t,T]$, the characteristic
\begin{align}\label{feedback-characteristic}
 \dot\rho(s)
 =\divG\bm B\bigl(\rho(s),\gradG u(s,\rho(s))\bigr)
  +\DeltaG\rho(s),
 \qquad
 \rho(t)=\mu,
 \qquad
 \phi(s)=u(s,\rho(s)).
\end{align}
As shown in Section~\ref{notation}, the resulting pair satisfies the graph
MFG system associated with $(t,\mu)$. Hence, for a prescribed population state,
solving the MFG system directly and evaluating the master field along its
characteristic yield the same density--value trajectory. The advantage of the
master field representation becomes apparent when feedback is needed at
population states away from a previously computed MFG trajectory. A direct MFG computation determines the value variable only along one trajectory and must be repeated for a new initial population state, whereas the master field provides a feedback representation over the population simplex.  
The left panel of Figure~\ref{feedbackfamily} summarizes this distinction.  
Such repeated feedback queries arise, for example, in
network route-choice models, where the current distribution of agents may be
updated from observations and used to predict the subsequent population
dynamics; see \cite{SalhabLeNyMalhame2018}.

\begin{figure}[htbp]
\centering
\begin{minipage}[t]{0.49\textwidth}
\centering
\begin{tikzpicture}[
  x=1cm,y=1cm,
  every node/.style={font=\scriptsize},
  box/.style={draw,rounded corners=1.5pt,fill=gray!8,
              minimum height=0.48cm,align=center,inner xsep=3pt},
  flow/.style={-{Latex[length=2mm]},line width=0.55pt}
]
\node[font=\small\bfseries] at (2.65,3.15) {Direct MFG};
\foreach \j/\y in {1/2.55,2/1.90,3/1.25}{
  \node (mu\j) at (0.25,\y) {$\mu_0^{(\j)}$};
  \node[box] (solve\j) at (2.15,\y) {solve MFG};
  \node (path\j) at (4.85,\y) {$(\rho^{(\j)},\phi^{(\j)})$};
  \draw[flow] (mu\j) -- (solve\j);
  \draw[flow] (solve\j) -- (path\j);
}
\draw[gray!55] (0,0.78) -- (5.3,0.78);
\begin{scope}[xshift=-0.2cm]
\node[font=\small\bfseries] at (2.65,0.42) {Master field};
\node[box] (master) at (2.65,-0.18) {compute $u(t,\mu)$ once};
\node[box] (forward) at (2.65,-1.18) {forward feedback};
\node[align=right,anchor=east] (mus) at ([xshift=-0.55cm]forward.west)
  {$\mu_0^{(1)}$\\$\mu_0^{(2)}$\\$\mu_0^{(3)}$};
\node[align=left,anchor=west] (rhos) at ([xshift=0.55cm]forward.east)
  {$(\rho^{(1)},\phi^{(1)})$\\
   $(\rho^{(2)},\phi^{(2)})$\\
   $(\rho^{(3)},\phi^{(3)})$};
\draw[flow] (master) -- (forward);
\draw[flow] (mus) -- (forward);
\draw[flow] (forward) -- (rhos);
\end{scope}
\end{tikzpicture}

\smallskip
\end{minipage}\hspace{-0.5em}
\begin{minipage}[t]{0.45\textwidth}
\centering
\includegraphics[width=\linewidth]
{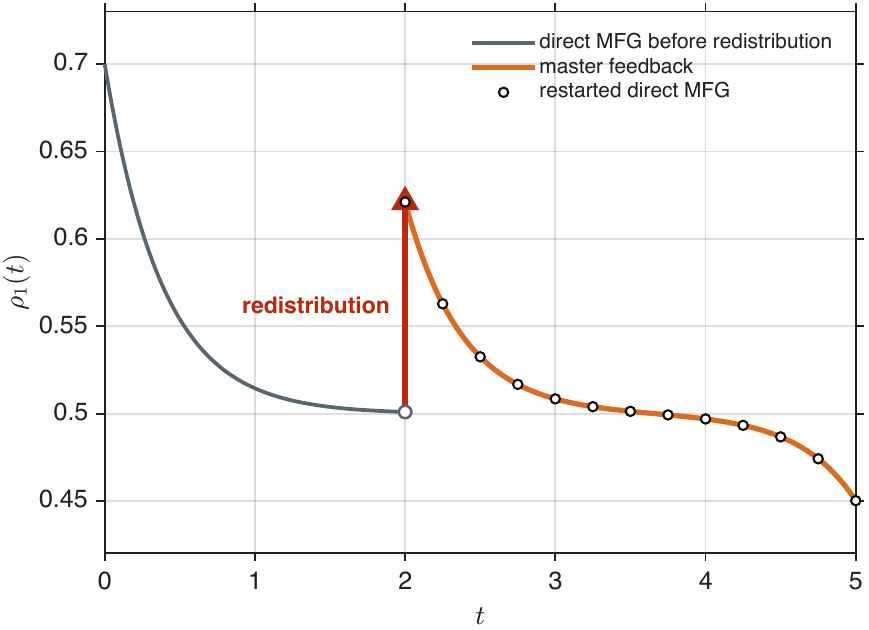}

\smallskip 
\end{minipage}
\caption{Left: A direct MFG computation produces a density--value trajectory for one
prescribed initial population state, whereas a precomputed master field can be
reused as feedback for different population states.
Right: At $t=2$, redistribution moves the population from the baseline MFG
trajectory to a new off-trajectory state. From this same state, the solid orange
curve is generated by the precomputed master feedback, while the open markers
show an independently restarted direct MFG trajectory.
Parameters: $\omega_{12}=1$, $\cF(\mu)=-\frac{3}{2}\norm{\mu}^2$,
$g(\mu)=2(\mu-\mu_{\rm tar})$ with $\mu_{\rm tar}=(1/3,2/3)$,
$\epsilon=0.01$, 128 mesh intervals, and $\tau=5/640$.
The direct MFG uses the same time step. The maximum Euclidean distance between
the densities generated by the precomputed master feedback and the direct MFG on time interval $[2,5]$ is
$2.17\times10^{-6}$.}
\label{feedbackfamily}
\end{figure}

The right panel of Figure~\ref{feedbackfamily} gives a numerical illustration of this feedback
reuse. We consider $d=2$ and first compute the
numerical master field on $[0,5]\times\mathcal P_\epsilon(G)$ with
$\epsilon=0.01$. Starting from \(\mu_0=(0.70,0.30)\), the baseline MFG density trajectory satisfies \(\rho_1(2^-)=0.5\). At $t=2$, we impose an 
instantaneous redistribution that transfers a mass of $0.12$ from vertex
$2$ to vertex $1$, moving the population to $\rho_1(2^+)=0.62$. This redistribution produces a new state away from the baseline trajectory. Since the master field has already been computed on the computational simplex, the post-redistribution trajectory is generated from \eqref{feedback-characteristic} with \(\rho(2^+)
\) as the new initial population state, without recomputing the backward value equation. By contrast, a direct MFG computation requires restarting the coupled forward--backward system on \([2,T]\) from \(\rho(2^+)\). As shown in the right panel of Figure \ref{feedbackfamily}, the trajectory generated by the precomputed master feedback nearly coincides with the independently restarted direct MFG trajectory, providing a numerical validation of the computed feedback.  
This feedback reuse requires numerically approximating \(\mu\mapsto u(t,\mu)\) over a region of the probability simplex. Section~\ref{numerics} further examines redistribution responses and their dependence on graph topology.

\subsection{Comparison with the Euclidean setting}
\label{euclidean-comparison}

We next compare the individual-noise structure of the graph master equation
with its Euclidean counterpart by considering the stochastic dynamics of the
individual state.  In a Euclidean state space, individual noise is usually generated
by Brownian motion, whereas on a finite graph it is generated by the jumps of
a continuous-time Markov chain \cite{ChowHuangLiZhou2012}.  Their
infinitesimal generators determine the corresponding diffusion operators in
the master equations.

Let $\mathrm dX_t=\sqrt{2\nu}\,\mathrm dW_t$ on the periodic state space
$\mathbb T^q$, where $q\in\mathbb N_+,$ \(\nu>0\) is the diffusion coefficient and $W_t$ is a \(q\)-dimensional Brownian motion, and let $Y_t$ be a continuous-time Markov chain on $V$ with
transition rate $\omega_{ij}$ from $i$ to $j$.  Their infinitesimal generators
satisfy
\begin{align*}
&\lim_{r\downarrow0}\frac1r
 \mathbb E\bigl[\psi(X_{t+r})-\psi(X_t)\mid X_t=x\bigr]
 =\nu\Delta_x\psi(x),\quad \psi\in\mathcal C^2(\mathbb T^q),\\
&\lim_{r\downarrow0}\frac1r
 \mathbb E\bigl[v(Y_{t+r})-v(Y_t)\mid Y_t=i\bigr] 
 =\sum_{j\sim i}\omega_{ij}(v_j-v_i)=(\DeltaG v)_i,
 \quad v:V\to\mathbb R, 
 \end{align*} where $j\sim i$ means that $(i,j)\in E.$ Thus, the Euclidean and graph Laplacians are the infinitesimal generators of Brownian diffusion and Markov jump dynamics, respectively. In the graph setting, the vertices form the state space of the agents, while the topology and edge weights determine the admissible transitions and their rates; see
\cite{ChowHuangLiZhou2012,Maas2011}.  In the fully discrete method introduced in Section \ref{discretization}, the graph remains fixed, while the population variable is discretized on a simplicial mesh.

This correspondence carries over to the master equations.  Let $U(t,x,m)$
denote a smooth Euclidean master field, where $t\in[0,T],x\in\mathbb T^q$ is the state
of the representative agent and $m\in\mathcal P(\mathbb T^q)$ is the population distribution. Individual Brownian noise contributes a Laplacian acting on the individual state and an additional diffusion term acting through the population distribution. In the Euclidean setting, these terms take the form 
$$ \nu\Delta_xU(t,x,m) \quad\text{and}\quad \nu\int_{\mathbb T^q} \Delta_y\frac{\delta U}{\delta m}(t,x,m,y)\,m(\mathrm dy), $$ 
respectively, where \(\delta U/\delta m\) denotes the functional derivative with respect to the population measure. Their finite-state counterparts are the graph Laplacian  \(\Delta_Gu\) acting on the value index and \(D_\mu u[\Delta_G\mu]\), respectively.  
Together with population transport,
the population component of \eqref{eq_intro} is 
$ 
 D_\mu u\bigl[\divG\bm B(\mu,\gradG u)+\DeltaG\mu\bigr].$   Thus the graph topology and edge
weights enter the master equation through both the individual-state dynamics and the evolution of the population distribution. 

The main analytical issue arises near the boundary of the population
simplex, which consists of densities with at least one vanishing component.
For the logarithmic-mean model given in Example \ref{exam}, $\theta(\mu_i,\mu_j)=0$ when
$\mu_i\mu_j=0$, and the mobility and its derivatives are not uniformly
regular as $\mu$ approaches $\partial\cP(G)$.  The master equation is posed
on $\cP^\circ(G)$, with no Dirichlet or Neumann data prescribed on the
simplex boundary. Hence, the convergence analysis is carried out on interior
regions that stay a positive distance from $\partial\cP(G)$, where the
coefficients are uniformly controlled and no boundary values are needed.
This is the reason for the characteristic localization introduced below (see Proposition~\ref{tube}).

\section{Preliminaries and master equations on graphs}\label{notation}

In this section, we introduce the graph notation, formulate the vector master
equation, and state the structural assumptions.  We then state the existence,
uniqueness, and regularity of the classical solution of \eqref{eq_intro}, with the
logarithmic-mean quadratic model as an example.

\subsection{Notation}

Let $G=(V,E,\omega)$ be a finite connected undirected graph without
self-loops, where $V=\{1,\ldots,d\}$ is the vertex set with $d\ge2$, $E$ is the edge set, and $\omega=(\omega_{ij})_{1\leq i,j\leq d}$ is the weight matrix.  Every undirected edge
occurs in $E$ with both orientations, and $\omega_{ij}=\omega_{ji}>0$ on
$E$; set $\omega_{ij}=0$ when $(i,j)\notin E$.  We write $j\sim i$ when $(i,j)\in E$. 
For $x,y\in\bbR^d$, let 
$\ip{x}{y}=\sum_{i=1}^d x_i y_i$, $\norm{x}=\ip{x}{x}^{1/2}$, and
$\norm{x}_{\ell^\infty}=\max_{i=1}^d|x_i|$.  Let
$\bbS^{d\times d}$ be the space of skew-symmetric matrices $p$, with
$p_{ji}=-p_{ij}$ and $p_{ij}=0$ when $(i,j)\notin E$, equipped with
\begin{align*}
\ip{p}{q}_E=\frac12\sum_{(i,j)\in E}p_{ij}q_{ij},\quad 
\norm{p}_E=\ip{p}{p}_E^{1/2}.
\end{align*}  We also write
$\norm{p}_{\ell^\infty(E)}=\max_{(i,j)\in E}|p_{ij}|$. 
For $v\in\bbR^d$ and $p\in\mathbb S^{d\times d}$, define
$ 
(\gradG v)_{ij}:=\sqrt{\omega_{ij}}(v_i-v_j)$ and $(\divG p)_i:=-\sum_{j\sim i}\sqrt{\omega_{ij}}p_{ij}.$ 
Then
\begin{equation}\label{ibp}
 \ip{\divG p}{v}=-\ip{p}{\gradG v}_E.
\end{equation}
The graph Laplacian is $\DeltaG:=\divG\gradG$, so
\begin{equation}\label{lap}
 (\DeltaG v)_i=\sum_{j\sim i}\omega_{ij}(v_j-v_i),
 \qquad
 \ip{\DeltaG v}{w}=-\ip{\gradG v}{\gradG w}_E.
\end{equation} 

The probability simplex of the graph $G$ is $
 \cP(G)=\bigl\{\mu\in[0,1]^d:\textstyle\sum_{i=1}^d\mu_i=1\bigr\}.$
For $0\le\epsilon<1/d$, set
$
 \cP_\epsilon=\bigl\{\mu\in\cP(G):\mu_i\ge\epsilon,\;1\leq i\leq d\bigr\},\;
 \cP_\epsilon^\circ
 =\bigl\{\mu\in\cP(G):\mu_i>\epsilon,
 \ 1\le i\le d\bigr\}.
$ 
Thus $\cP_\epsilon^\circ$ is the relative interior of $\cP_\epsilon$ in $\bbR_1^d$, and
$\cP_0=\cP(G)$ and
$\cP_0^\circ=\cP^\circ(G)$. 
The tangent space and affine hull are, respectively,
$$
 \bbR_0^d=\bigl\{\eta\in\bbR^d:\textstyle\sum_{i=1}^d\eta_i=0\bigr\},
 \qquad
 \bbR_1^d=\bigl\{\mu\in\bbR^d:\textstyle\sum_{i=1}^d\mu_i=1\bigr\}.
$$
If $f:\cP^\circ(G)\to\bbR$ is differentiable, then
$D_\mu f(\mu)\in\mathcal L(\bbR_0^d,\bbR)$. Choose a vector
$q_f(\mu)\in\bbR^d$ such that
$D_\mu f(\mu)[\eta]=\ip{q_f(\mu)}{\eta}$ for every
$\eta\in\bbR_0^d$, and define
$\nabla_{\mathcal W}f(\mu):=\gradG q_f(\mu)$.
This definition is independent of the representative, since any two choices of
$q_f(\mu)$ differ by a multiple of $\bm1$, and $\gradG\bm1=0$. Here, we denote by
$\bm1=(1,\ldots,1)^T\in\bbR^d$. 
For the vector master field $u=(u_1,\ldots,u_d)$, 
$\gradG u(t,\mu)$ takes graph differences in the agent-state
index;  
$\nabla_{\mathcal W}u_i(t,\mu)=\gradG q_{u_i}(t,\mu)$ is obtained
from the derivative of the scalar component $u_i$ with respect to the
population variable.

For a finite set $A$, $|A|$ denotes its cardinality. 
For a subset $A$ of a Euclidean space,
$\operatorname{diam}(A):=\sup_{x,y\in A}\norm{x-y}$.
For positive quantities $a$ and $b$,
we write $a\asymp b$ if there are constants $0<c\le C$, independent of discretization
parameters, such that $cb\le a\le Cb$.  
Throughout the paper, we use $\mu$ for a generic population variable in the master equation and
$\rho$ for a density trajectory along the associated characteristics.  
For maps defined on $\cP^\circ(G)$, 
all derivatives in the density variable act on
the tangent space $\bbR_0^d$.

\subsection{The vector master equation}

Let
$\bm H:\cP^\circ(G)\times\bbS^{d\times d}\to\bbR^d$ and
$\bm B:\cP^\circ(G)\times\bbS^{d\times d}\to\bbS^{d\times d}$ be the
Hamiltonian and transport coefficients, respectively, and let
$g:\cP(G)\to\bbR^d$ be the terminal map. 
We study the vector master field
$u:[0,T]\times\cP^\circ(G)\to\bbR^d$ solving \eqref{eq_intro},
with the terminal condition $u(T,\mu)=g(\mu)$.  
We write
\begin{align}\label{mathcalA}
 \mathcal A(\mu,z):=\divG\bm B(\mu,\gradG z)+\DeltaG\mu.
\end{align} 
This is the density velocity  in \eqref{eq_intro}. Because $\bm B(\mu,\gradG z)$ is skew-symmetric, $\sum_i(\divG\bm B)_i=0$; likewise $\sum_i(\DeltaG\mu)_i=0$. Hence $\mathcal A(\mu,z)\in\bbR_0^d$, so the characteristic dynamics preserve total mass. 
For $(t,\mu)\in[0,T]\times\cP^\circ(G)$, the characteristic system
associated with \eqref{eq_intro} is
\begin{equation}\label{mfg}
\begin{cases}
 \dot\phi(s)=\bm H(\rho(s),\gradG\phi(s))-\DeltaG\phi(s),\\
\dot\rho(s)=\mathcal A(\rho(s),\phi(s)),\\
 \rho(t)=\mu,\qquad \phi(T)=g(\rho(T)),\qquad t\le s\le T,
\end{cases}
\end{equation}
which is referred to as the mean field game (MFG) on the graph.
Indeed, if $u$ solves \eqref{eq_intro}, define, for
$(t,\mu)\in[0,T]\times\cP^\circ(G)$ and $t\le s\le T$,
\begin{equation}\label{representation}
\begin{aligned}
 \dot\rho(s;t,\mu)
 &=\mathcal A\bigl(\rho(s;t,\mu),u(s,\rho(s;t,\mu))\bigr),
 &\qquad \rho(t;t,\mu)&=\mu,\\
 \phi(s;t,\mu)&:=u\bigl(s,\rho(s;t,\mu)\bigr).
\end{aligned}
\end{equation}
Then 
$
 \frac{\mathrm d}{\mathrm ds}\phi(s;t,\mu) =\bm H\bigl(\rho(s;t,\mu),\gradG\phi(s;t,\mu)\bigr)
  -\DeltaG\phi(s;t,\mu)
$ 
and $\phi(T;t,\mu)=g(\rho(T;t,\mu))$.  Thus
$(\rho(\cdot;t,\mu),\phi(\cdot;t,\mu))$ satisfies \eqref{mfg}.
This characteristic system will supply the density and value
variables used by the semi-Lagrangian discretization (see \eqref{scheme}). 

Equation \eqref{eq_intro} has the following equivalent componentwise formulation, which makes the individual-noise contribution explicit.  Define
$ (\Delta_{\rm ind}u)_i=-\ip{\nabla_{\mathcal W}u_i}{\gradG\mu}_E.$
By \eqref{ibp}, we have 
$$
 D_\mu u_i\bigl[\divG\bm B(\mu,\gradG u)+\DeltaG\mu\bigr]
 =-\ip{\nabla_{\mathcal W}u_i}{\bm B(\mu,\gradG u)}_E
 +(\Delta_{\rm ind}u)_i.
$$
Thus \eqref{eq_intro} is equivalent to
$$
 \partial_tu_i-\ip{\nabla_{\mathcal W}u_i}{\bm B(\mu,\gradG u)}_E
 +(\Delta_{\rm ind}u)_i
 =\bm H_i(\mu,\gradG u)-(\DeltaG u)_i,
 \qquad 1\le i\le d,
$$
which is the componentwise form of the master equation with the individual-noise
operator; see also \cite{GangboMunozWuZhang2026}. 
To obtain the existence of the exact solution, we introduce the following regularity, coercivity, and boundary degeneracy
conditions for the coefficients; see \cite[(3.1)--(3.4)]{GangboMunozWuZhang2026}.

\begin{assumption}
\label{structure}
Let
$\bm H\in\mathcal C^1(\cP^\circ(G)\times\bbS^{d\times d};\bbR^d)$,
$\bm B\in\mathcal C^1(\cP^\circ(G)\times\bbS^{d\times d};
\bbS^{d\times d})$, and 
$g\in\mathcal C^1(\cP^{\circ}(G);\bbR^d)$. For some $C_0,C_H,C_g\ge0$,
\begin{align}\label{data}
 \ip{\bm B(\mu,p)}{p}_E\ge\ip{\bm H(\mu,p)}{\mu}-C_0,\quad 
\bm H_i(\mu,p)\ge-C_H\quad
 \text{for }(\mu,p)\in\cP^\circ(G)\times\bbS^{d\times d},
 \end{align}
 and $
 \norm{g(\mu)}_{\ell^\infty}\le C_g
 $ for $\mu\in\cP(G)$.
For every edge $(i,j)$ and every
$(\mu,p)\in\cP^\circ(G)\times\bbS^{d\times d}$,
\begin{equation}\label{boundary}
 |\bm B_{ij}(\mu,p)|
 \le(\mu_i+\mu_j)\mathfrak a(\mu_j/\mu_i,p),
\end{equation}
where $\mathfrak a:(0,\infty)\times\bbS^{d\times d}\to[0,\infty)$ is locally
bounded, and $\mathfrak a(u,p)\to0$ as $u\to0^+$ or $u\to\infty$,
locally uniformly in $p\in\bbS^{d\times d}$.
\end{assumption}

To ensure the uniqueness of the exact solution, we impose the terminal and differential Lasry--Lions monotonicity conditions in the
next assumption (see \cite[(3.7)]{GangboMunozWuZhang2026}). 

\begin{assumption}\label{mono}
The terminal map $g$ is monotone:
\begin{equation}\label{terminal}
 \ip{g(\mu)-g(\nu)}{\mu-\nu}\ge0,
 \qquad \mu,\nu\in\cP^\circ(G).
\end{equation}
Moreover, the differential Lasry--Lions condition holds: for
$(\mu,p)\in\cP^\circ(G)\times\bbS^{d\times d}$ and
$(\eta,r)\in\bbR_0^d\times\bbS^{d\times d}$,
\begin{equation}\label{dll}
 \mathcal M_{\mu,p}(\eta,r)<0
 \qquad\text{whenever }(\eta,r)\ne(0,0),
\end{equation}
where
\[
 \mathcal M_{\mu,p}(\eta,r)
 :=\ip{D_\mu\bm H(\mu,p)[\eta]+D_p\bm H(\mu,p)[r]}{\eta}
 -\ip{D_\mu\bm B(\mu,p)[\eta]+D_p\bm B(\mu,p)[r]}{r}_E.
\]
\end{assumption} 
We next present an example of coefficients satisfying the two assumptions.
\begin{example}\label{exam}
For $\mu\in\cP^\circ(G)$ and
$p\in\bbS^{d\times d}$, let
$\bm H(\mu,p)=\nabla_\mu\cH_\theta(\mu,-p)-\alpha\mu$ and
$\bm B(\mu,p)=\theta(\mu)p$ with 
$[\theta(\mu)p]_{ij}:=\theta(\mu_i,\mu_j)p_{ij}$ for $(i,j)\in E$, where 
\begin{equation}\label{model}
 \cH_\theta(\mu,q)
 =\frac14\sum_{(i,j)\in E}\theta(\mu_i,\mu_j)q_{ij}^2,
 \qquad
 \theta(r,s)=\frac{r-s}{\log r-\log s},
\end{equation}
with $\theta(r,r)=r$.  
For $\alpha>0$, these coefficients satisfy the conditions on $\bm H$ and
$\bm B$ in Assumption~\ref{structure} and the differential condition
\eqref{dll}; see \cite[Example~2.4]{CuiDangGraphMFG2026}.
Together with a terminal map $g$ satisfying the remaining conditions in
Assumptions~\ref{structure} and \ref{mono} (e.g. $g(\mu)=(\mu-\mu_{\rm tar})$ with some fixed $\mu_{\rm tar}\in\mathcal P^{\circ}(G)$), the model satisfies both
assumptions.
\end{example}

The next proposition gives the existence, uniqueness, regularity, and
Lasry--Lions monotonicity of the classical solution to \eqref{eq_intro}.
The $\mathcal C^1$-well-posedness follows by adapting the proofs of
\cite[Propositions~3.5--3.6]{GangboMunozWuZhang2026} to the simplex.
This adaptation, together with the $\mathcal C^2$ regularity and
monotonicity, is justified in the appendix.

\begin{proposition}\label{regularity}
Let Assumptions~\ref{structure} and \ref{mono} hold.  Then
\eqref{eq_intro} admits a unique
classical solution $u\in\mathcal C^1([0,T]\times\cP^\circ(G);\bbR^d)$. More precisely, 
for every $(t,\mu)\in[0,T]\times\cP^\circ(G)$, let $(\rho,\phi)$ be the
unique solution of \eqref{mfg}.  Then
\[
u(t,\mu)=\phi(t),\qquad
\phi(s)=u(s,\rho(s)),\quad t\le s\le T.
\] 
Moreover, $u(t,\cdot)$ is monotone:
\begin{equation}\label{umono}
 \ip{u(t,\mu)-u(t,\nu)}{\mu-\nu}\ge0,
 \qquad t\in[0,T],\quad \mu,\nu\in\cP^\circ(G).
\end{equation}
If in addition,
$
 \bm H\in\mathcal C^2(\cP^\circ(G)\times\bbS^{d\times d};\bbR^d),\;
 \bm B\in\mathcal C^2(\cP^\circ(G)\times\bbS^{d\times d};
 \bbS^{d\times d}),$
and $g|_{\cP^\circ(G)}\in
\mathcal C^2(\cP^\circ(G);\bbR^d)$, then
$u\in\mathcal C^2([0,T]\times\cP^\circ(G);\bbR^d)$.  Consequently,
$D_\mu u$ is locally Lipschitz in $\mu$, uniformly in $t$, namely, for every compact
convex set
$K\Subset\cP^\circ(G)$, there is $L_K>0$ such that
\begin{equation}\label{dulip}
 \norm{D_\mu u(t,\mu)-D_\mu u(t,\nu)}_{\mathcal L(\bbR_0^d,\bbR^d)}
 \le L_K\norm{\mu-\nu},
 \qquad t\in[0,T],\quad \mu,\nu\in K.
\end{equation}
\end{proposition}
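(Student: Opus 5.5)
The plan is to prove Proposition~\ref{regularity} through the MFG characteristic system \eqref{mfg}, following the strategy of \cite[Propositions~3.5--3.6]{GangboMunozWuZhang2026} on the simplex.

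\textbf{Step 1: existence for the forward--backward system.} For each $(t,\mu)\in[0,T]\times\cP^\circ(G)$ we solve \eqref{mfg}.

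\begin{itemize}
\item We first derive a priori bounds. Pairing the $\phi$-equation with $\rho$ and using \eqref{ibp}, \eqref{lap} and \eqref{data}, the quantity $\frac{\mathrm d}{\mathrm ds}\ip{\phi}{\rho}$ is controlled. Together with $\bm H_i\ge-C_H$ and $\norm{g}_{\ell^\infty}\le C_g$, this gives a uniform bound on $\norm{\phi}_{\ell^\infty}$.
\item We then show that $\rho(s)$ stays in a compact set $\cP_{\delta}$, with $\delta=\delta(t,\mu)>0$ locally uniform in $\mu$. The boundary condition \eqref{boundary} makes the transport flux out of a nearly empty vertex negligible compared with the diffusive inflow from $\DeltaG\rho$: if $\rho_i$ is small relative to a neighbour $\rho_j$, then $\mathfrak a(\rho_j/\rho_i,p)\to0$ while $\omega_{ij}(\rho_j-\rho_i)>0$. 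Connectivity of $G$ propagates this positivity to all vertices.
\item With these bounds, a Schauder or Leray--Schauder fixed point on $\phi\mapsto\rho\mapsto\phi$ gives existence.
\end{itemize}

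\textbf{Step 2: uniqueness.} Take two solutions and set $\eta=\rho^1-\rho^2$ and $r=\gradG(\phi^1-\phi^2)$. Compute $\frac{\mathrm d}{\mathrm ds}\ip{\phi^1-\phi^2}{\rho^1-\rho^2}$. The Laplacian terms cancel by \eqref{lap}. Writing the remaining differences as integrals of derivatives along segments, this derivative equals $\int_0^1\mathcal M_{\mu_\lambda,p_\lambda}(\eta,r)\,\mathrm d\lambda$, which is negative unless $(\eta,r)=0$. The boundary terms are $\le 0$ at $s=t$, since $\eta(t)=0$, and $\ge0$ at $s=T$ by \eqref{terminal}. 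Hence $(\eta,r)\equiv0$ by \eqref{dll}. Then $\phi^1-\phi^2$ is constant in the vertex index, and the $\phi$-equation with $\rho^1=\rho^2$ forces it to vanish.

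\textbf{Step 3: the master field and its $\mathcal C^1$ regularity.} Define $u(t,\mu):=\phi(t)$. The flow property $\phi(s)=u(s,\rho(s))$ follows from uniqueness applied on $[s,T]$. The main step is differentiability in $\mu$, for which we use the linearized system along the characteristic.

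\begin{itemize}
\item For the linearized system with data $(\eta(t),r)=(\xi,\cdot)$, $\xi\in\bbR_0^d$, the same duality identity, now with the quadratic form $\mathcal M_{\rho,\gradG\phi}$ exactly, gives uniqueness.
\item The linear system is well posed by the Fredholm alternative.
\item Strict negativity of $\mathcal M$ on the compact tube of Step 1, combined with continuity, gives bounds on the linearized solution that are uniform in $(t,\mu)$ on compacts.
\item Difference quotients converge to the linearized solution, so $D_\mu u$ exists and is continuous. Then $\partial_t u$ exists via the flow property, and differentiating $\phi(s)=u(s,\rho(s))$ at $s=t$ yields \eqref{eq_intro}.
\end{itemize}

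Uniqueness of classical solutions follows because any classical solution generates, through \eqref{representation}, a solution of \eqref{mfg}. \emph{I expect this step to be the main obstacle:} we need local uniformity of the interior confinement $\delta$ and of the coercivity constant of $\mathcal M$ on the tube, to justify passing to the limit in the difference quotients.

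\textbf{Step 4: monotonicity and $\mathcal C^2$ regularity.} For \eqref{umono}, take characteristics from $(t,\mu)$ and $(t,\nu)$. The same duality computation gives
\[
\ip{u(t,\mu)-u(t,\nu)}{\mu-\nu}=\ip{g(\rho^1(T))-g(\rho^2(T))}{\rho^1(T)-\rho^2(T)}-\int_t^T(\text{integral of }\mathcal M)\,\mathrm ds\ge0 .
\]
Under the $\mathcal C^2$ hypotheses we differentiate the linearized system once more. The second-order system has the same linear principal part with a source quadratic in the first variations, so Step 3 gives existence and continuity of $D^2_\mu u$. Finally, \eqref{dulip} follows from the mean value theorem on the convex compact set $K$, using continuity of $D_\mu^2u$ on $[0,T]\times K$.
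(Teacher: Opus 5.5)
Your argument for \eqref{umono} (duality identity, integrated \eqref{dll}, terminal monotonicity \eqref{terminal}) is the paper's. So is your uniqueness argument for the linearized problem, which uses the coercive form of \eqref{dll} on the compact trajectory set. Your proof of \eqref{dulip} by the mean value theorem on convex $K$ also matches.

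The paper does not re-derive $\mathcal C^1$ well-posedness. It checks that \eqref{dll} integrates to the strict Lasry--Lions inequality and then invokes \cite[Propositions~3.5--3.6, Remark~3.7]{GangboMunozWuZhang2026} and the positivity estimate \cite[Theorem~1.1]{GangboMunozWuZhang2026}. Your self-contained Step~1 has a genuine gap exactly at that point.

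The pairing gives $\frac{\mathrm d}{\mathrm ds}\ip{\phi}{\rho}=\ip{\bm H(\rho,\gradG\phi)}{\rho}-\ip{\bm B(\rho,\gradG\phi)}{\gradG\phi}_E\le C_0$, so $\ip{\phi(s)}{\rho(s)}\ge-C_g-C_0T$. The bound $\bm H_i\ge-C_H$ with the maximum principle gives $\max_i\phi_i(s)\le C_g+C_HT$. Combined, these yield only $\phi_i\ge-C/\rho_i$, not a bound on $\norm{\phi}_{\ell^\infty}$.

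Your positivity bullet uses $\mathfrak a(\rho_j/\rho_i,p)\to0$. By \eqref{boundary} this holds only locally uniformly in $p$, i.e.\ for bounded $\gradG\phi$. So the two bullets depend on each other, and you need an extra input to break the loop; the cited positivity theorem is what supplies it in the paper.

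The same confinement issue affects your one-line uniqueness of classical solutions. For an arbitrary $v\in\mathcal C^1([0,T]\times\cP^\circ(G);\bbR^d)$, you must first show that $\dot\rho=\mathcal A(\rho,v(s,\rho))$ does not reach $\partial\cP(G)$ before time $T$. Otherwise \eqref{representation} does not produce a solution of \eqref{mfg}.

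For the $\mathcal C^2$ part, differentiating the linearized system once more gives $D^2_\mu u$. The statement, however, is $u\in\mathcal C^2([0,T]\times\cP^\circ(G);\bbR^d)$, which also requires continuous $\partial_tD_\mu u$ and $\partial_t^2u$ up to $t=T$. You can recover these from $\partial_tu=-D_\mu u[\mathcal A(\mu,u)]+\bm H(\mu,\gradG u)-\DeltaG u$ and a Schwarz-type interchange of derivatives, but you need to say so.

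The paper's route avoids this:
\begin{itemize}
\item It rescales \eqref{mfg} to $[0,1]$ and treats $(t,\mu)$, with $t$ in an open interval $I\supset[0,T]$, as parameters of a $\mathcal C^2$ map $\mathfrak F$.
\item It shows $D_{(\widehat\phi,\widehat\rho)}\mathfrak F$ is an isomorphism. Injectivity comes from the same coercive duality you use. Surjectivity comes from shooting in $\psi(0)$ plus finite-dimensionality.
\item The implicit function theorem then gives joint $\mathcal C^2$ dependence on $(t,\mu)$, including $t=T$, in one step.
\end{itemize}
Your difference-quotient scheme is a hand-built version of this theorem. It needs fresh uniform stability estimates at each order and separate handling of the time derivatives.
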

\section{Fully discrete semi-Lagrangian method and main results}\label{discretization}

This section presents the fully discrete semi-Lagrangian method for
\eqref{eq_intro} and states its main result, which establishes local well-posedness and error estimates. As a corollary, we derive convergence estimates for the density and value trajectories generated by the numerical feedback.   
In the potential setting, we further present the asymptotic optimality of the associated numerical feedback flux and a quadratic estimate for its action gap.  

\subsection{Fully discrete semi-Lagrangian method}\label{sec4-1}

Let $N\ge1$, $\tau=T/N$, and
$t_n=n\tau$ for $0\le n\le N$.  Let
$\epsilon\in[0,1/d)$ and let $J\ge d$ be an integer. Take the
uniform simplicial triangulation $\mathcal T_J$ of $\cP(G)$ as specified in \cite[Section~4.1]{CuiDangMou2026}, whose vertex
set is
\[
 \Bigl\{\frac kJ:k=(k_1,\ldots,k_d)\in\mathbb N_0^d,
 \ \sum_{i=1}^dk_i=J\Bigr\}.
\]
Define the affine map 
\[
 F_\epsilon:\cP(G)\to\cP_\epsilon,\qquad
F_\epsilon(\mu):=\epsilon(1,\ldots,1)+(1-d\epsilon)\mu,
\]
and set
$ 
 h=h(J,\epsilon):=(1-d\epsilon)
 \max_{S\in\mathcal T_J}\operatorname{diam}(S).
$ 
The triangulation of $\cP_\epsilon$ and its vertex set are
\begin{equation}
 \mathfrak T_{h,\epsilon}
 :=\bigl\{F_\epsilon(S):S\in\mathcal T_J\bigr\},
 \quad
 \mathfrak M_{h,\epsilon}
 :=\Bigl\{\epsilon(1,\ldots,1)+(1-d\epsilon)\frac kJ:
 k\in\mathbb N_0^d,\ \sum_{i=1}^dk_i=J\Bigr\}.
\end{equation}
Since $F_\epsilon$ scales all distances by $1-d\epsilon$, the resulting
family is shape-regular and quasi-uniform.  Hence there is
$c>0$, independent of $J$,
such that
$
ch\le\operatorname{diam}(S)\le h,\; S\in\mathfrak T_{h,\epsilon}. 
$ 
Moreover,
$ 
 h\asymp(1-d\epsilon)J^{-1}.
$ 
When $\epsilon=0$, we write
$\mathfrak M_h:=\mathfrak M_{h,0}$ and
$\mathfrak T_h:=\mathfrak T_{h,0}=\mathcal T_J$.

\begin{figure}[htbp]
\centering
\begin{tikzpicture}[x=1cm,y=1cm,>=Latex]
  \tikzset{
    mesh/.style={black!25,line width=0.45pt},
    boundary/.style={black!65,line width=0.75pt},
    nodept/.style={circle,fill=white,draw=black!55,line width=0.4pt,
      inner sep=0pt,minimum size=2.7pt}
  }

  \begin{scope}[xshift=-3.5cm]
    \coordinate (A) at (0,3.0);
    \coordinate (B) at (-2.1,0);
    \coordinate (C) at (2.1,0);
    \node[font=\small] at (0,3.65)
      {reference mesh $\mathcal T_J$};
    \coordinate (Pone) at (barycentric cs:A=2,B=1,C=1);
    \coordinate (Ptwo) at (barycentric cs:A=1,B=2,C=1);
    \coordinate (Pthree) at (barycentric cs:A=1,B=1,C=2);
    \filldraw[fill=blue!14,draw=blue!65!black,line width=0.7pt]
      (Pone)--(Ptwo)--(Pthree)--cycle;
    \foreach \j in {0,...,4}{
      \pgfmathsetmacro{\s}{\j/4}
      \draw[mesh] ($(A)!\s!(B)$)--($(A)!\s!(C)$);
      \draw[mesh] ($(B)!\s!(A)$)--($(B)!\s!(C)$);
      \draw[mesh] ($(C)!\s!(A)$)--($(C)!\s!(B)$);
    }
    \draw[boundary] (A)--(B)--(C)--cycle;
    \foreach \kone in {0,...,4}{
      \pgfmathtruncatemacro{\rem}{4-\kone}
      \foreach \ktwo in {0,...,\rem}{
        \pgfmathtruncatemacro{\kthree}{4-\kone-\ktwo}
        \node[nodept] at
          (barycentric cs:A=\kone,B=\ktwo,C=\kthree) {};
      }
    }
    \node[above=2pt,font=\scriptsize] at (A) {$e_1$};
    \node[below left=1pt,font=\scriptsize] at (B) {$e_2$};
    \node[below right=1pt,font=\scriptsize] at (C) {$e_3$};
    \node[font=\scriptsize] at (0,-0.55) {$\mathcal P(G)$, $J=4$};
  \end{scope}

  \draw[-{Latex[length=2mm]},line width=0.9pt]
    (-0.85,1.5)--(0.85,1.5)
    node[midway,above=3pt,font=\small] {$F_\epsilon$};

  \begin{scope}[xshift=3.5cm]
    \coordinate (Ao) at (0,3.0);
    \coordinate (Bo) at (-2.1,0);
    \coordinate (Co) at (2.1,0);
    \coordinate (G) at (0,1.0);
    \coordinate (An) at ($(G)!0.62!(Ao)$);
    \coordinate (Bn) at ($(G)!0.62!(Bo)$);
    \coordinate (Cn) at ($(G)!0.62!(Co)$);
    \coordinate (Eo) at ($(Bo)!0.5!(Co)$);
    \coordinate (En) at ($(Bn)!0.5!(Cn)$);
    \node[font=\small] at (0,3.65)
      {mesh $\mathfrak T_{h,\epsilon}$};
    \draw[black!35,dashed,line width=0.65pt] (Ao)--(Bo)--(Co)--cycle;
    \coordinate (Qone) at (barycentric cs:An=2,Bn=1,Cn=1);
    \coordinate (Qtwo) at (barycentric cs:An=1,Bn=2,Cn=1);
    \coordinate (Qthree) at (barycentric cs:An=1,Bn=1,Cn=2);
    \filldraw[fill=blue!14,draw=blue!65!black,line width=0.7pt]
      (Qone)--(Qtwo)--(Qthree)--cycle;
    \foreach \j in {0,...,4}{
      \pgfmathsetmacro{\s}{\j/4}
      \draw[mesh] ($(An)!\s!(Bn)$)--($(An)!\s!(Cn)$);
      \draw[mesh] ($(Bn)!\s!(An)$)--($(Bn)!\s!(Cn)$);
      \draw[mesh] ($(Cn)!\s!(An)$)--($(Cn)!\s!(Bn)$);
    }
    \draw[blue!65!black,line width=0.8pt] (An)--(Bn)--(Cn)--cycle;
    \draw[red!70!black,line width=0.8pt] (Eo)--(En);
    \draw[red!70!black,line width=0.8pt]
      ($(Eo)+(-0.08,0)$)--($(Eo)+(0.08,0)$);
    \draw[red!70!black,line width=0.8pt]
      ($(En)+(-0.08,0)$)--($(En)+(0.08,0)$);
    \node[right=2pt,font=\scriptsize,text=red!70!black]
      at ($(Eo)!0.5!(En)$) {$\epsilon$};
    \foreach \kone in {0,...,4}{
      \pgfmathtruncatemacro{\rem}{4-\kone}
      \foreach \ktwo in {0,...,\rem}{
        \pgfmathtruncatemacro{\kthree}{4-\kone-\ktwo}
        \node[nodept] at
          (barycentric cs:An=\kone,Bn=\ktwo,Cn=\kthree) {};
      }
    }
    \node[font=\scriptsize,text=black!55] at (1.55,2.45)
      {$\mathcal P(G)$};
    \node[font=\scriptsize,text=blue!65!black] at (0,-0.55)
      {$\mathcal P_\epsilon$};
  \end{scope}
\end{tikzpicture}
\caption{The mesh map $F_\epsilon$ for $d=3$ and $J=4$.
The homothety sends $\mathcal T_J$ on $\mathcal P(G)$ to
$\mathfrak T_{h,\epsilon}$ on $\mathcal P_\epsilon$.  The highlighted 
elements correspond under $F_\epsilon$.}
\end{figure}

We enumerate the nodes as
$\mu^\ell\in\mathfrak M_{h,\epsilon}$,
$\ell=1,\ldots,|\mathfrak M_{h,\epsilon}|$.
For a map $v:\cP_\epsilon\to\bbR^d$, let
$P_{h,\epsilon}v=(v(\mu^\ell))_{\mu^\ell\in\mathfrak M_{h,\epsilon}}$
be its nodal restriction.  For a nodal field
$z_h:\mathfrak M_{h,\epsilon}\to\bbR^d$, let
$\mathcal I_{h,\epsilon}z_h$ denote its continuous piecewise affine
reconstruction on $\cP_\epsilon$.  When $\epsilon=0$, we write
$P_h:=P_{h,0}$ and $\mathcal I_h:=\mathcal I_{h,0}$.
For $\mu\in\cP_\epsilon$, choose an element
$S\in\mathfrak T_{h,\epsilon}$ containing
$\mu$, and denote its vertices by $\mu^{\ell_1},\ldots,\mu^{\ell_d}$.
The barycentric coordinates of $\mu$ in $S$ are the unique numbers
$\lambda_1(\mu),\ldots,\lambda_d(\mu)$ satisfying
\begin{align*}
\lambda_r(\mu)\ge0,\quad  \sum_{r=1}^d\lambda_r(\mu)=1,\quad\text{ and }\;
\mu=\sum_{r=1}^d\lambda_r(\mu)\mu^{\ell_r}.
\end{align*}  Then 
\begin{equation}\label{interpolant}
 (\mathcal I_{h,\epsilon}z_h)(\mu)
 =\sum_{r=1}^d\lambda_r(\mu)z_h(\mu^{\ell_r}).
\end{equation} 
For the full grid $\mathfrak M_{h,\epsilon}$, set
$ 
 \mathfrak M_{h,\epsilon}^\circ:=\mathfrak M_{h,\epsilon}\cap\cP_\epsilon^\circ,\;
 \partial\mathfrak M_{h,\epsilon}:=\mathfrak M_{h,\epsilon}\cap\partial\cP_\epsilon.$ 
For all sufficiently small $h$ and each
$y\in\partial\mathfrak M_{h,\epsilon}$, choose and fix a nearest
interior node $y^\circ\in\mathfrak M_{h,\epsilon}^\circ$, so that
$\norm{y^\circ-y}=\min_{x\in\mathfrak M_{h,\epsilon}^\circ}\norm{x-y}$.
For $z_h:\mathfrak M_{h,\epsilon}^\circ\to\bbR^d$, define its full grid
extension $\mathcal E_hz_h:\mathfrak M_{h,\epsilon}\to\bbR^d$ by 
\begin{equation}\label{gridextension}
 (\mathcal E_hz_h)(y):=
 \begin{cases}
 z_h(y),&y\in\mathfrak M_{h,\epsilon}^\circ,\\
 z_h(y^\circ),&y\in\partial\mathfrak M_{h,\epsilon}. 
 \end{cases}
\end{equation}

Now we present the fully discrete semi-Lagrangian scheme. Starting from $u_h^N= P_{h,\epsilon}g$, proceed backward for
$n=N-1,\ldots,0$.  
At every node $\mu^\ell\in\mathfrak M_{h,\epsilon}^\circ$, seek
$(\mu_\ell^+,u_\ell^n)\in\cP_\epsilon^\circ\times\bbR^d$ satisfying
\begin{subequations}\label{scheme}
\begin{empheq}[left=\empheqlbrace]{align}
 \mu_\ell^+
 &=\mu^\ell+\tau
 \mathcal A(\mu_\ell^+,u_\ell^n),
 \label{schemerho}\\
 (I-\tau\DeltaG)u_\ell^n
 &=\mathcal I_{h,\epsilon}u_h^{n+1} (\mu_\ell^+)
 -\tau\bm H(\mu_\ell^+,\gradG u_\ell^n),
 \label{schemeu}
\end{empheq}
\end{subequations} where $\mathcal A$ is given in \eqref{mathcalA}. 
The values $u_\ell^n$ define the $n$th time level nodal field on the full grid by
\[
 u_h^n:=\mathcal E_h
 \bigl((u_\ell^n)_{\mu^\ell\in\mathfrak M_{h,\epsilon}^\circ}\bigr).
\]
The extension supplies $\mathcal I_{h,\epsilon}u_h^n$ on elements
touching $\partial\cP_\epsilon$ at the next backward step. The equations above describe the full-grid implementation. 
For the convergence analysis, we restrict the scheme to the localized active node sets and  establish its local solvability there in
Theorem~\ref{conv}. 
Equation~\eqref{schemerho} is an implicit one-step discretization of the
density equation in the characteristic system~\eqref{mfg}, with
$\rho(t_n)=\mu^\ell$ and $\mu_\ell^+$ approximating $\rho(t_{n+1})$.
The arrival point $\mu_\ell^+$ is generally not a mesh node, and  thus the
next-layer field is evaluated there through the interpolated value
 $\mathcal I_{h,\epsilon}u_h^{n+1} (\mu_\ell^+)$. This characteristic update together with the off-grid evaluation constitutes the semi-Lagrangian step; see
Figure~\ref{schemefig} for $d=3$. 
Semi-Lagrangian methods have been widely used for MFGs on Euclidean domains; see, e.g. \cite{CarliniSilva2015,CamilliTangZhou2025,ChowdhuryErslandJakobsen2023}  and the references therein. Here the density characteristic evolves on the probability simplex, while the graph representing the individual state space remains fixed. 
\begin{figure}[!htbp]
\centering
\begin{tikzpicture}[x=1cm,y=1cm,>=Latex]
  \tikzset{
    mesh/.style={black!25,line width=0.45pt},
    nodept/.style={circle,fill=white,draw=black!55,line width=0.4pt,
      inner sep=0pt,minimum size=2.7pt}
  }

  \begin{scope}[xshift=-3.5cm]
    \coordinate (AoL) at (0,3.0);
    \coordinate (BoL) at (-2.1,0);
    \coordinate (CoL) at (2.1,0);
    \coordinate (AnL) at (barycentric cs:AoL=2,BoL=1,CoL=1);
    \coordinate (BnL) at (barycentric cs:AoL=1,BoL=2,CoL=1);
    \coordinate (CnL) at (barycentric cs:AoL=1,BoL=1,CoL=2);
    \node[font=\small] at (0,3.65) {time level $t_n$};
    \draw[black!65,line width=0.75pt]
      (AoL)--(BoL)--(CoL)--cycle;
    \foreach \j in {0,...,4}{
      \pgfmathsetmacro{\s}{\j/4}
      \draw[mesh] ($(AoL)!\s!(BoL)$)--($(AoL)!\s!(CoL)$);
      \draw[mesh] ($(BoL)!\s!(AoL)$)--($(BoL)!\s!(CoL)$);
      \draw[mesh] ($(CoL)!\s!(AoL)$)--($(CoL)!\s!(BoL)$);
    }
    \foreach \kone in {0,...,4}{
      \pgfmathtruncatemacro{\rem}{4-\kone}
      \foreach \ktwo in {0,...,\rem}{
        \pgfmathtruncatemacro{\kthree}{4-\kone-\ktwo}
        \node[nodept] at
          (barycentric cs:AoL=\kone,BoL=\ktwo,CoL=\kthree) {};
      }
    }
    % The departure density \mu^\ell is a node of the J=4 mesh.
    \coordinate (Start) at (AnL);
    \filldraw[fill=blue!65!black,draw=white,line width=0.7pt]
      (Start) circle (2.4pt);
    \node[anchor=south east,font=\scriptsize,text=blue!65!black,
      fill=white,inner sep=1pt]
      at ($(Start)+(-0.05,0.12)$) {$\mu^\ell$};
    \node[font=\scriptsize,text=black!55] at (1.55,2.45)
      {$\mathcal P(G)$};
  \end{scope}

  \begin{scope}[xshift=3.5cm]
    \coordinate (AoR) at (0,3.0);
    \coordinate (BoR) at (-2.1,0);
    \coordinate (CoR) at (2.1,0);
    \coordinate (AnR) at (barycentric cs:AoR=2,BoR=1,CoR=1);
    \coordinate (BnR) at (barycentric cs:AoR=1,BoR=2,CoR=1);
    \coordinate (CnR) at (barycentric cs:AoR=1,BoR=1,CoR=2);
    \node[font=\small] at (0,3.65) {time level $t_{n+1}$};
    \draw[black!65,line width=0.75pt]
      (AoR)--(BoR)--(CoR)--cycle;
    \foreach \j in {0,...,4}{
      \pgfmathsetmacro{\s}{\j/4}
      \draw[mesh] ($(AoR)!\s!(BoR)$)--($(AoR)!\s!(CoR)$);
      \draw[mesh] ($(BoR)!\s!(AoR)$)--($(BoR)!\s!(CoR)$);
      \draw[mesh] ($(CoR)!\s!(AoR)$)--($(CoR)!\s!(BoR)$);
    }
    \coordinate (Aone) at
      (barycentric cs:AnR=2,BnR=1,CnR=1);
    \coordinate (Atwo) at
      (barycentric cs:AnR=1,BnR=2,CnR=1);
    \coordinate (Athree) at
      (barycentric cs:AnR=1,BnR=1,CnR=2);
    \filldraw[fill=orange!25,draw=orange!70!black,line width=0.8pt]
      (Aone)--(Atwo)--(Athree)--cycle;
    \foreach \kone in {0,...,4}{
      \pgfmathtruncatemacro{\rem}{4-\kone}
      \foreach \ktwo in {0,...,\rem}{
        \pgfmathtruncatemacro{\kthree}{4-\kone-\ktwo}
        \node[nodept] at
          (barycentric cs:AoR=\kone,BoR=\ktwo,CoR=\kthree) {};
      }
    }
    \coordinate (Arrival) at
      (barycentric cs:Aone=2,Atwo=3,Athree=3);
    \filldraw[fill=red!70!black,draw=white,line width=0.7pt]
      (Arrival) circle (2.5pt);
    \node[above right=2pt,font=\scriptsize,text=red!70!black]
      at (Arrival) {$\mu_\ell^+$};
    \node[above=3pt,font=\scriptsize] at (Aone) {$a_1$};
    \node[below left=2pt,font=\scriptsize] at (Atwo) {$a_2$};
    \node[below right=2pt,font=\scriptsize] at (Athree) {$a_3$};
    \node[font=\scriptsize,text=black!55] at (1.55,2.45)
      {$\mathcal P(G)$};
  \end{scope}

  \draw[-{Latex[length=2mm]},black!70,line width=0.9pt]
    (Start)--(Arrival)
    node[midway,above=4pt,fill=white,inner sep=2pt,font=\scriptsize]
      {implicit characteristic step};
\end{tikzpicture}
\caption{Fully discrete semi-Lagrangian step for $d=3$.
Starting from an interior node
$\mu^\ell\in\mathfrak M_{h,\epsilon}^\circ$, the implicit update
\eqref{schemerho} determines an off-grid arrival point
$\mu_\ell^+\in\cP_\epsilon^\circ$. 
Let $\mathsf S_\ell^+\in\mathfrak T_{h,\epsilon}$ be the displayed triangle
with $\mu_\ell^+\in\mathsf S_\ell^+$, and let
$a_1,a_2,a_3\in\mathfrak M_{h,\epsilon}$ be its vertices.  Write
$\mu_\ell^+=\sum_{r=1}^3\lambda_r(\mu_\ell^+)a_r$, where
$\lambda_r(\mu_\ell^+)\ge0$ and
$\sum_{r=1}^3\lambda_r(\mu_\ell^+)=1$.  Then
$\mathcal I_{h,\epsilon}u_h^{n+1} (\mu_\ell^+)=\sum_{r=1}^3
\lambda_r(\mu_\ell^+)u_h^{n+1}(a_r)$, which is the value used in
\eqref{schemeu}.}
\label{schemefig}
\end{figure}

\begin{remark} 
The mixed-endpoint choice $(\mu_\ell^+,u_\ell^n)$ in \eqref{scheme} preserves Lasry--Lions monotonicity at the time-discrete level, provided that the next-layer field is monotone. 
Indeed, let $\mathcal V$ be a Lasry--Lions monotone next-layer field, and let $(\rho^{(a)},\phi^{(a)})$ be two solutions of the corresponding time-discrete one-step system with initial densities $\mu^{(a)}$, $a=1,2$. Graph integration by parts and the integrated form of \eqref{dll} give
$ 
 \ip{\phi^{(1)}-\phi^{(2)}}{\mu^{(1)}-\mu^{(2)}}
 \ge
 \ip{\mathcal V(\rho^{(1)})-\mathcal V(\rho^{(2)})}{\rho^{(1)}-\rho^{(2)}}
 \ge 0.
$ 
Related time-staggered
discretizations are used in numerical schemes for MFG systems
\cite{CuiDangGraphMFG2026,AchdouCapuzzoDolcetta2010,
AchdouCamilliCapuzzoDolcetta2013}.  
This time-discrete structure does not, however, guarantee Lasry--Lions
monotonicity of the fully discrete master field, since componentwise
piecewise affine reconstruction need not preserve this property.
Constructing a monotonicity-preserving fully
discrete scheme for the master equation on the probability simplex is
left for future work. 
\end{remark}

\subsection{Main results} 
This subsection states the main results for the numerical master field, the reconstructed MFG characteristics, and the feedback flux in the potential case. 
For $0\le t\le s\le T$ and $\mu\in\cP^\circ(G)$, define the
density flow associated with \eqref{representation} by 
\begin{equation}\label{flowmap}
\Phi_{t,s}(\mu):=\rho(s;t,\mu). 
\end{equation} 
The following theorem gives the convergence estimate for the semi-Lagrangian approximation.

\begin{theorem}\label{conv}
Let Assumptions~\ref{structure} and \ref{mono} hold, and let $u$ be the
classical solution  of \eqref{eq_intro} given by Proposition~\ref{regularity}. For every compact set $K\Subset\mathcal P^\circ(G)$, there exist 
constants $\epsilon_K,c_K,\tau_K>0$ such that the following hold.

Fix $0\le\epsilon<\epsilon_K$, and 
choose $N=N(h)$ and $\tau=T/N(h)\leq\tau_K $.  
If
\begin{equation}\label{coupling}
h\le c_K\tau,\quad 
 \frac{h\omega_K(h)}{\tau}\to0\;\;(\text{as }h\to0),
 \quad
 \frac{\tau^2}{h}\to0\;\;(\text{as }h\to0),
\end{equation}
then for all sufficiently small $h$, there exist unions of mesh
elements $\Omega_{n,h}$ such that 
$ 
K\subset\Omega_{n,h}\subset\mathcal P_{\epsilon_K}
\subset\mathcal P_\epsilon,\;0\le n\le N,
$ 
and for every $0\le n<N$ and 
$\mu^\ell\in\mathfrak M_{h,\epsilon}\cap\Omega_{n,h}$,
 there exists a unique pair $(\mu_\ell^+,u_\ell^n)$ solving \eqref{scheme} and  satisfying 
\[
 \norm{\mu_\ell^+-\Phi_{t_n,t_{n+1}}(\mu^\ell)}\le\frac{c_K\tau}{2},
 \qquad
 \norm{u_\ell^n-u(t_n,\mu^\ell)}\le1.
\]
Moreover, its piecewise affine reconstruction satisfies
\begin{equation}\label{error}
 \max_{0\le n\le N}
 \norm{\mathcal I_{h,\epsilon}u_h^n-u(t_n,\cdot)}_{L^\infty(K)}
 \le C_K\bigl(\tau+\frac{h\omega_K(h)}{\tau}\bigr), 
\end{equation} where $
 \omega_K(r):=
 \sup_{\substack{0\le t\le T,\ \mu,\nu\in\mathcal P_{\epsilon_K}\\
                  \norm{\mu-\nu}\le r}}
 \norm{D_\mu u(t,\mu)-D_\mu u(t,\nu)}_{
 \mathcal L(\bbR_0^d,\bbR^d)}. 
$ 
In particular, if $u\in\mathcal C^2$, then $\omega_K(h)\le Ch$ and under \eqref{coupling},  the method is first-order accurate in $\tau$. 
\end{theorem}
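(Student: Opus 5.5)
The proof is a backward induction on $n$ that carries three things at once: the interior localization, the local solvability of \eqref{scheme}, and the nodal error bound.

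\textbf{Localization.} Fix $K$. By Proposition~\ref{regularity} and the boundary degeneracy \eqref{boundary}, the exact flow $\Phi_{t,s}$ keeps each compact set of $\cP^\circ(G)$ inside a slightly larger compact set, uniformly for $0\le t\le s\le T$. Hence there is $\epsilon_K>0$ with $\Phi_{t,s}(K')\subset\cP_{2\epsilon_K}$ for a fixed compact neighbourhood $K'$ of $K$. On $\cP_{\epsilon_K}$ all coefficients, $u$, $D_\mu u$, and the resolvent $(I-\tau\DeltaG)^{-1}$ are uniformly controlled. I define active regions forward in time: $\Omega_{0,h}$ is the union of elements meeting $K$, and $\Omega_{n+1,h}$ is the union of elements meeting the $c_K\tau$-neighbourhood of $\Phi_{t_n,t_{n+1}}(\Omega_{n,h})$. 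The constraint $h\le c_K\tau$ means each step enlarges the region by $O(\tau)$ beyond the exact arrivals. Over $N$ steps this accumulates to $O(1)$, and the flow on $\cP_{\epsilon_K}$ also moves sets by $O(\tau)$ per step, so choosing $c_K$ small and $\epsilon_K$ appropriately keeps every $\Omega_{n,h}\subset\cP_{\epsilon_K}$. The key geometric fact is that any point within $c_K\tau/2$ of an exact arrival lies in an element of $\Omega_{n+1,h}$, so interpolation there uses only nodes of the active set.

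\textbf{Local solvability.} Assume the nodal error $e^{n+1}:=\max_{\Omega_{n+1,h}}\norm{u_h^{n+1}-u(t_{n+1},\cdot)}$ is small. For each node $\mu^\ell\in\Omega_{n,h}$, solve \eqref{schemeu} for $u_\ell^n$ with $\mu_\ell^+$ frozen; this is a contraction, because $(I-\tau\DeltaG)^{-1}$ is a contraction and $\tau\bm H$ is $O(\tau)$-Lipschitz. Then solve \eqref{schemerho} as a fixed point in $\mu_\ell^+$ on the ball of radius $c_K\tau/2$ around $\Phi_{t_n,t_{n+1}}(\mu^\ell)$.

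The Lipschitz constant of the composite map involves $\tau\,\mathrm{Lip}(\mathcal I_{h,\epsilon}u_h^{n+1})$. Splitting $u_h^{n+1}=P u(t_{n+1})+(u_h^{n+1}-Pu(t_{n+1}))$ and applying an inverse estimate gives
\[
\mathrm{Lip}\le C+Ce^{n+1}/h ,
\]
so the map is a contraction provided $\tau e^{n+1}/h$ is small. Self-mapping of the ball follows from a consistency estimate: the exact pair $(\Phi_{t_n,t_{n+1}}(\mu^\ell),u(t_n,\mu^\ell))$ satisfies \eqref{scheme} up to an $O(\tau^2)$ residual, by Taylor expansion along \eqref{representation}. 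It also needs $\tau e^{n+1}\ll\tau$. Together these give $\norm{\mu_\ell^+-\Phi}\le C(\tau^2+\tau e^{n+1})\le c_K\tau/2$, and the bound $\norm{u_\ell^n-u}\le1$ follows in the same way.

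\textbf{Error recursion and closure.} Subtract the exact one-step identity from \eqref{schemeu} and use that $(I-\tau\DeltaG)^{-1}$ is a contraction in $\ell^\infty$. Then
\[
e^n\le(1+C\tau)e^{n+1}+C\tau^2+C\,h\,\omega_K(h),
\]
where the last term is the piecewise affine interpolation error of a $\mathcal C^1$ function with modulus $\omega_K$ for $D_\mu u$. The term $\norm{D_\mu u}\,\norm{\mu_\ell^+-\Phi}$ is absorbed because $\norm{\mu_\ell^+-\Phi}\le C\tau(\tau+e^{n+1})$. A discrete Gronwall argument gives
\[
e^n\le C_K\bigl(\tau+h\omega_K(h)/\tau\bigr)=:\delta .
\]

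The closure step, which I expect to be the main obstacle, is verifying a posteriori the smallness assumed at each step. The requirement is $\tau\delta/h\to0$, i.e.\ $\tau^2/h+\omega_K(h)\to0$, which is exactly what \eqref{coupling} provides, together with $\delta\to0$. The induction therefore closes for small $h$. Bound \eqref{error} then follows by adding the interpolation error on $K\subset\Omega_{n,h}$. When $u\in\mathcal C^2$, \eqref{dulip} gives $\omega_K(h)\le Ch$, so $h\omega_K(h)/\tau\le Ch^2/\tau\le C\tau$, and the scheme is first order.
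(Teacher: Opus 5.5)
Your nodal solve (a value fixed point for frozen density, then a density fixed point on the ball of radius $c_K\tau/2$), the inverse-estimate bound $\Lambda\le C+Ch^{-1}E_{n+1}$, the recursion $E_n\le(1+C\tau)E_{n+1}+C(\tau^2+h\omega_K(h))$, and the closure through $\tau^2/h+\omega_K(h)\to0$ match Lemmas~\ref{local}, \ref{residual} and~\ref{stability} and the paper's backward induction.

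\textbf{The gap is in your localization.} You take $\Omega_{0,h}$ to be the elements meeting $K$, and $\Omega_{n+1,h}$ the elements meeting the $c_K\tau$-neighbourhood of $\Phi_{t_n,t_{n+1}}(\Omega_{n,h})$. This only gives that $\Omega_{n,h}$ lies within distance about $e^{LT}(h+2c_Kt_n)$ of $\Phi_{0,t_n}(K)$. Since $c_K$ must be small, this set in general does not contain $K$: the flow displaces $K$ by an amount governed by $\mathcal A$, not by $c_K$. The backward induction controls errors only at active nodes. So your construction yields neither the inclusion $K\subset\Omega_{n,h}$ asserted in the statement (and invoked in your last sentence) nor \eqref{error} at the levels $0<n<N$.

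The missing point is that the active region at $t_n$ must contain the images of $K$ launched from every earlier grid time, not only from $t_0$. A node of $K$ at level $k$ needs level $k+1$ near $\Phi_{t_k,t_{k+1}}(K)$, which in turn needs level $k+2$ near $\Phi_{t_k,t_{k+2}}(K)$, and so on. This is why the paper builds its tubes around $\Gamma(t)=\{\Phi_{r,t}(\mu):0\le r\le t,\ \mu\in K\}$, which contains $K$ and satisfies $\Phi_{t,s}(\Gamma(t))\subset\Gamma(s)$.

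Your recursion can be repaired as follows.
\begin{enumerate}
\item Let $\Omega_{n+1,h}$ consist of the elements meeting $K$ or the $c_K\tau$-neighbourhood of $\Phi_{t_n,t_{n+1}}(\Omega_{n,h})$.
\item Track $\rho_n:=\sup_{\mu\in\Omega_{n,h}}\operatorname{dist}(\mu,\Gamma(t_n))$. Then $\rho_{n+1}\le e^{L\tau}\rho_n+c_K\tau+h$, so $\rho_n\le e^{LT}(h+2c_KT)$. This is $O(c_K)$, not merely $O(1)$, so it is small for small $c_K$.
\item Here $L$ is a Lipschitz constant of $\mu\mapsto\mathcal A(\mu,u(t,\mu))$ on a fixed compact neighbourhood of $\bigcup_{0\le t\le T}\Gamma(t)$. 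Using this local constant requires a first-exit Gronwall argument, as in Step~2 of the proof of Proposition~\ref{tube}.
\end{enumerate}
With this change, your additive padding is a valid alternative to the paper's exponentially growing tubes $D(t)$ and $D_n^-$. Those tubes obtain the $c_K\tau$ margin from the surplus rate $\gamma$ rather than by explicit padding. The rest of your argument then goes through.
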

\begin{remark}\label{rate}
(i) The conditions in \eqref{coupling} can be attained under the
$\mathcal C^1$-regularity of $u$. Indeed, set
$
 N(h):=\lceil
 \frac{T}{h^{1/2}(h+\omega_K(h))^{1/4}}
 \rceil,\;
 \tau:=\frac{T}{N(h)},$ where $\lceil\cdot\rceil$ is the ceiling function. 
For all sufficiently small $h$, we have 
$ 
 \frac12 h^{1/2}(h+\omega_K(h))^{1/4}
 \le \tau\le
 h^{1/2}(h+\omega_K(h))^{1/4}.$ 
Consequently,
$ 
 \frac{h}{\tau}
 \le 2\frac{h^{1/2}}{(h+\omega_K(h))^{1/4}}
 \le 2h^{1/4}\to0,
$ 
so $h\le c_K\tau$ for all sufficiently small $h$. Moreover,
$ 
 \frac{\tau^2}{h}
 \le(h+\omega_K(h))^{1/2}\to0$ and $
\frac{h\omega_K(h)}{\tau}
 \le 2h^{1/2}(h+\omega_K(h))^{3/4}\to0$ as $h\to0.$

(ii) Under the assumptions of Theorem~\ref{conv}, suppose that
$D_\mu u$ is $\beta$-H\"older continuous in $\mu$ on
$[0,T]\times \mathcal P_{\epsilon_K}$
 for some $0<\beta\le1$.  Choosing
$\tau\asymp h^{(1+\beta)/2}$, with the proportionality constant
chosen so that $h\le c_K\tau$ when $\beta=1$, gives 
\[
 \max_{0\le n\le N}
 \norm{\mathcal I_{h,\epsilon}u_h^n-u(t_n,\cdot)}_{L^\infty(K)}
 \le C_Kh^{(1+\beta)/2}.
\]  
Under the additional $\mathcal C^2$ hypotheses of
Proposition~\ref{regularity}, $D_\mu u$ is locally Lipschitz in $\mu$,
and the case $\beta=1$ yields first-order accuracy. 
\end{remark}
Once the master field is computed,  it can be used as feedback to generate
MFG characteristics from prescribed initial population states. The
following corollary shows convergence of the corresponding
numerical density and value trajectories. 
\begin{corollary}\label{feedback}
Let the conditions of Theorem~\ref{conv} hold and let
$u_h=(u_h^n)_{n=0}^N$ be the solution given
in Theorem~\ref{conv}. For $\mu_0\in K$, define the numerical feedback trajectory
$(\widehat\rho_h^n,\widehat\phi_h^n)_{n=0}^N$ recursively.
Set $\widehat\rho_h^0=\mu_0$. Given $\widehat\rho_h^n$, set $
\widehat\phi_h^n
:=
(\mathcal I_{h,\epsilon}u_h^n)(\widehat\rho_h^n),$ 
and for $0\le n<N$, determine $\widehat\rho_h^{n+1}$ from 
\begin{equation}\label{feedbackstep}
 \widehat\rho_h^{n+1}
 =\widehat\rho_h^n+
 \tau\mathcal A\bigl(\widehat\rho_h^{n+1},
 \widehat\phi_h^n\bigr).
\end{equation} 
For all sufficiently small $h$, \eqref{feedbackstep} has a
unique solution
$\widehat\rho_h^{n+1}\in\Omega_{n+1,h}\Subset\cP^\circ(G)$ at every step. 
Moreover,
\begin{equation}\label{feedbackerror}
 \sup_{\mu_0\in K}\max_{0\le n\le N}
 \Bigl[
 \norm{\widehat\rho_h^n-\rho(t_n;0,\mu_0)}
 +\norm{\widehat\phi_h^n-\phi(t_n;0,\mu_0)}
 \Bigr]
 \le C_K\bigl(\tau+\frac{h\omega_K(h)}{\tau}\bigr).
\end{equation}
\end{corollary}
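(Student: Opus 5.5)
We compare the numerical feedback recursion with the exact characteristic sampled at the grid times. Fix $\mu_0\in K$ and write $\rho^n:=\rho(t_n;0,\mu_0)=\Phi_{0,t_n}(\mu_0)$ and $\phi^n:=u(t_n,\rho^n)$; the latter identity comes from Proposition~\ref{regularity}. Since $K$ is compact and the exact flow keeps $K$ inside a compact subset of $\cP^\circ(G)$ uniformly on $[0,T]$, all exact points $\rho^n$ lie in the region covered by the localized sets $\Omega_{n,h}$ of Theorem~\ref{conv}. The latter are built around the exact density characteristics and contain an $O(\tau)$ neighbourhood of the exact arrivals. The plan is an induction on $n$ with the hypothesis
\[
 e_n:=\norm{\widehat\rho_h^n-\rho^n}\le C_\ast E_h,\qquad E_h:=\tau+\frac{h\omega_K(h)}{\tau}.
\]
By \eqref{coupling}, $E_h\to0$, so for small $h$ this forces $\widehat\rho_h^n$ into $\Omega_{n,h}$ and at distance $\ll c_K\tau$ from the exact points. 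That is exactly the regime where the error bound \eqref{error} and the uniform coefficient bounds of the localization are available. One point must be checked: \eqref{error} is stated on $K$, whereas here the estimate is needed on the larger localized sets. I would use the fact that the proof of Theorem~\ref{conv} in Section~\ref{analysis} gives the nodal bound on all of $\Omega_{n,h}$. I expect this localization and bookkeeping to be the main obstacle: a constant $C_\ast$ must be chosen independently of $h$ while the neighbourhoods are of size $O(\tau)$. This needs $C_\ast E_h\le c_K\tau/4$ eventually, which I expect to extract from $E_h/\tau$ being bounded only via the separation structure in Proposition~\ref{tube}. If that fails, I would instead run the comparison against the localized numerical nodes directly.

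\textbf{Value error.} We have
\[
 \widehat\phi_h^n-\phi^n=\bigl[\mathcal I_{h,\epsilon}u_h^n-u(t_n,\cdot)\bigr](\widehat\rho_h^n)+\bigl[u(t_n,\widehat\rho_h^n)-u(t_n,\rho^n)\bigr].
\]
This gives $\norm{\widehat\phi_h^n-\phi^n}\le C_KE_h+L e_n$, where $L$ bounds $D_\mu u$ on $\cP_{\epsilon_K}$. So the value bound in \eqref{feedbackerror} follows from the density bound.

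\textbf{Solvability of \eqref{feedbackstep}.} Write the step as $\rho=\widehat\rho_h^n+\tau\mathcal A(\rho,\widehat\phi_h^n)$ and solve it by the contraction argument used for \eqref{schemerho} in Section~\ref{properties}. On a ball of radius $O(\tau)$ around $\rho^{n+1}$ inside $\Omega_{n+1,h}$, the map $\rho\mapsto\mathcal A(\rho,\widehat\phi_h^n)$ is $C^1$ with derivative bounded uniformly, because $\widehat\phi_h^n$ stays within distance $1$ of $u$ and $\bm B$ is $\mathcal C^1$ on compacts. Hence the map is a contraction for $\tau\le\tau_K$. It maps the ball into itself because $\rho^{n+1}$ satisfies the same implicit step up to a local truncation error $O(\tau^2)$. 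Uniqueness in $\Omega_{n+1,h}$ follows from the same contraction.

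\textbf{Density error recursion.} The exact solution satisfies
\[
 \rho^{n+1}=\rho^n+\tau\mathcal A\bigl(\rho^{n+1},u(t_n,\rho^n)\bigr)+r_n,\qquad \norm{r_n}\le C\tau^2,
\]
since $s\mapsto\rho(s)$ is $\mathcal C^2$ and $u$ is $\mathcal C^1$ in $t$; the time-derivative bound $\norm{\partial_t u}$ handles the mixed endpoint. Subtracting this from \eqref{feedbackstep} and using the Lipschitz bounds of $\mathcal A$ in both arguments gives
\[
 (1-C\tau)e_{n+1}\le e_n+C\tau\norm{\widehat\phi_h^n-\phi^n}+C\tau^2\le(1+C\tau)e_n+C\tau E_h+C\tau^2.
\]
Discrete Gronwall with $e_0=0$ yields $e_n\le Ce^{CT}E_h$, uniformly in $\mu_0\in K$ and $n\le N$. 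This closes the induction once $C_\ast:=Ce^{CT}$ and $h$ is small, and combined with the value estimate gives \eqref{feedbackerror}.
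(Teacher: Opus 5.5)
There is a genuine gap, and it sits exactly where you flag the ``main obstacle''. Your induction gives only $e_n\le C_\ast E_h$ with $E_h=\tau+h\omega_K(h)/\tau\ge\tau$. Here $C_\ast=Ce^{CT}$ is a Gronwall constant with no relation to the geometric constant $c_K$ of Proposition~\ref{tube}. So $C_\ast E_h\le c_K\tau/4$ cannot be obtained from \eqref{coupling}, nor from any bound on $E_h/\tau$.

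Your solvability step therefore fails as written. For $\mathcal R_n(q):=\widehat\rho_h^n+\tau\mathcal A(q,\widehat\phi_h^n)$ on an $O(\tau)$-ball centred at $\rho^{n+1}$, the centre defect is
\[
 \mathcal R_n(\rho^{n+1})-\rho^{n+1}
 =(\widehat\rho_h^n-\rho^n)
 +\tau\bigl[\mathcal A(\rho^{n+1},\widehat\phi_h^n)-\mathcal A\bigl(\rho^{n+1},u(t_n,\rho^n)\bigr)\bigr]-r_n .
\]
It contains $e_n$ itself, which may be as large as $C_\ast E_h\ge C_\ast\tau$. Hence the $O(\tau)$-ball need not be mapped into itself. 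The same belief, that $\Omega_{n,h}$ has only an $O(\tau)$ margin around the exact points, also leaves unjustified your claim that $e_n\le C_\ast E_h$ ``forces $\widehat\rho_h^n$ into $\Omega_{n,h}$''.

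Two ideas are missing.

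\textbf{A fixed margin.} The active regions have an $h$-independent margin around the exact characteristics. Since $D_n^-\subset\Omega_{n,h}$ and $r(t_n)\ge r_0$, the set $\{q\in\bbR_1^d:\operatorname{dist}(q,\Gamma(t_n))\le\sigma_K\}$ lies in $\Omega_{n,h}$, for instance with $\sigma_K=\frac12 r_0e^{-\gamma\tau_K/2}$. Also $\rho^n\in\Gamma(t_n)$. So $\widehat\rho_h^n\in\Omega_{n,h}$ only requires $C_\ast E_h<\sigma_K$, which holds for small $h$ because $E_h\to0$.

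\textbf{Recentring the contraction.} Centre the ball not at $\rho^{n+1}$ but at the exact one-step image of the numerical point, $\rho_\ast^{n+1}:=\Phi_{t_n,t_{n+1}}(\widehat\rho_h^n)$. Since $\widehat\rho_h^n\in\Omega_{n,h}$, the separation \eqref{separation} puts $\bbR_1^d\cap\overline B(\rho_\ast^{n+1},c_K\tau/2)$ inside $\Omega_{n+1,h}$. Lemma~\ref{residual} with $\mu=\widehat\rho_h^n$ then bounds the centre defect by
\[
 \tau L_{\mathcal A}\norm{(\mathcal I_{h,\epsilon}u_h^n)(\widehat\rho_h^n)-u(t_n,\widehat\rho_h^n)}+C\tau^2
 \le\tau L_{\mathcal A}C_KE_h+C\tau^2 .
\]
This is $o(\tau)$, hence at most $c_K\tau/4$ for small $h$, and $e_n$ no longer enters. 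Alternatively, you may keep the centre $\rho^{n+1}$ but use a ball of fixed radius of order $\sigma_K$; the defect $e_n+O(\tau E_h+\tau^2)\to0$ then fits.

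With these two changes, the rest of your argument goes through as written: the value-error splitting, the density recursion compared against $\rho^{n+1}$, and the discrete Gronwall estimate. This is how the paper closes the induction.
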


In the potential setting, the exact master feedback generates a
minimizing density--flux pair for the associated planning problem.
We next show that the numerical feedback is admissible and quantify
its deviation from optimality through an action estimate. For this
purpose, we impose the following potential structure assumptions, which are
taken from \cite[Section~4.1]{GangboMunozWuZhang2026}.

\begin{assumption}\label{potential}
Let
$\cL:[0,\infty)^d\times\bbS^{d\times d}\to\bbR\cup\{+\infty\}$ be jointly
convex and lower semicontinuous, with
$\cL\in\mathcal C^\infty((0,\infty)^d\times\bbS^{d\times d})$,
$\cL(\mu,0)=0$, and
$D_{mm}^2\cL(\mu,m)[r,r]>0$ for every 
$(\mu,m)\in(0,1)^d\times\bbS^{d\times d}$ and  every $r\in\bbS^{d\times d}\setminus\{0\}$.   For some $p_0>1$ and
$c_{\cL}>0$,
\[
 \cL(\mu,m)\ge c_{\cL}\bigl(\norm{m}_E^{p_0}-1\bigr),
 \qquad \forall\,(\mu,m)\in\cP(G)\times\bbS^{d\times d}.
\]
The functions
$\ip{m}{D_m\cL(\mu,m)}_E
+\ip{\mu}{\nabla_\mu\cL(\mu,m)}$ and
$\min\limits_{i=1,\ldots,d}\bigl[-D_{\mu_i}\cL(\mu,m)\bigr]$ are bounded from below
on $(0,\infty)^d\times\bbS^{d\times d}$.
Let
$
\cH(\mu,p):=\sup_{m\in\bbS^{d\times d}}
 \bigl\{\ip{m}{p}_E-\cL(\mu,m)\bigr\}.
$ 
Assume that
$\cH\in\mathcal C^\infty((0,1)^d\times\bbS^{d\times d})$.
Suppose that
$\cF\in\mathcal C^2([0,1]^d)\cap\mathcal C^\infty((0,1)^d)$ satisfies $
D^2\cF(\mu)[\eta,\eta]<0
\text{ for every }\mu\in(0,1)^d,
\eta\in\bbR_0^d\setminus\{0\},$ and that
$\cU_T\in\mathcal C^2([0,1]^d)\cap\mathcal C^\infty((0,1)^d)$ is convex on
$\cP(G)$.  The coefficients of \eqref{eq_intro} read 
\begin{equation}\label{potentialcoeff}
\bm H(\mu,p)=\nabla_\mu\cH(\mu,-p)+\nabla_\mu\cF(\mu),\quad
 \bm B(\mu,p)=-D_p\cH(\mu,-p),\quad
g(\mu)=\nabla_\mu\cU_T(\mu).
\end{equation}
Here $\nabla_\mu$ denotes the Euclidean gradient of the scalar
functions.  
\end{assumption}

We next recall the associated mean field planning problem. For $\mu\in\cP^\circ(G)$, an admissible pair consists of
$\rho\in\mathcal C([t,T];\cP(G))$ and
$m\in L^{p_0}((t,T);\bbS^{d\times d})$ satisfying
$\rho(t)=\mu$ and $\dot\rho+\divG m=0$ in the sense of distributions, where $p_0>1$ is the coercivity exponent in
Assumption~\ref{potential}. Define
\begin{equation}\label{action}
 \begin{split}
 \mathcal J_t(\rho,m)
 &:=
 \int_t^T\bigl[
 \cL\bigl(\rho(s),m(s)+\gradG\rho(s)\bigr)-\cF(\rho(s))
 \bigr]\,\mathrm ds+\cU_T(\rho(T)),\\
 \cU(t,\mu)&:=\inf\bigl\{\mathcal J_t(\rho,m):(\rho,m)
 \text{ is admissible with }\rho(t)=\mu\bigr\}.
 \end{split}
\end{equation}
By \cite[Theorem~1.3]{GangboMunozWuZhang2026}, we have 
$ 
 \nabla_{\mathcal W}\cU(t,\mu)=\gradG u(t,\mu).
$ 
Moreover, the pair $(\rho,m^*)$ is the unique minimizer of
\eqref{action}, where $\rho$ is the characteristic generated by $u$ and
\begin{align}\label{mstar}
 m^*(s)=D_p\cH\bigl(\rho(s),-\gradG u(s,\rho(s))\bigr)-\gradG\rho(s).
\end{align}

\begin{remark}
The logarithmic-mean model of Example~\ref{exam} is covered by
Assumption~\ref{potential}. Let
\[
 \cL_\theta(\mu,m)
 =\frac14\sum_{(i,j)\in E}
   \frac{m_{ij}^2}{\theta(\mu_i,\mu_j)},
 \qquad
 \cF(\mu)=-\frac{\alpha}{2}\norm{\mu}^2,
 \qquad
 \cU_T(\mu)=\norm{\mu-\mu_{\rm tar}}^2,
\]
where $\mu_{\rm tar}\in\cP^\circ(G)$ is fixed. The displayed formula
for $\cL_\theta$ is defined on the open simplex, and on the boundary, we
use its lower-semicontinuous extension. 
For $\mu\in\cP^\circ(G)$, the Legendre transform of
$\cL_\theta(\mu,\cdot)$ is $\cH_\theta(\mu,\cdot)$ defined in
\eqref{model}. Moreover,
$
\nabla_\mu\cF(\mu)=-\alpha\mu,\;
\nabla_\mu\cU_T(\mu)=2(\mu-\mu_{\rm tar}),$ 
and
$
-D_p\cH_\theta(\mu,-p)=\theta(\mu)p.
$ 
Therefore, \eqref{potentialcoeff} gives
\[
 \bm H(\mu,p)
 =\nabla_\mu\cH_\theta(\mu,-p)-\alpha\mu,
 \qquad
 \bm B(\mu,p)=\theta(\mu)p,
 \qquad
 g(\mu)=2(\mu-\mu_{\rm tar}).
\]
Thus these choices reproduce the Hamiltonian and transport
coefficients of Example~\ref{exam} and specify the corresponding
terminal map. 
\end{remark}

The following theorem gives a variational consequence of the feedback
convergence. The reconstructed density--flux pair is admissible for the
planning problem, and its action gap is controlled by the square of the
feedback error. Thus the numerical feedback is asymptotically optimal
at the level of the planning cost.

\begin{theorem}\label{actiontheorem}
Let the conditions of Corollary~\ref{feedback} and
Assumption~\ref{potential} hold.  For $\mu_0\in K$, let
$(\widehat\rho_h^n,\widehat\phi_h^n)_{n=0}^N$ be the numerical feedback
trajectory defined in Corollary~\ref{feedback}.  For $0\le n<N$, define
\begin{equation}\label{flux}
 m_h^{n+1}:=
 D_p\cH\bigl(\widehat\rho_h^{n+1},
 -\gradG\widehat\phi_h^n\bigr)
 -\gradG\widehat\rho_h^{n+1}.
\end{equation}
For $t\in[t_n,t_{n+1}]$, set
\begin{equation}\label{reconstruction}
 \overline\rho_h(t)
 :=\frac{t_{n+1}-t}{\tau}\widehat\rho_h^n
 +\frac{t-t_n}{\tau}\widehat\rho_h^{n+1},
 \qquad
 \overline m_h(t):=m_h^{n+1}.
\end{equation}
Then $(\overline\rho_h,\overline m_h)$ is admissible with
$\overline\rho_h(0)=\mu_0$, and
\begin{equation}\label{actionerror}
0\le
 \mathcal J_0(\overline\rho_h,\overline m_h)-\cU(0,\mu_0)
 \le C_K\bigl(\tau+\frac{h\omega_K(h)}{\tau}\bigr)^2,
 \qquad \mu_0\in K.
\end{equation}
Thus the control flux generated by the numerical master feedback is
asymptotically optimal, uniformly for $\mu_0\in K$.
\end{theorem}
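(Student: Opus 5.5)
Admissibility is checked first, since it is essentially algebraic. On $[t_n,t_{n+1}]$ the interpolant satisfies $\dot{\overline\rho}_h=(\widehat\rho_h^{n+1}-\widehat\rho_h^n)/\tau=\mathcal A(\widehat\rho_h^{n+1},\widehat\phi_h^n)$ by \eqref{feedbackstep}. In the potential case $\bm B(\mu,p)=-D_p\cH(\mu,-p)$, so
\[
\mathcal A(\widehat\rho_h^{n+1},\widehat\phi_h^n)=-\divG\bigl[D_p\cH(\widehat\rho_h^{n+1},-\gradG\widehat\phi_h^n)-\gradG\widehat\rho_h^{n+1}\bigr]=-\divG m_h^{n+1}.
\]
Hence $\dot{\overline\rho}_h+\divG\overline m_h=0$ holds piecewise, and therefore in distributions, because $\overline\rho_h$ is continuous and piecewise affine. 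Corollary~\ref{feedback} places $\widehat\rho_h^n$ in $\Omega_{n,h}\subset\cP_{\epsilon_K}$, so $\overline\rho_h$ takes values in $\cP(G)$ by convexity. The flux $\overline m_h$ is bounded, hence in $L^{p_0}$. The lower bound $0\le\mathcal J_0-\cU$ is then immediate from the definition of $\cU$ as an infimum.

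For the upper bound, I compare with the exact minimizer $(\rho,m^*)$ from \eqref{mstar}, with $\rho=\rho(\cdot;0,\mu_0)$ and $\mathcal J_0(\rho,m^*)=\cU(0,\mu_0)$. Set $\delta\rho=\overline\rho_h-\rho$ and $\delta m=\overline m_h-m^*$. Both pairs are admissible from $\mu_0$, so $\dot{\delta\rho}+\divG\delta m=0$ and $\delta\rho(0)=0$. Taylor expansion of the smooth integrand $\cL(\rho,m+\gradG\rho)-\cF(\rho)$ and of $\cU_T$ on the compact interior region $\cP_{\epsilon_K}\times\{\text{bounded fluxes}\}$ writes $\mathcal J_0(\overline\rho_h,\overline m_h)-\mathcal J_0(\rho,m^*)$ as a first variation plus a remainder bounded by $C_K\int_0^T(\norm{\delta\rho}^2+\norm{\delta m}_E^2)\,\mathrm ds+C_K\norm{\delta\rho(T)}^2$. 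The second derivatives are uniformly bounded there because $\rho$ and $\overline\rho_h$ stay in $\cP_{\epsilon_K}$.

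The first variation vanishes by the optimality structure, as for any admissible perturbation of a minimizer. Concretely, use the Legendre relation
\[
D_m\cL\bigl(\rho,m^*+\gradG\rho\bigr)=-\gradG u(s,\rho(s))=-\gradG\phi(s).
\]
Together with the $\phi$ equation in \eqref{mfg}, \eqref{ibp}, and $\phi(T)=\nabla\cU_T(\rho(T))$, integration by parts in time against $\dot{\delta\rho}=-\divG\delta m$ cancels the linear terms. Since all variations lie in $\bbR_0^d$, additive constants in $\phi$ do not matter. The computation must also account for the $\gradG\rho$ shift inside $\cL$, whose linear contribution pairs with $\DeltaG\phi$ and is absorbed by the $-\DeltaG\phi$ term of the adjoint equation.

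It remains to bound $\delta\rho$ and $\delta m$ by the feedback error $e_h:=C_K(\tau+h\omega_K(h)/\tau)$. For $\delta\rho$, Corollary~\ref{feedback} gives the nodal bound, and the $\mathcal C^1$ regularity of $\rho$ adds an $O(\tau)$ interpolation error within each interval, so $\sup_s\norm{\delta\rho(s)}\le Ce_h$. For $\delta m$ on $(t_n,t_{n+1}]$, I write $m^*(s)-m_h^{n+1}$ as the difference of $D_p\cH(\cdot,-\gradG\cdot)-\gradG(\cdot)$ evaluated at $(\rho(s),\phi(s))$ and at $(\widehat\rho_h^{n+1},\widehat\phi_h^n)$. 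Local Lipschitz continuity of $D_p\cH$, together with $\norm{\rho(s)-\rho(t_{n+1})}+\norm{\phi(s)-\phi(t_n)}\le C\tau$ and \eqref{feedbackerror}, then gives $\norm{\delta m(s)}_E\le Ce_h$. Inserting these bounds into the quadratic remainder yields \eqref{actionerror}, uniformly in $\mu_0\in K$.

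The main obstacle is the exact cancellation of the first variation. It requires care with the shifted argument $m+\gradG\rho$, the sign conventions in \eqref{potentialcoeff}, and the identification of $\bm H$ with $\nabla_\mu$ of the Lagrangian side. I would check it first by verifying $\nabla_\mu\cH(\mu,-p)=-\nabla_\mu\cL(\mu,m)$ at conjugate points and then carrying out the time integration by parts explicitly. A secondary issue is that $\overline m_h$ is only piecewise constant. This is harmless, because the first variation needs only $L^\infty$ perturbations and the expansion is done pointwise in $s$.
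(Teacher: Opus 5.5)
Your proposal is correct. It differs from the paper only in how the linear term is shown to vanish. The admissibility argument via the discrete continuity equation, the $L^\infty$ bounds on $\delta\rho$ and $\delta m$, and the second-order Taylor remainder all match the paper's proof.

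\emph{The paper's route.} It never computes the first variation. It sets $j_h(\lambda)=\mathcal J_0(\rho+\lambda\delta\rho_h,m^*+\lambda\delta m_h)$ for $\lambda\in[-1,1]$ and checks that every such pair is admissible. This needs the positivity margin $\delta_{K,T}$ and the bound $\norm{\delta\rho_h}_{L^\infty}\le\delta_{K,T}/2$ for small $h$, because $\rho-\delta\rho_h=2\rho-\overline\rho_h$ is not a convex combination of densities. It then gets $j_h'(0)=0$ from minimality of $(\rho,m^*)$ at an interior point of $[-1,1]$.

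\emph{Your route.} You verify stationarity directly. Set $q^*=m^*+\gradG\rho=D_p\cH(\rho,-\gradG\phi)$ and use the Legendre relations $D_m\cL(\rho,q^*)=-\gradG\phi$ and $\nabla_\mu\cL(\rho,q^*)=-\nabla_\mu\cH(\rho,-\gradG\phi)$. Combined with \eqref{ibp}, \eqref{lap}, $\dot{\delta\rho}=-\divG\delta m$ and $\dot\phi=\bm H(\rho,\gradG\phi)-\DeltaG\phi$, the first-variation integrand becomes $-\frac{\mathrm d}{\mathrm ds}\ip{\phi}{\delta\rho}$. The boundary term $-\ip{g(\rho(T))}{\delta\rho(T)}$ then cancels $\ip{\nabla\cU_T(\rho(T))}{\delta\rho(T)}$. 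I checked the signs, including the $\gradG\rho$ shift, and they work out.

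\emph{What each buys.} Your approach needs only the one-sided segment $\lambda\in[0,1]$, which stays in the convex set $\cP_{\epsilon_K}$ with no reflection or margin argument. It uses minimality of $(\rho,m^*)$ only to identify $\mathcal J_0(\rho,m^*)=\cU(0,\mu_0)$. The paper's approach avoids the Legendre and envelope bookkeeping, at the cost of the two-sided admissibility check.

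One caution about your wording. On a one-sided segment, minimality alone gives only $j'(0)\ge 0$, which is useless for an upper bound. So the remark ``as for any admissible perturbation of a minimizer'' is not by itself a proof. Your explicit computation, or the paper's two-sided variation, is what actually delivers $j'(0)=0$. Also make explicit that $\overline m_h$ is bounded uniformly in $h$, using $\norm{\widehat\phi_h^n}\le R-1$ from \eqref{feedbackerror}; the constant in your Taylor remainder depends on this.
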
 
When $D_\mu u$ is Lipschitz in $\mu$, one has
$\omega_K(h)\le Ch$. Hence, choosing $\tau\asymp h$ as in
Remark~\ref{rate}(ii), Theorem~\ref{actiontheorem} gives, under
Assumption~\ref{potential},
\[
0\le
\mathcal J_0(\overline\rho_h,\overline m_h)-\cU(0,\mu_0)
\le C_Kh^2,
\qquad \mu_0\in K.
\]

\section{A priori estimates and local solvability}\label{properties}

In this section, we first establish the interpolation and graph-resolvent estimates. Then we 
construct characteristic tubes and prove the local solvability of the fully discrete solution at each update.

\subsection{Interpolation and graph resolvent}

The interpolation bounds on the fixed mesh are collected in the
following lemma. For each mesh element
$S\in\mathfrak T_{h,\epsilon}$, derivatives of piecewise affine
interpolants are understood elementwise, and the $L^\infty(S)$ norms are
essential suprema over the relative interior of $S$. 

\begin{lemma}
\label{interpolation}
Fix $\epsilon\in[0,1/d)$.
There is a constant $C_I>0$, independent of $h$,  such that
\begin{align}
 \norm{\mathcal I_{h,\epsilon}z_h}_{L^\infty(\cP_\epsilon)}
 &\le\max_{\mu^\ell\in\mathfrak M_{h,\epsilon}}\norm{z_h(\mu^\ell)},
 \label{istab}\\
 \norm{v-\mathcal I_{h,\epsilon}P_{h,\epsilon}v}_{L^\infty(\cP_\epsilon)}
 &\le C_Ih\omega_{D_\mu v}(h),
 \label{ierror}\\
 \max_{S\in\mathfrak T_{h,\epsilon}}
 \norm{D_\mu(\mathcal I_{h,\epsilon}P_{h,\epsilon}v)}_{L^\infty(S)}
 &\le C_I\norm{D_\mu v}_{L^\infty(\cP_\epsilon)},
 \label{ilip}\\
 \max_{S\in\mathfrak T_{h,\epsilon}}
 \norm{D_\mu(\mathcal I_{h,\epsilon}P_{h,\epsilon}v)-D_\mu v}_{L^\infty(S)}
 &\le C_I\omega_{D_\mu v}(h),
 \label{igrad}\\
 \max_{S\in\mathfrak T_{h,\epsilon}}
 \norm{D_\mu(\mathcal I_{h,\epsilon}z_h)}_{L^\infty(S)}
 &\le C_Ih^{-1}\max_{\mu^\ell\in\mathfrak M_{h,\epsilon}}\norm{z_h(\mu^\ell)}.
 \label{inverse}
\end{align}
Here, $z_h:\mathfrak M_{h,\epsilon}\to\bbR^d$, $v:\cP_\epsilon\to \bbR^d$ is a 
$\mathcal C^1$ map, and
$
 \omega_{D_\mu v}(r):=
\sup_{\substack{x,y\in\cP_\epsilon\\
                  \norm{x-y}\le r}}
 \norm{D_\mu v(x)-D_\mu v(y)}_{\mathcal L(\bbR_0^d,\bbR^d)}.
$ 
\end{lemma}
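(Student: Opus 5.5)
All five estimates are elementwise statements about barycentric interpolation on one element $S\in\mathfrak T_{h,\epsilon}$. The plan is to prove each on a single element, with constants depending only on the shape regularity of $\mathfrak T_{h,\epsilon}$. Since $F_\epsilon$ is a homothety with ratio $1-d\epsilon$, every element is a scaled copy of an element of $\mathcal T_J$. Moreover $\mathcal T_J$ contains only finitely many congruence classes of simplices, up to translation and scaling by $J^{-1}$. Hence all constants are uniform in $h$.

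\emph{Stability and approximation.} For \eqref{istab}, note that by \eqref{interpolant} the value $(\mathcal I_{h,\epsilon}z_h)(\mu)$ is a convex combination of nodal values, since $\lambda_r\ge0$ and $\sum_r\lambda_r=1$. The triangle inequality then gives the bound with constant $1$.

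For \eqref{ierror}, fix $\mu\in S$ with vertices $\mu^{\ell_r}$. Because $\sum_r\lambda_r(\mu)(\mu^{\ell_r}-\mu)=0$, we can write
\[
v(\mu)-\mathcal I_{h,\epsilon}P_{h,\epsilon}v(\mu)
=\sum_r\lambda_r(\mu)\bigl[v(\mu)-v(\mu^{\ell_r})+D_\mu v(\mu)[\mu^{\ell_r}-\mu]\bigr].
\]
Each bracket equals $-\int_0^1\bigl(D_\mu v(\mu+s(\mu^{\ell_r}-\mu))-D_\mu v(\mu)\bigr)[\mu^{\ell_r}-\mu]\,\mathrm ds$. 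The increments $\mu^{\ell_r}-\mu$ lie in $\bbR_0^d$, and the segment stays in the convex set $S\subset\cP_\epsilon$. Its norm is therefore at most $h\,\omega_{D_\mu v}(h)$, which gives \eqref{ierror}.

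\emph{Gradient estimates.} On $S$ the interpolant $w=\mathcal I_{h,\epsilon}z_h$ is affine. Its derivative $D_\mu w$ on $\bbR_0^d$ is determined by the edge differences $w(\mu^{\ell_r})-w(\mu^{\ell_1})$, $r=2,\ldots,d$. The edge vectors $e_r=\mu^{\ell_r}-\mu^{\ell_1}$ form a basis of $\bbR_0^d$. Shape regularity gives $\norm{e_r}\le h$, and the dual basis has norm at most $C/h$; this follows from scaling a fixed reference simplex.

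Writing $\eta=\sum_r c_r e_r$ with $|c_r|\le C\norm{\eta}/h$, we get
\[
\norm{D_\mu w[\eta]}\le\sum_r|c_r|\,\norm{z_h(\mu^{\ell_r})-z_h(\mu^{\ell_1})}\le Ch^{-1}\max\norm{z_h}\,\norm{\eta},
\]
which is \eqref{inverse}.

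For \eqref{ilip} and \eqref{igrad}, take $z_h=P_{h,\epsilon}v$. The mean value theorem along the edge gives
\[
v(\mu^{\ell_r})-v(\mu^{\ell_1})=D_\mu v(x)[e_r]+R_r,
\qquad \norm{R_r}\le h\,\omega_{D_\mu v}(h),
\]
for any $x\in S$. Hence $D_\mu w[e_r]-D_\mu v(x)[e_r]=R_r$. Expanding a general $\eta$ in the basis $\{e_r\}$ with the coefficient bound above yields \eqref{igrad}: the factor $h$ in $R_r$ cancels the $h^{-1}$ from the dual basis. Similarly, $\norm{D_\mu w[e_r]}\le\norm{D_\mu v}_{L^\infty}h$ gives \eqref{ilip}.

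\emph{Main obstacle.} The only genuinely non-routine point is the uniform bound on the dual basis, i.e.\ a uniform lower bound on the smallest singular value of the edge matrix of $S$ relative to $h$. I would argue this through the reference mesh: $\mathcal T_J$ is obtained by scaling a fixed triangulation pattern (Freudenthal/Kuhn type, as in \cite[Section~4.1]{CuiDangMou2026}), so only finitely many reference simplex shapes occur. The homothety $F_\epsilon$ preserves these shapes. A compactness/finiteness argument then gives the constant $C_I$, independent of $h$ and $\epsilon$.
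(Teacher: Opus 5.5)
Your proposal is correct and follows essentially the same route as the paper: a convex-combination bound for \eqref{istab}, Taylor expansion with integral remainder using $\sum_r\lambda_r(\mu)(\mu^{\ell_r}-\mu)=0$ for \eqref{ierror}, and an elementwise edge-difference representation of $D_\mu(\mathcal I_{h,\epsilon}z_h)$ for the gradient bounds. Your dual-basis coefficients $c_r$ are exactly the paper's $D_\mu\lambda_r[\eta]$, so your uniform bound $|c_r|\le C\norm{\eta}/h$ is the paper's height estimate $\norm{D_\mu\lambda_j}=H_j^{-1}\le C_Ih^{-1}$. The paper obtains it from shape regularity and quasi-uniformity, whereas you obtain it from the finitely many simplex shapes in $\mathcal T_J$.
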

\begin{proof}
\textit{Proof of \eqref{istab}.}
Fix $x\in\cP_\epsilon$ and let $S\in\mathfrak T_{h,\epsilon}$ contain $x$.
Denote its vertices, which belong to $\mathfrak M_{h,\epsilon}$, by
$a_1,\ldots,a_d$, and their barycentric coordinates by
$\lambda_1,\ldots,\lambda_d$.  Then
\begin{equation}\label{bary}
 (\mathcal I_{h,\epsilon}z_h)(x)=\sum_{j=1}^d\lambda_j(x)z_h(a_j),
 \qquad \lambda_j(x)\ge0,
 \qquad \sum_{j=1}^d\lambda_j(x)=1.
\end{equation}
Using the triangle inequality in \eqref{bary} proves
\eqref{istab}.

\textit{Proof of \eqref{ierror}.}
By the fundamental theorem of calculus, we have
\begin{align*}
 v(a_j)-v(x)-D_\mu v(x)(a_j-x)
 =\int_0^1
 [D_\mu v(x+s(a_j-x))-D_\mu v(x)](a_j-x)\,\mathrm ds.
\end{align*}
Multiplying this identity by $\lambda_j(x)$ and summing over $j$ leads to
\begin{align*}
 (\mathcal I_{h,\epsilon}P_{h,\epsilon}v)(x)-v(x)
 &=\sum_{j=1}^d\lambda_j(x)\bigl(v(a_j)-v(x)\bigr)=D_\mu v(x)\sum_{j=1}^d\lambda_j(x)(a_j-x)\\
 &\quad+\sum_{j=1}^d\lambda_j(x)\int_0^1
 \bigl[D_\mu v(x+s(a_j-x))-D_\mu v(x)\bigr]
 (a_j-x)\,\mathrm ds.
\end{align*}
The first term on the right-hand side vanishes because
$\sum_j\lambda_j(x)a_j=x$ and $\sum_j\lambda_j(x)=1$.
The segment joining $x$ to $a_j$ lies in
$S\subset\cP_\epsilon$ and thus
$\norm{a_j-x}\le\operatorname{diam}(S)\le h$, which leads to 
$ 
 \norm{(\mathcal I_{h,\epsilon}P_{h,\epsilon}v)(x)-v(x)}
 \le\sum_{j=1}^d\lambda_j(x)h\omega_{D_\mu v}(h)
 =h\omega_{D_\mu v}(h).
$ 
Taking the supremum over $x\in\cP_\epsilon$ proves
\eqref{ierror}.

\textit{Proof of \eqref{ilip}.}
Choose $a_1$ as a reference vertex. 
Since
$\lambda_1=1-\sum_{j=2}^d\lambda_j$, the interpolant can be written as
$ 
 (\mathcal I_{h,\epsilon}z_h)(x)
 =z_h(a_1)+\sum_{j=2}^d\lambda_j(x)
 \bigl(z_h(a_j)-z_h(a_1)\bigr).
$ 
Hence, for every tangent direction $\xi\in\bbR_0^d$,
\begin{equation}\label{derivative}
 D_\mu(\mathcal I_{h,\epsilon}z_h)|_S[\xi]
 =\sum_{j=2}^d\bigl(z_h(a_j)-z_h(a_1)\bigr)
 D_\mu\lambda_j[\xi], 
\end{equation}
where $D_\mu\lambda_j\in\mathcal L(\bbR_0^d,\bbR)$.  
Let $F_j$ be the face of $S$ opposite to $a_j$, and set
$H_j:=\operatorname{dist}(a_j,\operatorname{aff}(F_j))$,
where $\operatorname{aff}(F_j)$ denotes the affine hull of $F_j.$ 
Since $\lambda_j(a_j)=1$ and
$\lambda_j=0$ on $F_j$,
$ 
\norm{D_\mu\lambda_j}_{\mathcal L(\bbR_0^d,\bbR)}=H_j^{-1}.
$ 
By the shape regularity and quasi-uniformity of the triangulations
$\mathfrak T_{h,\epsilon}$ constructed in
Section~\ref{sec4-1}, 
there is $c>0$ independent of $h$, $S$, and $j$ such that 
$ 
 H_j\ge c\operatorname{diam}(S),\;
\operatorname{diam}(S)\ge ch$. This implies 
$\norm{D_\mu\lambda_j}_{\mathcal L(\bbR_0^d,\bbR)}
 \le \frac{1}{c\operatorname{diam}(S)}\le C_Ih^{-1}.
$
Inserting 
\begin{align*} 
 \norm{v(a_j)-v(a_1)}
 \le\int_0^1
 \norm{D_\mu v\bigl(a_1+s(a_j-a_1)\bigr)}
 \norm{a_j-a_1}\,\mathrm ds
 \le h\norm{D_\mu v}_{L^\infty(S)},\;\;j=2,\ldots,d,
\end{align*} and $z_h=P_{h,\epsilon}v$ 
into \eqref{derivative} implies 
\begin{align*}
\norm{D_\mu(\mathcal I_{h,\epsilon}P_{h,\epsilon}v)|_S}
 &=\sup_{\norm{\xi}\le1}
 \norm{\sum_{j=2}^d
 \bigl(v(a_j)-v(a_1)\bigr)D_\mu\lambda_j[\xi]}\\&\le \sum_{j=2}^d
 \norm{v(a_j)-v(a_1)}
 \norm{D_\mu\lambda_j}\le C_I\norm{D_\mu v}_{L^\infty(S)}.
\end{align*} 
Taking the maximum of the preceding elementwise estimate over
$S\in\mathfrak T_{h,\epsilon}$ proves \eqref{ilip}.

\textit{Proof of \eqref{igrad}.}
For $x\in S$,  since
$ 
 x=a_1+\sum_{j=2}^d\lambda_j(x)(a_j-a_1),
$ 
differentiation in a direction $\xi\in\bbR_0^d$ gives 
$ 
 \sum_{j=2}^d(a_j-a_1)D_\mu\lambda_j[\xi]=\xi.
$ 
Applying $D_\mu v(x)$ to this identity results in
$ 
 D_\mu v(x)[\xi]
 =\sum_{j=2}^dD_\mu v(x)[a_j-a_1]D_\mu\lambda_j[\xi].
$ 
Subtracting this equality from \eqref{derivative} with $z_h=P_{h,\epsilon}v$ gives
\begin{align*}
 \bigl(D_\mu(\mathcal I_{h,\epsilon}P_{h,\epsilon}v)|_S-D_\mu v(x)\bigr)[\xi]=\sum_{j=2}^d
 \bigl[v(a_j)-v(a_1)-D_\mu v(x)[a_j-a_1]\bigr]
 D_\mu\lambda_j[\xi].
\end{align*}
\iffalse
By taking the supremum over $\norm{\xi}\le1$, we obtain 
\begin{align*}
\norm{D_\mu(\mathcal I_{h,\epsilon}P_{h,\epsilon}v)|_S-D_\mu v(x)}
 &\le\sum_{j=2}^d
 \norm{v(a_j)-v(a_1)-D_\mu v(x)[a_j-a_1]}
 \norm{D_\mu\lambda_j}. 
 \end{align*} \fi
By the fundamental theorem of calculus, 
we have 
$ 
 \norm{v(a_j)-v(a_1)-D_\mu v(x)[a_j-a_1]}
 \le h\,\omega_{D_\mu v}(h).
$  Taking the supremum over $\norm{\xi}\le1$ and using $\norm{D_\mu\lambda_j}\le C_Ih^{-1}$ in the previous display therefore yields
$ 
\norm{D_\mu(\mathcal I_{h,\epsilon}P_{h,\epsilon}v)|_S-D_\mu v(x)}
 \le C_I\omega_{D_\mu v}(h),
$ 
which proves \eqref{igrad}. 

\textit{Proof of \eqref{inverse}.}
For arbitrary $z_h$, put
$M_h:=\max_{\mu^\ell\in\mathfrak M_{h,\epsilon}}\norm{z_h(\mu^\ell)}$.  Then by \eqref{derivative},
\begin{align*}
 \norm{D_\mu(\mathcal I_{h,\epsilon}z_h)|_S}
 \le\sum_{j=2}^d
 \norm{z_h(a_j)-z_h(a_1)}
 \norm{D_\mu\lambda_j}\le 2M_h\sum_{j=2}^d\norm{D_\mu\lambda_j}
 \le C_Ih^{-1}M_h.
\end{align*}
Taking the maximum over $S\in\mathfrak T_{h,\epsilon}$ proves
\eqref{inverse}.
\end{proof}

The following lemma shows elementary resolvent properties of the graph
Laplacian.
\begin{lemma}
\label{resolvent}
For every $\tau>0$, $S_\tau=(I-\tau\DeltaG)^{-1}$ exists and satisfies $S_\tau\bm1=\bm1$, 
\begin{equation}\label{resolventid}
 \norm{S_\tau z}_{\ell^\infty}\le\norm{z}_{\ell^\infty},
 \qquad \norm{S_\tau z}\le\norm{z},
 \qquad \ip{S_\tau z}{\bm1}=\ip{z}{\bm1},\quad z\in\mathbb R^d.
\end{equation}
\end{lemma}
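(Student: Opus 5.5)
We prove Lemma~\ref{resolvent} using only the structure of $\DeltaG$ recorded in \eqref{lap}: it is symmetric, negative semidefinite, has zero row sums, and has nonnegative off-diagonal entries $\omega_{ij}$.

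\emph{Invertibility and $S_\tau\bm1=\bm1$.} By \eqref{lap}, for every $v\in\bbR^d$,
\[
\ip{(I-\tau\DeltaG)v}{v}=\norm{v}^2+\tau\norm{\gradG v}_E^2\ge\norm{v}^2 .
\]
Hence $I-\tau\DeltaG$ is injective, and therefore invertible on $\bbR^d$. Since $\DeltaG\bm1=0$, we have $(I-\tau\DeltaG)\bm1=\bm1$, which gives $S_\tau\bm1=\bm1$.

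\emph{Euclidean contraction.} Set $w=S_\tau z$. The identity above gives $\norm{w}^2\le\ip{z}{w}\le\norm{z}\norm{w}$, so $\norm{w}\le\norm{z}$.

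\emph{$\ell^\infty$ contraction (discrete maximum principle).} Again let $w=S_\tau z$, and pick an index $i$ with $w_i=\max_k w_k$. Because $\omega_{ij}\ge0$ and $w_j\le w_i$, we get $(\DeltaG w)_i=\sum_{j\sim i}\omega_{ij}(w_j-w_i)\le0$. Therefore
\[
w_i=z_i+\tau(\DeltaG w)_i\le z_i\le\norm{z}_{\ell^\infty}.
\]
The same argument at a minimizing index gives $\min_k w_k\ge-\norm{z}_{\ell^\infty}$. Combining the two yields $\norm{w}_{\ell^\infty}\le\norm{z}_{\ell^\infty}$.

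\emph{Mass conservation.} Since $\DeltaG$ is symmetric and $\DeltaG\bm1=0$, we have $\ip{\DeltaG w}{\bm1}=\ip{w}{\DeltaG\bm1}=0$. Taking the inner product of $z=w-\tau\DeltaG w$ with $\bm1$ then gives $\ip{z}{\bm1}=\ip{w}{\bm1}$. Equivalently, one may apply the self-adjointness of $S_\tau$ together with $S_\tau\bm1=\bm1$.

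No step here is a real obstacle. The only point requiring care is that the maximum principle uses the sign condition $\omega_{ij}\ge0$, which holds by the standing assumptions on the graph.
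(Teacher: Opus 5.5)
Your proof is correct and follows essentially the same route as the paper: invertibility and the Euclidean bound come from positive semidefiniteness of $-\DeltaG$ (you use the energy identity $\ip{(I-\tau\DeltaG)v}{v}=\norm{v}^2+\tau\norm{\gradG v}_E^2$, the paper uses that every eigenvalue of $I-\tau\DeltaG$ is at least one), the $\ell^\infty$ bound comes from the discrete maximum principle at an extremal index, and mass conservation comes from symmetry of $\DeltaG$ together with $\DeltaG\bm1=0$. The only cosmetic difference is that you handle the minimizing index directly, whereas the paper reduces it to the maximum case via $-S_\tau z=S_\tau(-z)$.
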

\begin{proof}
By \eqref{lap}, $-\DeltaG$ is symmetric positive semidefinite.
Thus every eigenvalue of $I-\tau\DeltaG$ is at least one, which proves
invertibility and the $\ell^2$ estimate in
\eqref{resolventid}.  If $y=S_\tau z$, then $(I-\tau\DeltaG)y=z$.  Taking the inner product with $\bm1$ and using the symmetry of $\DeltaG$ together with $\DeltaG\bm1=0$ gives
$\ip{y}{\bm1}=\ip{z}{\bm1}$, proving the last identity in \eqref{resolventid}. 
Let $y=S_\tau z$.  If $y_k=\max_i y_i$, then
$(\DeltaG y)_k=\sum_{j\sim k}\omega_{kj}(y_j-y_k)\le0$, and hence
$y_k\le y_k-\tau(\DeltaG y)_k=z_k\le\max_i z_i$.
Since $S_\tau$ is linear, $-y=S_\tau(-z)$. Applying the preceding maximum
estimate to $-y$ gives
\[
 -\min_i y_i=\max_i(-y_i)\le\max_i(-z_i)=-\min_i z_i,
\]
and therefore $\min_i y_i\ge\min_i z_i$. Consequently,
$ 
 \max_i y_i\le\max_i z_i\le\norm{z}_{\ell^\infty}$ and $
 -\min_i y_i\le-\min_i z_i\le\norm{z}_{\ell^\infty},$ 
which proves the $\ell^\infty$ estimate in \eqref{resolventid}. Finally,
$\DeltaG\bm1=0$, so
$S_\tau\bm1=\bm1$.
\end{proof}

\subsection{Characteristic localization and local solvability}

The following proposition provides the geometric margin needed by the nonlinear update. Starting from a compact family of exact characteristics, we construct tubes whose radii grow faster than the separation of nearby trajectories. Taking whole mesh elements around smaller tubes gives the active submeshes.

\begin{proposition}\label{tube}
Let Assumptions~\ref{structure} and \ref{mono} hold, and let $K\Subset\cP^\circ(G)$ be compact.  There are a compact set
$\mathcal Q_K\Subset\cP^\circ(G)$ and constants
$\epsilon_K,c_K,\tau_K>0$ such that, if
$0\le\epsilon<\epsilon_K$, $0<\tau\le\tau_K$, and $h\le c_K\tau$,
there exists a family $\bigl(\Omega_{n,h}\bigr)_{n=0}^N$ of sets, each
formed as a union of elements of the triangulation
$\mathfrak T_{h,\epsilon}$, such that
\begin{equation}\label{separation}
 K\subset\Omega_{n,h}\subset\mathcal Q_K,\qquad
 \operatorname{dist}\big(
 \Phi_{t_n,t_{n+1}}(\Omega_{n,h}),
 \bbR_1^d\setminus\Omega_{n+1,h}\big)
 \ge c_K\tau
\end{equation}
for $0\le n<N$. 
\end{proposition}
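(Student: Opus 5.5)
We construct the tubes from the exact flow $\Phi_{t,s}$ of \eqref{flowmap}, using that $u\in\mathcal C^1$ makes the feedback velocity $\mu\mapsto\mathcal A(\mu,u(s,\mu))$ locally Lipschitz on $\cP^\circ(G)$, uniformly in $s\in[0,T]$.

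First we fix the compact hull. By Proposition~\ref{regularity}, each characteristic $\rho(\cdot;0,\mu)$ with $\mu\in K$ stays in $\cP^\circ(G)$ on $[0,T]$. By continuity of $(s,\mu)\mapsto\Phi_{0,s}(\mu)$ and compactness, the set $\mathcal K:=\{\Phi_{0,s}(\mu):s\in[0,T],\mu\in K\}$ is compact in $\cP^\circ(G)$. For the initial data, however, $K$ must be contained in every $\Omega_{n,h}$, not merely transported. We therefore use instead the set of all arrivals $\mathcal K':=\{\Phi_{t,s}(\mu):0\le t\le s\le T,\ \mu\in K\}$, which is compact by the same argument. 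Choose $\delta>0$ with $\mathcal Q_K:=\{\mu\in\cP(G):\operatorname{dist}(\mu,\mathcal K')\le 4\delta\}\Subset\cP^\circ(G)$, and let $\epsilon_K>0$ satisfy $\mathcal Q_K\subset\cP^\circ_{\epsilon_K}$, so that the meshes $\mathfrak T_{h,\epsilon}$ with $\epsilon<\epsilon_K$ cover $\mathcal Q_K$. Let $L$ be a Lipschitz constant of the feedback velocity on $\mathcal Q_K$. By Gronwall's inequality, as long as the trajectories stay in $\mathcal Q_K$, we have $\norm{\Phi_{t,s}(\mu)-\Phi_{t,s}(\nu)}\le e^{L(s-t)}\norm{\mu-\nu}$.

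Next we define growing tubes. Set $r(t):=\delta_0e^{2L t}$ with $\delta_0$ chosen so that $r(T)\le\delta$, and define
\[
 U_t:=\{\mu\in\bbR_1^d:\operatorname{dist}(\mu,\Phi_{0,t}(\mathcal K_0))\le r(t)\},
\]
where $\mathcal K_0$ is a suitable compact set. To make $K\subset U_t$ for all $t$, a natural choice is the union over starting times, $U_t:=\bigcup_{t'\le t}\{\operatorname{dist}(\cdot,\Phi_{t',t}(K))\le r(t)\}$. Then $\Phi_{t_n,t_{n+1}}(U_{t_n})\subset U_{t_{n+1}}$, with margin obtained as follows. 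A point at distance at most $r(t_n)$ from $\Phi_{t',t_n}(K)$ is mapped to distance at most $e^{L\tau}r(t_n)$ from $\Phi_{t',t_{n+1}}(K)$. Moreover $r(t_{n+1})-e^{L\tau}r(t_n)=r(t_n)(e^{2L\tau}-e^{L\tau})\ge L\delta_0\tau$. The image of $U_{t_n}$ thus sits at distance at least $L\delta_0\tau$ inside $U_{t_{n+1}}$, and $K\subset U_{t}$ because we may take $t'=t$. If $L=0$, we replace $L$ by $\max(L,1)$.

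Finally we discretize. Let $\Omega_{n,h}$ be the union of all elements $S\in\mathfrak T_{h,\epsilon}$ meeting $U_{t_n}$. Then $U_{t_n}\subset\Omega_{n,h}\subset U_{t_n}+B_h\subset\mathcal Q_K$. This is the step requiring care: the dilation by $h$ of $\Omega_{n,h}$ must not destroy the margin, since $\Phi$ applied to $\Omega_{n,h}$ reaches within $e^{L\tau}(r(t_n)+h)$ of the trajectories. The margin against the complement of $U_{t_{n+1}}\subset\Omega_{n+1,h}$ is therefore at least $L\delta_0\tau-e^{L\tau}h\ge L\delta_0\tau-2h$. Choosing $c_K:=L\delta_0/4$ and $\tau_K$ small, the condition $h\le c_K\tau$ gives a margin of at least $\tfrac12L\delta_0\tau\ge c_K\tau$, which is \eqref{separation}. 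The main point to check is that all Gronwall estimates are applied only to trajectories remaining in $\mathcal Q_K$; this holds because $r(T)+h\le 2\delta$ keeps everything within distance $4\delta$ of $\mathcal K'$.
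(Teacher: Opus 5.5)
Your proof is correct and follows essentially the paper's route. Your $U_t$ is the paper's tube $D(t)=\{\operatorname{dist}(\cdot,\Gamma(t))\le r(t)\}$ with the growth rate $L+\gamma$ taken as $2L$, and the Gronwall margin $r(t_{n+1})-e^{L\tau}r(t_n)\ge L\delta_0\tau$ absorbs the $O(h)$ mesh enlargement once $h\le c_K\tau$.

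The only difference is in how $\Omega_{n,h}$ is built. The paper covers a slightly shrunken tube $D_n^-$, so that $\Omega_{n,h}\subset D(t_n)$ and its invariance estimate applies directly. You cover the full tube and instead carry the extra $h$ through the one-step Gronwall bound, using the wider $4\delta$ cushion in $\mathcal Q_K$ (with $e^{L\tau}\le 2$) to keep trajectories where $L$ is valid.
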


\begin{proof} The proof has four steps. In Steps 1--2, we construct uniform interior tubes
around the exact characteristics starting from the compact set $K$ and establish their
stability under the exact flow. In Steps 3--4, we cover smaller tubes by mesh
elements and combine
the flow estimate to obtain the separation
\eqref{separation}. 

\textit{Step 1. Construction of an interior tube around the exact flow.} We first collect all exact density characteristics starting from \(K\) and show that they remain in a common compact subset of the open simplex.  
For $0\le t\le T$, set
$ \Gamma(t):=\{\Phi_{r,t}(\mu):0\le r\le t,\ \mu\in K\},
$ where we recall that $\Phi_{r,t}(\mu)$ is the exact density flow
map defined in \eqref{flowmap}. 
The parameter set
$ \{(r,t,\mu):0\le r\le t\le T,\ \mu\in K\}$ is compact.
Proposition~\ref{regularity} ensures that, for every
$(r,\mu)\in[0,T]\times\cP^\circ(G)$, the density characteristic exists
uniquely on $[r,T]$ and remains in $\cP^\circ(G)$.  Moreover, $\mathcal A(\mu,u(t,\mu))$ is continuous in $t$ and locally
Lipschitz in $\mu$ on $\cP^\circ(G)$.  The continuous-dependence theorem
for ordinary differential equations therefore shows that
$(r,t,\mu)\mapsto\Phi_{r,t}(\mu)$ is continuous on the parameter set above. Hence the image of this compact parameter set is compact and contained in \(\cP^\circ(G)\). In particular,  
\begin{equation}
 \Gamma_*:=\bigcup_{0\le t\le T}\Gamma(t)
 \Subset\cP^\circ(G),
 \qquad
 d_K:=\operatorname{dist}_{\bbR_1^d}
 \bigl(\Gamma_*,\partial\cP(G)\bigr)>0.
\end{equation}
All distances between subsets of the density affine space below are taken with respect to the Euclidean metric induced on $\bbR_1^d$.
Moreover, $K\subset\Gamma(t)$ because $\Phi_{t,t}$ is the identity, and
the flow property implies 
$\Phi_{t,r}(\Gamma(t))\subset\Gamma(r)$ for $0\le t\le r\le T$.

Choose $0<\bar r<d_K/2$ and set
$ 
\mathcal Q_K
 :=\bigl\{\mu\in\bbR_1^d:
 \operatorname{dist}(\mu,\Gamma_*)\le2\bar r\bigr\}
 \Subset\cP^\circ(G).$ 
 This neighborhood provides a common region on which the exact vector field
is uniformly Lipschitz. Hence 
on $[0,T]\times \mathcal Q_K$, there is
$L>0$ such that
$\norm{\mathcal A(\mu,u(t,\mu))-\mathcal A(\nu,u(t,\nu))}\le L\norm{\mu-\nu}$.
Fix $\gamma>0$, choose $r_0>0$ with
$r_0e^{(L+\gamma)T}<\bar r$, and define
\[
 r(t):=r_0e^{(L+\gamma)t},\qquad
 D(t):=\{\mu\in\bbR_1^d:
 \operatorname{dist}(\mu,\Gamma(t))\le r(t)\}.
\]
Since $r(t)\le r_0e^{(L+\gamma)T}<\bar r$, we obtain 
\[
 \operatorname{dist}\bigl(D(t),\partial\cP(G)\bigr)
 \ge \operatorname{dist}\bigl(\Gamma(t),\partial\cP(G)\bigr)-r(t)
 >\bar r,
 \qquad 0\le t\le T.
\] 
Hence $D(t)\Subset\cP^\circ(G)$ uniformly in $t$.

\textit{Step 2. Stability of the exact flow inside the tube.} We show that the exact flow maps $D(t)$ strictly inside the later tube, namely, for $0\le t\le q\le T$ and $\mu\in D(t)$,
\begin{equation}\label{flowdistance}
\operatorname{dist} \bigl(\Phi_{t,q}(\mu),\Gamma(q)\bigr)
\le e^{-\gamma(q-t)}r(q).
\end{equation}   
By the compactness of
$\Gamma(t)$, there exists $\nu\in\Gamma(t)$ such that
$ 
 \norm{\mu-\nu}=\operatorname{dist}\bigl(\mu,\Gamma(t)\bigr).
$ Since the Lipschitz constant $L$ is available on
$\mathcal Q_K$, we first verify that the two trajectories remain in this
region. Suppose otherwise that their distance reaches $\bar r$, and let
$s_*\in[t,q]$ be the first time such that
$ 
\|\Phi_{t,s_*}(\mu)-\Phi_{t,s_*}(\nu)\|=\bar r.
$ For $t\le s\le s_*$, we have
$\Phi_{t,s}(\nu)\in\Gamma(s)\subset\Gamma_*$ and
$ 
\|\Phi_{t,s}(\mu)-\Phi_{t,s}(\nu)\|\le\bar r.
$ 
Hence both trajectories lie in $\mathcal Q_K$, and Gronwall's inequality
gives  
\begin{align*}
 \bar r
 &=\norm{\Phi_{t,s_*}(\mu)-\Phi_{t,s_*}(\nu)}\le e^{L(s_*-t)}\norm{\mu-\nu}
 =e^{L(s_*-t)}\operatorname{dist}\bigl(\mu,\Gamma(t)\bigr)\\
 &\le e^{L(s_*-t)}r(t)
 =e^{-\gamma(s_*-t)}r(s_*)<\bar r,
\end{align*}
a contradiction.   Thus the two trajectories remain within distance
$\bar r$ on $[t,q]$, so the same Gronwall estimate applies on the whole
interval.  
Since $\Phi_{t,q}(\nu)\in\Gamma(q)$, 
\begin{align*} \operatorname{dist}\bigl(\Phi_{t,q}(\mu),\Gamma(q)\bigr)
 &\le\norm{\Phi_{t,q}(\mu)-\Phi_{t,q}(\nu)}
 \le e^{L(q-t)}\operatorname{dist}\bigl(\mu,\Gamma(t)\bigr)\le e^{L(q-t)}r(t)
 =e^{-\gamma(q-t)}r(q). 
\end{align*}
which proves \eqref{flowdistance}.

\textit{Step 3. Construction of the mesh-dependent active regions.} We now construct the active regions containing elements of meshes from the continuous tubes obtained
above. Since $r(t)<\bar r$, we have $D(t)\subset\mathcal Q_K$. Set
$
\epsilon_K:=\frac12\min_{\mu\in\mathcal Q_K}\min_{1\le i\le d}\mu_i>0.
$ 
Then $\mathcal Q_K\Subset\cP_\epsilon^\circ$ whenever
$0\le\epsilon<\epsilon_K$, so the triangulation $\mathfrak T_{h,\epsilon}$ covers all the regions
constructed below.  
For $0\le n\le N$, define the smaller tube 
\[
 D_n^-:=\{\mu\in\bbR_1^d:
 \operatorname{dist}(\mu,\Gamma(t_n))
 \le e^{-\gamma\tau/2}r(t_n)\},
\]
and let $\Omega_{n,h}$ be the union of all elements
$S\in\mathfrak T_{h,\epsilon}$ satisfying $S\cap D_n^-\ne\varnothing$. The gap between $D_n^-$ and $D(t_n)$ is of order $\tau$. Indeed, for
$\gamma\tau\le1$, 
\begin{align} \label{tubegap1}
 r(t_n)\bigl(1-e^{-\gamma\tau/2}\bigr)\ge c_1\tau\quad \text{with }c_1:=\frac{r_0\gamma}{4}.
\end{align} 
Choose $\tau_K>0$ such that $\gamma\tau_K\le1$, and choose
$0<c_K\le c_1$. We claim that, whenever
$0<\tau\le\tau_K$ and $h\le c_K\tau$,
\begin{equation}\label{tubeinclusions}
 K\subset D_n^-\subset\Omega_{n,h}\subset D(t_n)
 \subset\mathcal Q_K.
\end{equation}
 Indeed, $K\subset\Gamma(t_n)$ gives $K\subset D_n^-$.
Since $D_n^-\subset \mathcal Q_K\subset \cP_\epsilon^\circ$, the triangulation
covers $D_n^-$, and hence $D_n^-\subset\Omega_{n,h}$ by definition. 
If $\mu\in\Omega_{n,h}$, there exist
$\nu\in D_n^-$ and a selected element $S$ containing both $\mu$ and
$\nu$. Therefore,
\begin{align}\label{apply2}
 \operatorname{dist}\bigl(\mu,\Gamma(t_n)\bigr)
 \le\norm{\mu-\nu}
   +\operatorname{dist}\bigl(\nu,\Gamma(t_n)\bigr)
 \le h+e^{-\gamma\tau/2}r(t_n)
 \le r(t_n),
\end{align} 
where the last inequality follows from
$h\le c_K\tau\le c_1\tau$ and \eqref{tubegap1}.
Thus $\Omega_{n,h}\subset D(t_n)$, while
$D(t_n)\subset \mathcal Q_K$ was established above. This proves \eqref{tubeinclusions}.

\textit{Step 4. Proof of the one-step separation.} It remains to show that the exact one-step image of $\Omega_{n,h}$
lies a distance of order $\tau$ inside the next active region. We first quantify the gap that will be used below. For $\gamma\tau\le1$,
\begin{align}\label{tubegap2}
r(t_{n+1})
\bigl(e^{-\gamma\tau/2}-e^{-3\gamma\tau/2}\bigr)
\ge c_2\tau,
\end{align}
where one may take
$
c_2:=\frac{r_0\gamma e^{-1/2}}{2}.
$ 
Indeed,
$ 
e^{-\gamma\tau/2}-e^{-3\gamma\tau/2}
=
e^{-\gamma\tau/2}(1-e^{-\gamma\tau})
\ge \frac{\gamma e^{-1/2}}{2}\tau.
$ Reducing $\tau_K$ and $c_K$, if necessary, we may assume that
$ 
e^{L\tau_K}\le2,\;
0<c_K\le \min\{c_1,c_2/3\}.
$ 

 For $\mu\in\Omega_{n,h}$, applying \eqref{flowdistance} and \eqref{apply2}, we derive 
\[
 \operatorname{dist}\bigl(
 \Phi_{t_n,t_{n+1}}(\mu),\Gamma(t_{n+1})\bigr)
 \le e^{-3\gamma\tau/2}r(t_{n+1})+e^{L\tau}h.
\]
For every
$y\in\bbR_1^d\setminus D_{n+1}^-$, it follows from the definition of $D_{n+1}^-$ that 
$ 
 \operatorname{dist}\bigl(y,\Gamma(t_{n+1})\bigr)
 >e^{-\gamma\tau/2}r(t_{n+1}).
$ 
Hence
\begin{align*}
 \norm{\Phi_{t_n,t_{n+1}}(\mu)-y}
 &\ge \operatorname{dist}\bigl(y,\Gamma(t_{n+1})\bigr)
 -\operatorname{dist}\bigl(
 \Phi_{t_n,t_{n+1}}(\mu),\Gamma(t_{n+1})\bigr)\\
 &>r(t_{n+1})
 \bigl(e^{-\gamma\tau/2}-e^{-3\gamma\tau/2}\bigr)
 -e^{L\tau}h.
\end{align*} 
Taking the infimum over
$y\in\bbR_1^d\setminus D_{n+1}^-$ and using
\eqref{tubegap2}, $h\le c_K\tau$, and $e^{L\tau}\le2$ yield 
\begin{align*}
 \operatorname{dist}\bigl(
 \Phi_{t_n,t_{n+1}}(\mu),
 \bbR_1^d\setminus D_{n+1}^-\bigr)
 &\ge r(t_{n+1})
 \bigl(e^{-\gamma\tau/2}-e^{-3\gamma\tau/2}\bigr)
 -e^{L\tau}h
 \ge(c_2-2c_K)\tau\ge c_K\tau.
\end{align*}
Since $D_{n+1}^-\subset\Omega_{n+1,h}$, $
\bbR_1^d\setminus\Omega_{n+1,h}
 \subset\bbR_1^d\setminus D_{n+1}^-.$ 
Consequently,
\eqref{separation} holds and the proof is finished. 
\end{proof}
For the domains supplied by Proposition~\ref{tube}, set
\[
 \mathcal T_h^n:=\{S\in\mathfrak T_{h,\epsilon}:S\subset\Omega_{n,h}\},
 \qquad
\mathcal M_h^n:=\{\text{vertices of the elements in }\mathcal T_h^n\}.
\] 
The sets $\Omega_{n,h}$ will be used in the local solvability and convergence
analysis for the numerical scheme \eqref{scheme} (see Theorem~\ref{conv}). 
 For nodal data on
$\mathcal M_h^n$, formula \eqref{interpolant} defines
$\mathcal I_{h,\epsilon}$ on $\Omega_{n,h}$; $ P_{h,\epsilon}v$ denotes
the restriction of $v$ to $\mathcal M_h^n$.  For all sufficiently small $h$,
$\mathcal M_h^n\subset\mathfrak M_{h,\epsilon}^\circ$, and the reconstruction
on $\Omega_{n,h}$ uses the values at these interior nodes.   
The local solvability (Lemma \ref{local}) and error analysis (Section \ref{analysis}) of the numerical scheme require uniform
coefficient bounds on the relevant compact set of density--value
pairs. 
Let $K$ be the compact set fixed in Theorem~\ref{conv}, and let
$\mathcal Q_K$ be given by Proposition~\ref{tube}. Set 
\begin{equation}\label{localcylinder}
 R:=2+\norm{u}_{L^\infty([0,T]\times\mathcal Q_K)},
 \qquad
 \mathcal K_K:=\mathcal Q_K\times\overline B_R(0).
\end{equation}
By the $\mathcal C^1$ regularity of the coefficients and
compactness of $\mathcal K_K$, the bounds below hold with positive constants $C_{\mathcal A}$, $H_0$, $L_H$, and $L_{\mathcal A}$ for all
$(\mu,z),(\nu,w)\in\mathcal K_K$: 
\begin{align}
 &\norm{\mathcal A(\mu,z)}\le C_{\mathcal A},
 \quad \norm{\bm H(\mu,\gradG z)}\le H_0,\label{localbounds}\\&
 \norm{\bm H(\mu,\gradG z)-\bm H(\nu,\gradG w)}
 \le L_H\bigl(\norm{\mu-\nu}+\norm{z-w}\bigr),
 \label{localhlip}\\
 &\norm{\mathcal A(\mu,z)-\mathcal A(\nu,w)}
 \le L_{\mathcal A}\bigl(\norm{\mu-\nu}+\norm{z-w}\bigr).
 \label{localalip}
\end{align}

The next lemma converts an approximate characteristic pair into a numerical solution of \eqref{scheme}. Its hypotheses separate two quantities: the Lipschitz bound of the reconstructed next layer and the residual obtained by inserting the exact pair. In the convergence proof, both are controlled by the next-layer error, which allows local solvability and the error estimate to be propagated together.

\begin{lemma}\label{local} 
Let the conditions of Theorem~\ref{conv} hold. Fix $0\le n<N$ and $\mu^\ell\in\mathcal M_h^n$, and put
$
\mu_*^+:=\Phi_{t_n,t_{n+1}}(\mu^\ell),\;
 u_*^n:=u(t_n,\mu^\ell),$ and $
 \mathcal D_*:=\bbR_1^d\cap
 \overline B(\mu_*^+,c_K\tau/2) \subset\Omega_{n+1,h}.
$  
Assume that for some $\Lambda\ge0$, 
\begin{equation}\label{bootstrap}
 \norm{D_\mu (\mathcal I_{h,\epsilon}u_h^{n+1}) }_{L^\infty(S)}\le\Lambda
 \quad\text{for every }S\in\mathcal T_h^{n+1},
\end{equation}
and that for some $r_\rho\in\bbR_0^d$ and $r_z\in\bbR^d$,
\begin{subequations}
\begin{align}
 \mu_*^+&=\mu^\ell+\tau\mathcal A(\mu_*^+,u_*^n)+r_\rho,
 \label{referencerho}\\
 (I-\tau\DeltaG)u_*^n&=\mathcal I_{h,\epsilon}u_h^{n+1} (\mu_*^+)
 -\tau\bm H(\mu_*^+,\gradG u_*^n)+r_z.
 \label{referenceu}
\end{align}
\end{subequations}
There are $\kappa_0,c_0>0$  independent
of $n$ and $h$, such that if
\begin{equation}\label{smalllocal}
 \tau(1+\Lambda)\le\kappa_0,\qquad
 \norm{r_\rho}+\tau\norm{r_z}\le c_0\tau,
\end{equation}
then \eqref{scheme} has a unique solution $(\mu_\ell^+, u_\ell^n )\in \mathcal D_*\times\overline B(u_*^n,1)$. 
\end{lemma}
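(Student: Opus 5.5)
The plan is to run two nested contraction arguments: an inner one for the density at a frozen value, and an outer one for the value. Throughout, the ball $\overline B(u_*^n,1)$ lies in $\overline B_R(0)$ because $R=2+\norm{u}_{L^\infty}$, and $\mathcal D_*\subset\Omega_{n+1,h}\subset\mathcal Q_K$ by Proposition~\ref{tube}. Hence every pair $(\mu,z)\in\mathcal D_*\times\overline B(u_*^n,1)$ lies in $\mathcal K_K$, and the bounds \eqref{localbounds}--\eqref{localalip} apply.

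\emph{Inner map.} Fix $z\in\overline B(u_*^n,1)$ and define $T_z(\mu):=\mu^\ell+\tau\mathcal A(\mu,z)$ on $\mathcal D_*$. Since $\mathcal A\in\bbR_0^d$, the map $T_z$ sends $\bbR_1^d$ into itself, and it is a contraction with constant $\tau L_{\mathcal A}\le 1/2$ once $\kappa_0$ is small. Subtracting \eqref{referencerho} gives
\[
\norm{T_z(\mu)-\mu_*^+}\le \tau L_{\mathcal A}\bigl(\norm{\mu-\mu_*^+}+\norm{z-u_*^n}\bigr)+\norm{r_\rho}.
\]
The right-hand side is at most $\tau L_{\mathcal A}(c_K\tau/2+1)+c_0\tau$. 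We need this to be at most $c_K\tau/2$, which requires $L_{\mathcal A}(c_K\tau+2)+2c_0\le c_K$. The trouble is the term $\tau L_{\mathcal A}\norm{z-u_*^n}$, which is of size $\tau$ with a constant that is not small relative to $c_K$.

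To fix this, I would restrict $z$ to a smaller ball $\norm{z-u_*^n}\le\delta$, with $\delta$ chosen so that $L_{\mathcal A}\delta\le c_K/8$. I would then show in the outer step that the fixed point lands in this ball, since $\norm{u_\ell^n-u_*^n}=O(\norm{r_\rho}+\tau\norm{r_z})/(1-\ldots)\le Cc_0$. With $c_0$ small this keeps the fixed point inside $\overline B(u_*^n,\delta)$. Uniqueness in the full ball $\overline B(u_*^n,1)$ must then be handled separately, via a direct difference estimate between two solutions. So $T_z$ maps $\mathcal D_*$ into itself, and Banach's theorem gives a unique $\mu(z)$. The same contraction bound shows that $z\mapsto\mu(z)$ is Lipschitz with constant $2\tau L_{\mathcal A}$.

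\emph{Outer map.} Define
\[
\Psi(z):=S_\tau\bigl[\mathcal I_{h,\epsilon}u_h^{n+1}(\mu(z))-\tau\bm H(\mu(z),\gradG z)\bigr],
\]
using Lemma~\ref{resolvent}. Because $\mathcal D_*$ is convex and contained in $\Omega_{n+1,h}$, the segment between any two arrival points crosses only elements of $\mathcal T_h^{n+1}$. The piecewise affine interpolant is continuous, so \eqref{bootstrap} makes it $\Lambda$-Lipschitz on $\mathcal D_*$. Together with $\norm{S_\tau}\le1$ this yields
\[
\norm{\Psi(z)-\Psi(w)}\le\bigl(2\tau L_{\mathcal A}(\Lambda+\tau L_H)+\tau L_H(1+2\tau L_{\mathcal A})\bigr)\norm{z-w}\le C\tau(1+\Lambda)\norm{z-w},
\]
which is a contraction with constant at most $1/2$ when $\kappa_0$ is small.

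For the self-mapping property, write $u_*^n=S_\tau[\mathcal I u_h^{n+1}(\mu_*^+)-\tau\bm H(\mu_*^+,\gradG u_*^n)+r_z]$ using \eqref{referenceu}. Then
\[
\norm{\Psi(z)-u_*^n}\le(\Lambda+\tau L_H)\norm{\mu(z)-\mu_*^+}+\tau L_H\norm{z-u_*^n}+\tau\norm{r_z}.
\]
The fixed-point estimate for $T_z$, now with the residual kept, gives $\norm{\mu(z)-\mu_*^+}\le 2(\tau L_{\mathcal A}\norm{z-u_*^n}+\norm{r_\rho})$. Substituting, the right-hand side becomes $C\tau(1+\Lambda)\norm{z-u_*^n}+C(1+\Lambda)\norm{r_\rho}+\tau\norm{r_z}$.

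\emph{Main obstacle.} The delicate term is $\Lambda\norm{r_\rho}$. The factor $\Lambda$ is only controlled through $\tau\Lambda\le\kappa_0$, so $\Lambda\norm{r_\rho}\le\Lambda c_0\tau\le c_0\kappa_0$. This term is therefore of order $c_0\kappa_0$, which can be made less than $\delta/2$ by taking $c_0$ small. Once that holds, $\Psi$ maps $\overline B(u_*^n,\delta)$ into itself and the Banach fixed-point theorem applies.

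\emph{Uniqueness.} For uniqueness on all of $\mathcal D_*\times\overline B(u_*^n,1)$, I would take two solutions and subtract their equations. This gives $\norm{\mu_1-\mu_2}\le2\tau L_{\mathcal A}\norm{z_1-z_2}$, and then $\norm{z_1-z_2}\le C\tau(1+\Lambda)\norm{z_1-z_2}$, which forces equality. This argument needs only that both pairs lie in $\mathcal K_K$ and the Lipschitz bound on $\mathcal D_*$, so it covers the full ball.
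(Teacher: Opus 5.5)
Your argument is correct, but it nests the two fixed points in the opposite order from the paper.

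The paper's route is as follows. It first solves the value equation for each frozen density $\nu\in\mathcal D_*$ on the full ball $\overline B(u_*^n,1)$. This gives $z(\nu)$, with Lipschitz constant $L_z=(\Lambda+\tau L_H)/(1-\tau L_H)$. It then contracts the density map $\mathcal R(\nu)=\mu^\ell+\tau\mathcal A(\nu,z(\nu))$ on $\mathcal D_*$, with constant $\tau L_{\mathcal A}(1+L_z)=O(\kappa_0)$.

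In that ordering the obstacle you identified never appears. The value map only sees density offsets of size $c_K\tau/2$, and their product with $\Lambda+\tau L_H$ is $O(\kappa_0)$. The center defect of $\mathcal R$ satisfies $\tau L_{\mathcal A}\norm{z(\mu_*^+)-u_*^n}+\norm{r_\rho}\lesssim\tau\norm{r_z}+\norm{r_\rho}\le Cc_0\tau$, because \eqref{referenceu} pins $z(\mu_*^+)$ within $\norm{r_z}/(1-\tau L_H)$ of $u_*^n$. So the paper can work on the whole unit ball, and uniqueness comes for free from the uniqueness of both fixed points.

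In your ordering, the density map must be a self-map of $\mathcal D_*$ uniformly over every frozen value $z$. This forces the shrunken radius $\delta\lesssim c_K/L_{\mathcal A}$ (with $\delta\le1$ so all pairs stay in $\mathcal K_K$), and hence a separate uniqueness argument. Your direct difference estimate for two solutions in $\mathcal D_*\times\overline B(u_*^n,1)$ is valid, and shorter than chaining nested uniqueness. Your version also has the small bonus of locating the solution within $O(c_0)$ of $u_*^n$.

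There is one slip. In the self-mapping estimate for $\Psi$, the residual enters as $\norm{r_z}$, not $\tau\norm{r_z}$, since $r_z$ appears in \eqref{referenceu} without a factor $\tau$. This is harmless: \eqref{smalllocal} gives $\norm{r_z}\le c_0$, so $\norm{\Psi(z)-u_*^n}\le\delta$ still follows once $c_0$ is small relative to $\delta$.
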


\begin{proof}
We solve the coupled system by two fixed-point arguments: Step~1 solves
the value equation for a fixed density, and Step~2 solves the resulting
density equation. 

\textit{Step 1: The value fixed point.}
Because $\mathcal D_*$ is convex and contained in
$\Omega_{n+1,h}$, integration of \eqref{bootstrap} along line segments gives
\begin{equation}\label{vlocal}
 \norm{\mathcal I_{h,\epsilon}u_h^{n+1} (\nu)-\mathcal I_{h,\epsilon}u_h^{n+1} (\widetilde\nu)}
 \le\Lambda\norm{\nu-\widetilde\nu},
 \qquad \nu,\widetilde\nu\in\mathcal D_*.
\end{equation} 
For fixed $\nu\in\mathcal D_*$, define
$ 
 \mathcal Q_\nu(z):=S_\tau\bigl[
 \mathcal I_{h,\epsilon}u_h^{n+1} (\nu)-\tau\bm H(\nu,\gradG z)\bigr]
$ 
on $\mathcal Z_*:=\overline B(u_*^n,1)$.  The fixed-point form of
\eqref{referenceu} is
$
u_*^n=S_\tau\bigl[\mathcal I_{h,\epsilon}u_h^{n+1} (\mu_*^+)
 -\tau\bm H(\mu_*^+,\gradG u_*^n)+r_z\bigr].
$ 
By \eqref{separation},
$\mathcal D_*\subset\Omega_{n+1,h}\subset\mathcal Q_K$.  Moreover,
$\mu^\ell\in\Omega_{n,h}\subset\mathcal Q_K$ and
\[
 \norm{u_*^n}\le R-2,
 \qquad \norm{z}\le\norm{u_*^n}+1\le R-1
 \quad\text{for }z\in\mathcal Z_*.
\]
Therefore $\mathcal D_*\times\mathcal Z_*\subset\mathcal K_K$, so the
local estimate \eqref{localhlip} is applicable. Together with   Lemma~\ref{resolvent} and
\eqref{vlocal}, we obtain \begin{align}
 \norm{\mathcal Q_\nu(u_*^n)-u_*^n}
 &\le(\Lambda+\tau L_H)\norm{\nu-\mu_*^+}+\norm{r_z},
 \label{valuecenter}\\
 \norm{\mathcal Q_\nu(z)-\mathcal Q_\nu(\widetilde z)}
 &\le\tau L_H\norm{z-\widetilde z}.
 \label{valuecontract}
\end{align}
Since $\norm{\nu-\mu_*^+}\le c_K\tau/2$, choose
$\kappa_0,c_0>0$ so that
$ 
\kappa_0L_H\le\frac12,
\;
 \frac{c_K}{2}\bigl(\kappa_0+L_H\kappa_0^2\bigr)+c_0
\le\frac12.$ Since $\tau>0$, the second condition in
\eqref{smalllocal} implies $\norm{r_z}\le c_0$. Hence
\begin{align*}
 (\Lambda+\tau L_H)\frac{c_K\tau}{2}+\norm{r_z}
 \le\frac{c_K}{2}
 \bigl(\kappa_0+L_H\kappa_0^2\bigr)+c_0
 \le\frac12,\quad 
 \tau L_H\le\kappa_0L_H\le\frac12.
\end{align*} 
Then $\mathcal Q_\nu$ maps $\mathcal Z_*$ into itself and is a contraction. Applying 
Banach's theorem, there exists a unique fixed point $z(\nu)\in\mathcal Z_*$ satisfying
\begin{equation}\label{value-eq}
 (I-\tau\DeltaG)z(\nu)={\bigl(\mathcal I_{h,\epsilon}u_h^{n+1}\bigr)(\nu)}
 -\tau\bm H(\nu,\gradG z(\nu)).
\end{equation} 
For $\nu,\widetilde\nu\in\mathcal D_*$, subtracting the two fixed-point
identities and using Lemma~\ref{resolvent}, \eqref{vlocal} and \eqref{localhlip} yields 
$ 
 (1-\tau L_H)\norm{z(\nu)-z(\widetilde\nu)}
 \le(\Lambda+\tau L_H)\norm{\nu-\widetilde\nu}.
$ 
Thus
\begin{equation}\label{zlip}
 \norm{z(\nu)-z(\widetilde\nu)}
 \le L_z\norm{\nu-\widetilde\nu},
 \qquad L_z:=\frac{\Lambda+\tau L_H}{1-\tau L_H}.
\end{equation}

\textit{Step 2: The density fixed point.}
Define $\mathcal R:\mathcal D_*\to\bbR_1^d$ by
$ 
 \mathcal R(\nu):=\mu^\ell+\tau\mathcal A(\nu,z(\nu))$.  By \eqref{localalip} and \eqref{zlip},
\begin{equation}\label{rhocontract}
 \norm{\mathcal R(\nu)-\mathcal R(\widetilde\nu)}
 \le q_\tau\norm{\nu-\widetilde\nu},
 \qquad q_\tau:=\tau L_{\mathcal A}(1+L_z).
\end{equation}
Moreover, \eqref{referencerho} implies
$ 
 \mathcal R(\mu_*^+)-\mu_*^+
 =\tau\bigl[\mathcal A(\mu_*^+,z(\mu_*^+))
 -\mathcal A(\mu_*^+,u_*^n)\bigr]-r_\rho.
$ 
Since $z(\mu_*^+)$ is the fixed point of
$\mathcal Q_{\mu_*^+}$,
$
 z(\mu_*^+)=\mathcal Q_{\mu_*^+}\bigl(z(\mu_*^+)\bigr).
$ 
Setting $\nu=\mu_*^+$ in \eqref{valuecenter}, we obtain
$ 
 \norm{\mathcal Q_{\mu_*^+}(u_*^n)-u_*^n}\le\norm{r_z}.
$ 
Therefore, by \eqref{valuecontract},
\begin{align*}
 \norm{z(\mu_*^+)-u_*^n}
 &\le\norm{\mathcal Q_{\mu_*^+}\bigl(z(\mu_*^+)\bigr)
          -\mathcal Q_{\mu_*^+}(u_*^n)}
      +\norm{\mathcal Q_{\mu_*^+}(u_*^n)-u_*^n}
      \le\tau L_H\norm{z(\mu_*^+)-u_*^n}+\norm{r_z}.
\end{align*}
Since $\tau L_H<1$, it follows that
$ 
 \norm{z(\mu_*^+)-u_*^n}\le\frac{\norm{r_z}}{1-\tau L_H}.
$ 
Consequently,
\begin{equation}\label{densitycenter}
 \norm{\mathcal R(\mu_*^+)-\mu_*^+}
 \le\frac{\tau L_{\mathcal A}}{1-\tau L_H}\norm{r_z}
 +\norm{r_\rho}.
\end{equation}
Reduce $\kappa_0$ and $c_0$ so that \eqref{smalllocal} implies
$q_\tau\le1/2$ and the right-hand side of \eqref{densitycenter} is at most
 $c_K\tau/4$.  Then, by \eqref{rhocontract}, for $\nu\in\mathcal D_*$,
$ 
 \norm{\mathcal R(\nu)-\mu_*^+}
 \le q_\tau\norm{\nu-\mu_*^+}
 +\norm{\mathcal R(\mu_*^+)-\mu_*^+}
 \le\frac{c_K\tau}{2}.$ 
Thus $\mathcal R$ is a contraction from $\mathcal D_*$
into itself, and thus it has a unique fixed point
$\mu_\ell^+\in\mathcal D_*$.  Set $u_\ell^n:=z(\mu_\ell^+)$.  The fixed-point
identities for $\mathcal R$ and $z$ recover \eqref{scheme}, and
$\mathcal D_*\subset\Omega_{n+1,h}$. 

If
$(\widetilde\mu,\widetilde u)\in
\mathcal D_*\times\overline B(u_*^n,1)$ is another solution of
\eqref{scheme}, then the uniqueness in \eqref{value-eq} implies 
$\widetilde u=z(\widetilde\mu)$, and the density equation yields
$\mathcal R(\widetilde\mu)=\widetilde\mu$.  The uniqueness of the fixed
point of $\mathcal R$ yields $\widetilde\mu=\mu_\ell^+$ and hence
$\widetilde u=u_\ell^n$. 
\end{proof}

\section{Convergence analysis}\label{analysis}

This section proves Theorem~\ref{conv}, Corollary~\ref{feedback}, and
Theorem~\ref{actiontheorem}. 
We first estimate the local residual and the stability of a numerical update. A backward induction then constructs the numerical solution and proves the master-field estimate. The feedback estimate follows by forward error propagation, and the action estimate by a feasible variation around the exact minimizer.

\subsection{Local error estimate}
The next lemma quantifies the local error of the numerical 
step along an exact characteristic. 
\begin{lemma}
\label{residual}
Let the conditions of Theorem~\ref{conv} hold. Let $0\le n<N$ and
$\mu\in\Omega_{n,h}$, and set $u_*^n:=u(t_n,\mu)$ and
$\mu_*^+:=\Phi_{t_n,t_{n+1}}(\mu)$. Then 
\begin{subequations}
\begin{align}
 &\mu_*^+
 =\mu+\tau\mathcal A(\mu_*^+,u_*^n)+r_\mu^n,
 \quad \norm{r_\mu^n}\le C\tau^2,
 \label{rhores}\\
 &(I-\tau\DeltaG)u_*^n
 =\bigl(\mathcal I_{h,\epsilon}P_{h,\epsilon}
 u(t_{n+1},\cdot)\bigr)(\mu_*^+) 
 -\tau\bm H(\mu_*^+,\gradG u_*^n)+r_u^n,
 &\norm{r_u^n}&\le C\bigl(\tau^2+h\omega_K(h)\bigr),
 \label{ures}
\end{align}
\end{subequations}
\end{lemma}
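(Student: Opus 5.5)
The plan is to treat the two residuals separately. Each one is a one-step consistency estimate along the exact characteristic $s\mapsto\rho(s):=\Phi_{t_n,s}(\mu)$, $s\in[t_n,t_{n+1}]$, with $\phi(s):=u(s,\rho(s))$. By Proposition~\ref{regularity}, $(\rho,\phi)$ solves \eqref{mfg} on $[t_n,T]$.

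By Proposition~\ref{tube} (Steps 1--2), $\rho(s)\in D(s)\subset\mathcal Q_K$ for $s\in[t_n,t_{n+1}]$ whenever $\mu\in\Omega_{n,h}\subset D(t_n)$. Moreover $\norm{\phi(s)}\le R-2$, so every pair $(\rho(s),\phi(s))$ lies in $\mathcal K_K$, where \eqref{localbounds}--\eqref{localalip} apply. Consequently $\norm{\dot\rho}\le C_{\mathcal A}$. Also $\norm{\dot\phi}\le H_0+\norm{\DeltaG}(R-2)$, using the value equation in \eqref{mfg}.

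\textbf{Density residual.} Define $r_\mu^n$ by \eqref{rhores} and integrate the density equation of \eqref{mfg} over $[t_n,t_{n+1}]$:
\[
r_\mu^n=\int_{t_n}^{t_{n+1}}\bigl[\mathcal A(\rho(s),\phi(s))-\mathcal A(\rho(t_{n+1}),\phi(t_n))\bigr]\,\mathrm ds .
\]
By \eqref{localalip}, the integrand is bounded by $L_{\mathcal A}\bigl(\norm{\rho(s)-\rho(t_{n+1})}+\norm{\phi(s)-\phi(t_n)}\bigr)\le C\tau$, using the Lipschitz-in-time bounds on $\rho$ and $\phi$ above. This gives $\norm{r_\mu^n}\le C\tau^2$. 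Since $\mathcal A\in\bbR_0^d$, we also get $r_\mu^n\in\bbR_0^d$.

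\textbf{Value residual.} Split it as
\[
r_u^n=\bigl[(I-\tau\DeltaG)\phi(t_n)-\phi(t_{n+1})+\tau\bm H(\rho(t_{n+1}),\gradG\phi(t_n))\bigr]+\bigl[u(t_{n+1},\mu_*^+)-(\mathcal I_{h,\epsilon}P_{h,\epsilon}u(t_{n+1},\cdot))(\mu_*^+)\bigr].
\]
Here $\phi(t_{n+1})=u(t_{n+1},\mu_*^+)$ has been inserted.

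For the first bracket, write $\phi(t_{n+1})-\phi(t_n)=\int\dot\phi\,\mathrm ds$ and use the value equation of \eqref{mfg}. The bracket becomes
\[
\int_{t_n}^{t_{n+1}}\bigl[\bm H(\rho(t_{n+1}),\gradG\phi(t_n))-\bm H(\rho(s),\gradG\phi(s))+\DeltaG\phi(s)-\DeltaG\phi(t_n)\bigr]\,\mathrm ds .
\]
By \eqref{localhlip} and the time-Lipschitz bounds, this is $O(\tau^2)$.

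For the second bracket, apply \eqref{ierror} with $v=u(t_{n+1},\cdot)$ at the point $\mu_*^+\in\Omega_{n+1,h}$. The segments used in the proof of \eqref{ierror} stay inside one element $S\subset\mathcal P_{\epsilon_K}$, so the relevant modulus is $\omega_K$. This gives the bound $C_Ih\omega_K(h)$.

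\textbf{Main obstacle.} The delicate point is not the Taylor-type estimates but the localization. One must check that the element of $\mathfrak T_{h,\epsilon}$ containing $\mu_*^+$ belongs to $\mathcal T_h^{n+1}$, so that the restricted interpolant is defined with nodes in $\mathcal P_{\epsilon_K}$, and the modulus of $D_\mu u$ on that element is dominated by $\omega_K$. For this I would use \eqref{separation}: $\mu_*^+$ is at distance at least $c_K\tau\ge h$ from $\bbR_1^d\setminus\Omega_{n+1,h}$. Hence any element containing $\mu_*^+$ lies in $\Omega_{n+1,h}\subset\mathcal Q_K\subset\mathcal P_{\epsilon_K}$, by the choice of $\epsilon_K$ in Step 3.

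Everything else is a routine Lipschitz estimate on the compact set $\mathcal K_K$. The constants depend only on $K$ through $L_H$, $L_{\mathcal A}$, $H_0$, $C_{\mathcal A}$, $R$ and $C_I$.
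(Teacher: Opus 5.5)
Your proposal is correct and follows essentially the same route as the paper. Both arguments integrate the density and value equations along the exact characteristic $\rho(s)=\Phi_{t_n,s}(\mu)$, use $(\rho(s),u(s,\rho(s)))\in\mathcal K_K$ to obtain $O(\tau^2)$ Lipschitz remainders, and bound the leftover interpolation term by the elementwise form of \eqref{ierror} on an element of $\mathcal T_h^{n+1}$ containing $\mu_*^+\in\Omega_{n+1,h}$. Your extra localization check (via \eqref{separation} and $h\le c_K\tau$, every element containing $\mu_*^+$ lies in $\Omega_{n+1,h}\subset\mathcal Q_K\subset\mathcal P_{\epsilon_K}$) is slightly stronger than needed: the paper only uses $\mu_*^+\in\Omega_{n+1,h}$ and that $\Omega_{n+1,h}$ is a union of elements of $\mathcal T_h^{n+1}$.
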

\begin{proof}
Let $\rho(s):=\rho(s;t_n,\mu)$ be given by
\eqref{representation}, and set $w(s)=u(s,\rho(s))$, so that
$w(t_n)=u_*^n$ and
$w(t_{n+1})=u(t_{n+1},\mu_*^+)$.
Since $\mu\in\Omega_{n,h}$, \eqref{tubeinclusions} and
\eqref{flowdistance} imply
\[
 \rho(s)\in D(s)\subset\mathcal Q_K,
 \qquad t_n\le s\le t_{n+1},
 \qquad
 \mu_*^+=\rho(t_{n+1})\in\Omega_{n+1,h},
\]
where the last inclusion follows from \eqref{separation}.
Integrating the density equation in \eqref{representation}
over $[t_n,t_{n+1}]$ yields 
\begin{align*}
 \mu_*^+-\mu&=\tau\mathcal A(\mu_*^+,u_*^n)+r_\mu^n\quad \text{with }\;
 r_\mu^n:=\int_{t_n}^{t_{n+1}}
 \bigl[\mathcal A(\rho(s),w(s))-\mathcal A(\mu_*^+,u_*^n)\bigr]\,\mathrm ds.
 \end{align*} 
We then estimate $\|r^n_{\mu}\|.$  Along the characteristic, \eqref{eq_intro} gives
\begin{equation}\label{chain}
 \dot w(s)=\bm H(\rho(s),\gradG w(s))-\DeltaG w(s).
\end{equation}
Moreover, 
$(\rho(s),w(s))\in\mathcal K_K$, where we recall $\mathcal K_K$ is given in \eqref{localcylinder}.   Equation
\eqref{localbounds} and the boundedness of $\DeltaG$ then imply, for some
$C>0$,
\begin{align}\label{udisp}
  \norm{\dot w(s)}\le C,\quad 
  \norm{w(s)-u_*^n}
  \le\int_{t_n}^s\norm{\dot w(r)}\,\mathrm dr
  \le C(s-t_n).
 \end{align} 
Moreover, integrating the characteristic backward from $t_{n+1}$ yields
\begin{align}\label{rhodisp}
  \norm{\rho(s)-\mu_*^+}
  &=\norm{-\int_s^{t_{n+1}}\dot\rho(r)\,\mathrm dr}
  =\norm{-\int_s^{t_{n+1}}
    \mathcal A(\rho(r),w(r))\,\mathrm dr}
  \le C_{\mathcal A}(t_{n+1}-s).
 \end{align} 
By \eqref{localalip},
\eqref{udisp}, and
\eqref{rhodisp},
$ 
 \norm{r_\mu^n}
 \le L_{\mathcal A}\int_{t_n}^{t_{n+1}}
 \bigl[C_{\mathcal A}(t_{n+1}-s)+C(s-t_n)\bigr]\,\mathrm ds\leq C\tau^2, 
$ 
where we use the fact that the pairs $(\rho(s),w(s))$ and
$(\mu_*^+,u_*^n)$ belong to $\mathcal K_K$.

Integration of \eqref{chain} yields
$ 
 w(t_{n+1})-u_*^n
 =\int_{t_n}^{t_{n+1}}
 \bigl[\bm H(\rho(s),\gradG w(s))-\DeltaG w(s)\bigr]\mathrm ds, 
$  which leads to 
\begin{align*}
 (I-\tau\DeltaG)u_*^n
 &=w(t_{n+1})-\tau\bm H(\mu_*^+,\gradG u_*^n)\\
 &\quad -\int_{t_n}^{t_{n+1}}
 \Bigl[\bm H(\rho(s),\gradG w(s))
       -\bm H(\mu_*^+,\gradG u_*^n)-\DeltaG\bigl(w(s)-u_*^n\bigr)\Bigr]\,\mathrm ds.  
\end{align*}
Hence we obtain \eqref{ures} with 
$r_u^n:=w(t_{n+1})
-\bigl(\mathcal I_{h,\epsilon}P_{h,\epsilon}
u(t_{n+1},\cdot)\bigr)(\mu_*^+)- \int_{t_n}^{t_{n+1}}
 \bigl[\bm H(\rho(s),\gradG w(s))
       -\bm H(\mu_*^+,\gradG u_*^n)-\DeltaG\bigl(w(s)-u_*^n\bigr)\bigr]\,\mathrm ds$.  
Since every element $S\in\mathcal T_h^{n+1}$ is contained in
$\Omega_{n+1,h}\subset\mathcal Q_K$, the elementwise interpolation
estimate \eqref{ierror} gives 
\begin{align}
\norm{r_u^n}
& \le C_Ih\omega_K(h)
 + \bigl(L_H+\norm{\DeltaG}_{\mathcal L(\bbR^d)}\bigr)
 \int_{t_n}^{t_{n+1}}
 \bigl[\norm{\rho(s)-\mu_*^+}+\norm{w(s)-u_*^n}\bigr]\,\mathrm ds\notag\\
 & \leq C(h\omega_K(h)
 +\tau^2), 
 \end{align}
 where we use \eqref{localhlip} and  the boundedness of
$\DeltaG$. 
 This proves both residual estimates with constants independent of
$n$, $h$, and $\mu$. 
\end{proof}

Set the global error at $t_n$ time level: 
$
E_n:=\max_{\mu^\ell\in\mathcal M_h^n}
 \norm{u_h^n(\mu^\ell)-u(t_n,\mu^\ell)},
$ 
We then compare the numerical solution with the exact characteristic, and derive the
one-step stability estimate used in the global induction.
\begin{lemma}
\label{stability} 
Let the conditions of Theorem~\ref{conv} hold. Fix $0\le n<N$ and
$\mu^\ell\in\mathcal M_h^n$, and set
$
 u_*^n:=u(t_n,\mu^\ell),\;
 \mu_*^+:=\Phi_{t_n,t_{n+1}}(\mu^\ell),\;
 \mathcal D_*:=\bbR_1^d\cap\overline B(\mu_*^+,c_K\tau/2).
$
Let $(\mu_\ell^+,u_\ell^n)$ be a solution of \eqref{scheme} in
$\mathcal D_*\times\overline B(u_*^n,1)$.
There are $\tau_0,C>0$, independent of
$n$ and $h$, such that, for $0<\tau\le\tau_0$, 
\begin{align}
 &\norm{\mu_\ell^+-\mu_*^+}
 \le C\tau E_{n+1}
 +C\bigl(\tau^2+\tau h\omega_K(h)\bigr),
 \label{rhostep}\\
 &\norm{u_\ell^n-u_*^n}
 \le(1+C\tau)E_{n+1}
+C\bigl(\tau^2+h\omega_K(h)\bigr),
 \label{ustep}
\end{align} 
where $\|\cdot\|$ is the Euclidean norm
in $\bbR^d$. \end{lemma}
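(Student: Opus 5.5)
We subtract the reference identities of Lemma~\ref{residual} from the scheme \eqref{scheme} and use the Lipschitz bounds on $\mathcal K_K$. The main point is to split the next-layer reconstruction error into an interpolation part and a nodal part:
\[
\mathcal I_{h,\epsilon}u_h^{n+1}(\nu)-\mathcal I_{h,\epsilon}P_{h,\epsilon}u(t_{n+1},\cdot)(\nu)=\mathcal I_{h,\epsilon}\bigl(u_h^{n+1}-P_{h,\epsilon}u(t_{n+1},\cdot)\bigr)(\nu).
\]
For $\nu\in\mathcal D_*\subset\Omega_{n+1,h}$, every element containing $\nu$ has its vertices in $\mathcal M_h^{n+1}$. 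By \eqref{istab} applied elementwise, this difference is bounded by $E_{n+1}$. The Lipschitz constant of the exact interpolant $\mathcal I_{h,\epsilon}P_{h,\epsilon}u(t_{n+1},\cdot)$ on $\mathcal D_*$ is at most $C_I\norm{D_\mu u}_{L^\infty(\mathcal Q_K)}=:\Lambda_*$, by \eqref{ilip} and convexity of $\mathcal D_*$. We never use the inverse estimate \eqref{inverse} here. This is deliberate: the numerical field enters only through its nodal error, which is what yields the factor $1+C\tau$ rather than a factor depending on $h^{-1}$.

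\emph{Value difference.} Write $e_\mu:=\mu_\ell^+-\mu_*^+$ and $e_u:=u_\ell^n-u_*^n$. Subtracting \eqref{ures} from \eqref{schemeu} and applying $S_\tau$ gives
\[
e_u=S_\tau\Bigl[\mathcal I_{h,\epsilon}(u_h^{n+1}-P_{h,\epsilon}u(t_{n+1}))(\mu_\ell^+)+\bigl(\mathcal I_{h,\epsilon}P_{h,\epsilon}u(t_{n+1})(\mu_\ell^+)-\mathcal I_{h,\epsilon}P_{h,\epsilon}u(t_{n+1})(\mu_*^+)\bigr)-\tau\bigl(\bm H(\mu_\ell^+,\gradG u_\ell^n)-\bm H(\mu_*^+,\gradG u_*^n)\bigr)-r_u^n\Bigr].
\]
Both pairs lie in $\mathcal K_K$, as in the proof of Lemma~\ref{local}. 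Using Lemma~\ref{resolvent} and \eqref{localhlip}, we obtain
\[
(1-\tau L_H)\norm{e_u}\le E_{n+1}+(\Lambda_*+\tau L_H)\norm{e_\mu}+C(\tau^2+h\omega_K(h)).
\]
There is a norm issue here: \eqref{istab} controls the Euclidean norm of a convex combination by the maximum of the Euclidean nodal norms, consistent with the definition of $E_{n+1}$. The resolvent bound is used in $\ell^2$.

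\emph{Density difference.} Subtracting \eqref{rhores} from \eqref{schemerho} and using \eqref{localalip} gives
\[
\norm{e_\mu}\le\tau L_{\mathcal A}(\norm{e_\mu}+\norm{e_u})+C\tau^2.
\]
Inserting the value inequality, for $\tau\le\tau_0$ small (depending on $L_{\mathcal A},L_H,\Lambda_*$) we absorb $\norm{e_\mu}$ and get
\[
\norm{e_\mu}\le C\tau\bigl(E_{n+1}+\tau^2+h\omega_K(h)\bigr)+C\tau^2,
\]
which is \eqref{rhostep}. The crucial point is that $e_\mu$ picks up only a factor $\tau$ in front of $E_{n+1}$ and $r_u^n$, and $\tau\cdot\tau^2\le\tau^2$.

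\emph{Closing the value estimate.} Plugging \eqref{rhostep} back into the value inequality gives
\[
\norm{e_u}\le(1-\tau L_H)^{-1}\Bigl[E_{n+1}+(\Lambda_*+\tau L_H)\bigl(C\tau E_{n+1}+C\tau^2+C\tau h\omega_K(h)\bigr)+C(\tau^2+h\omega_K(h))\Bigr].
\]
Then $(1-\tau L_H)^{-1}\le1+2\tau L_H$ yields \eqref{ustep}. Note that $C$ depends only on $K$, through $\Lambda_*,L_H,L_{\mathcal A}$ and the residual constants, so it is independent of $n$ and $h$.

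The main obstacle is the decomposition in the first paragraph: it must be justified that $\mu_\ell^+$ and $\mu_*^+$ lie in elements whose vertices belong to $\mathcal M_h^{n+1}$, so that $E_{n+1}$ genuinely controls the nodal part. This follows from $\mathcal D_*\subset\Omega_{n+1,h}$, given by \eqref{separation}, together with the definition of $\mathcal T_h^{n+1}$.
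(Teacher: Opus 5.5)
Your proposal is correct and follows essentially the paper's argument: the same split of the next-layer reconstruction error into a nodal part bounded by $E_{n+1}$ and an exact-interpolant part controlled through \eqref{ilip} on $\mathcal D_*\subset\Omega_{n+1,h}$, combined with the resolvent bound and the local Lipschitz estimates on $\mathcal K_K$. The only difference is the order of elimination: the paper first bounds $\norm{e_\mu}$ by $2\tau L_{\mathcal A}\norm{e_u}+C\tau^2$ and inserts this into the value inequality, whereas you first eliminate $\norm{e_u}$ into the density inequality; this change is immaterial.
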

\begin{proof} 
At the fixed node $\mu^{\ell}$, write $e_u:=u_\ell^n-u_*^n$ and 
$e_\mu:=\mu_\ell^+-\mu_*^+$. 
Since $\mu^\ell\in\mathcal M_h^n\subset\Omega_{n,h}$,
Lemma~\ref{residual} applies with $\mu=\mu^\ell$. Subtracting
\eqref{rhores} from
\eqref{schemerho} implies 
\begin{equation*}
e_\mu
=\tau\bigl[ \mathcal A(\mu_\ell^+,u_\ell^n)
-\mathcal A(\mu_*^+,u_*^n) \bigr]-r_\mu^n.
\end{equation*}
By the choice of the localized solution,
$\mu_\ell^+,\mu_*^+\in\mathcal D_*\subset\Omega_{n+1,h}
\subset\mathcal Q_K$ and
$\norm{u_\ell^n-u_*^n}\le1$.  Since 
$\norm{u_*^n}\le R-2$, both pairs
$(\mu_\ell^+,u_\ell^n)$ and $(\mu_*^+,u_*^n)$ belong to
$\mathcal K_K$, where we recall that $R$ and $\mathcal K_K$ are given in \eqref{localcylinder}.  Hence
\eqref{localalip} is applicable to the difference above.  Together
with Lemma~\ref{residual}, it implies 
$(1-\tau L_{\mathcal A})\norm{e_\mu}
\le\tau L_{\mathcal A}\norm{e_u}+C\tau^2$.
Choose $\tau$ so that
$\tau L_{\mathcal A}\le1/2$.  Then
\begin{equation}\label{rhofirst}
 \norm{e_\mu}
 \le2\tau L_{\mathcal A}\norm{e_u}+C\tau^2.
\end{equation}
Both $\mu_\ell^+$ and $\mu_*^+$ belong to the convex ball
$\mathcal D_*\subset\Omega_{n+1,h}$. Applying the elementwise estimate
\eqref{ilip} along the segment joining these two points gives 
\begin{align}\label{Lip-bound}
 \norm{\bigl(\mathcal I_{h,\epsilon}P_{h,\epsilon}
 u(t_{n+1},\cdot)\bigr)(\mu_\ell^+)
 -\bigl(\mathcal I_{h,\epsilon}P_{h,\epsilon}
 u(t_{n+1},\cdot)\bigr)(\mu_*^+)}
 \le C_I\norm{D_\mu u}_{L^\infty([0,T]\times\mathcal Q_K)}
 \norm{\mu_\ell^+-\mu_*^+}, 
\end{align}where $C_I$ is the interpolation constant in Lemma~\ref{interpolation}.
Choose $\mathsf S_\ell^+\in\mathcal T_h^{n+1}$ with
$\mu_\ell^+\in\mathsf S_\ell^+$, and denote its vertices by
$\mu^{\ell_1},\ldots,\mu^{\ell_d}$. Such an active element
exists because $\mu_\ell^+\in\mathcal D_*\subset\Omega_{n+1,h}$ and
$\Omega_{n+1,h}$ is the union of the elements in
$\mathcal T_h^{n+1}$; hence all its vertices belong to
$\mathcal M_h^{n+1}$. Then
\eqref{interpolant} and the definition of $E_{n+1}$ imply
\begin{align*}
\norm{\mathcal I_{h,\epsilon} u^{n+1}_h (\mu_\ell^+)
-\bigl(\mathcal I_{h,\epsilon}P_{h,\epsilon}
u(t_{n+1},\cdot)\bigr)(\mu_\ell^+)}\le\sum_{r=1}^d\lambda_r(\mu_\ell^+)
 \norm{u_h^{n+1}(\mu^{\ell_r})
       -u(t_{n+1},\mu^{\ell_r})}
 \le E_{n+1}.
\end{align*} 
Combining this estimate with \eqref{Lip-bound}, we obtain
\begin{align}
 &\norm{ \mathcal I_{h,\epsilon} u^{n+1}_h(\mu_\ell^+)
 - \bigl(\mathcal I_{h,\epsilon}P_{h,\epsilon}
 u(t_{n+1},\cdot)\bigr)(\mu_*^+)}
 \le E_{n+1}+{C_I\norm{D_\mu u}_{L^\infty([0,T]\times\mathcal Q_K)}}
 \norm{e_\mu}.
 \label{split}
\end{align}

Subtracting \eqref{ures} from
\eqref{schemeu} gives the exact identity
\begin{align*}
 (I-\tau\DeltaG)e_u
 &=\mathcal I_{h,\epsilon} u^{n+1}_h (\mu_\ell^+)
 -\bigl(\mathcal I_{h,\epsilon}P_{h,\epsilon}
 u(t_{n+1},\cdot)\bigr)(\mu_*^+)\\&\quad -\tau\bigl[
 \bm H(\mu_\ell^+,\gradG u_\ell^n)
 -\bm H(\mu_*^+,\gradG u_*^n)
 \bigr]-r_u^n.
\end{align*}
Apply Lemma~\ref{resolvent},
\eqref{localhlip},
\eqref{split}, and the residual estimate  \eqref{ures} to get
$$\norm{e_u}\le E_{n+1}+{C_I\norm{D_\mu u}_{L^\infty([0,T]\times\mathcal Q_K)}}\norm{e_\mu}
+\tau L_H\bigl(\norm{e_\mu}+\norm{e_u}\bigr)
+C\bigl(\tau^2+h\omega_K(h)\bigr).$$
Inserting \eqref{rhofirst} into this inequality yields \begin{equation}\label{upre}
 (1-q_\tau)\norm{e_u}
 \le E_{n+1}+C\bigl(\tau^2+h\omega_K(h)\bigr),
\end{equation} where $q_\tau=2\tau L_{\mathcal A}(C_I\norm{D_\mu u}_{L^\infty([0,T]\times\mathcal Q_K)}+\tau L_H)+\tau L_H$. 
Reduce
$\tau$ so that $q_\tau\le1/2$ and   then
$
 \frac1{1-q_\tau}=1+\frac{q_\tau}{1-q_\tau}
 \le1+2C\tau, 
$ 
which together with  \eqref{upre} proves
\eqref{ustep}.  By  \eqref{ustep} and \eqref{rhofirst}, we deduce 
\begin{align*}
 \norm{e_\mu}
 &\le2\tau L_{\mathcal A}\bigl[
 (1+C\tau)E_{n+1}
 +C\bigl(\tau^2+h\omega_K(h)\bigr)\bigr]+C\tau^2\le C\tau E_{n+1}
 +C\bigl(\tau^2+\tau h\omega_K(h)\bigr),
\end{align*}
which is \eqref{rhostep}. This finishes the proof. 
\end{proof}
\subsection{Proofs of main results}
This subsection presents proofs of Theorem~\ref{conv}, Corollary~\ref{feedback}, and
Theorem~\ref{actiontheorem}. 
We begin the proof of Theorem~\ref{conv} with a backward induction argument that has two tasks at each time level. The next-layer error controls the derivative of its reconstruction through \eqref{inverse}, and therefore the contraction condition in Lemma~\ref{local}. Once the current update has been constructed, Lemma~\ref{stability} gives its error. 
\begin{proof}[Proof of Theorem~\ref{conv}]
Since $h\to0$ and $\tau^2/h\to0$ in \eqref{coupling}, $\tau\to0$.
Moreover, the continuity of $D_\mu u$ on the compact set
$[0,T]\times\mathcal P_{\epsilon_K}$ implies, by uniform continuity, that
$\omega_K(h)\to0$.
Let $\tau_0>0$ be given in Lemma~\ref{stability}. Fix $C_0>0$ sufficiently large so that the estimates in Lemmas~\ref{residual} and~\ref{stability}
hold with $C=C_0$, and set $C_E:=C_0Te^{C_0T}$.
We fix $C_0$ and $C_E$ before choosing $h_0$ below and keep them unchanged
throughout the induction. Let $\kappa_0,c_0>0$ be given in
Lemma~\ref{local}.  By
\eqref{coupling}, we have $
 \tau+\frac{h\omega_K(h)}{\tau}\to0,\;
 \frac{\tau}{h}\bigl(
 \tau+\frac{h\omega_K(h)}{\tau}\bigr)
 =\frac{\tau^2}{h}+\omega_K(h)\to0.
$  Hence $h_0$ can be chosen so that, for $h\le h_0$, we have $\tau\le\min\{1,\tau_0,\tau_K\},$ \begin{align}\label{small}
& C_E\bigl(\tau+\frac{h\omega_K(h)}{\tau}\bigr)
 \le\frac12,\quad 
 \tau\bigl(1+{C_I\norm{D_\mu u}_{L^\infty([0,T]\times\mathcal Q_K)}}\bigr)+C_IC_E\bigl(
 \frac{\tau^2}{h}+\omega_K(h)\bigr)\le\kappa_0,\notag\\
 &C_0\tau+C_0\bigl(\tau^2+h\omega_K(h)\bigr)
 +C_E\bigl(\tau+\frac{h\omega_K(h)}{\tau}\bigr)\le c_0.
\end{align}

We use backward induction to prove, at each time level, the well-posedness of the numerical
solution and its 
error bound. The induction assertion at level $n$ is
\begin{equation}\label{induction}
 u_h^k\ \text{is constructed},
 \qquad
 E_k\le C_E\bigl(\tau+\frac{h\omega_K(h)}{\tau}\bigr),
 \qquad k=n,\ldots,N.
\end{equation}
Since $u_h^N(\mu^\ell)=g(\mu^\ell)=u(T,\mu^\ell)$ at every terminal
node, $E_N=0$, so the assertion holds for $n=N$.

Fix $0\le n<N$ and assume that the assertion \eqref{induction} holds for $n+1$. Thus  the layers $u_h^k$, $n+1\le k\le N$, have been constructed and
$E_k\le C_E\bigl(\tau+h\omega_K(h)/\tau\bigr)$ for these indices.
To construct layer $n$ on the active nodes, we apply Lemma \ref{local} and
verify its conditions.  Let $\mathcal Q_K$ and $\Omega_{n,h}$ be given by Proposition~\ref{tube}.  
Since
$\mathcal M_h^{n+1}\subset\Omega_{n+1,h}\subset\mathcal Q_K$,
one has
$
\max_{\mu^\ell\in\mathcal M_h^{n+1}}
 \norm{u(t_{n+1},\mu^\ell)}
 \le\norm{u}_{L^\infty([0,T]\times\mathcal Q_K)}=R-2.
$
By the definition of the nodal restriction $ P_{h,\epsilon}$, for every
$\mu^\ell\in\mathcal M_h^{n+1}$,
\[
 \norm{u_h^{n+1}(\mu^\ell)
 - P_{h,\epsilon}u(t_{n+1},\mu^\ell)}
 =\norm{u_h^{n+1}(\mu^\ell)-u(t_{n+1},\mu^\ell)}
 \le E_{n+1}.
\] 
Since every element $S\in\mathcal T_h^{n+1}$ is contained in
$\Omega_{n+1,h}\subset\mathcal Q_K$, the elementwise forms of
\eqref{istab}, \eqref{ilip}, and \eqref{inverse}, together with the
linearity of $\mathcal I_{h,\epsilon}$, give
$ \norm{\mathcal I_{h,\epsilon} P_{h,\epsilon}u(t_{n+1},\cdot)}_{L^\infty(\Omega_{n+1,h})}\le R-2,$
\begin{align}
& \max_{S\in\mathcal T_h^{n+1}}
 \norm{D_\mu (\mathcal I_{h,\epsilon} P_{h,\epsilon}u(t_{n+1},\cdot))}_{L^\infty(S)}
 \le {C_I\norm{D_\mu u}_{L^\infty([0,T]\times\mathcal Q_K)}},\notag\\
 %\label{inductexactlip}\\
& \norm{\mathcal I_{h,\epsilon}u^{n+1}_h-\mathcal I_{h,\epsilon} P_{h,\epsilon}u(t_{n+1},\cdot)}_{L^\infty(\Omega_{n+1,h})}
 \le E_{n+1},
\label{inductfieldlip_2}\\ 
 &\max_{S\in\mathcal T_h^{n+1}}
 \norm{D_\mu( \mathcal I_{h,\epsilon}u^{n+1}_h-\mathcal I_{h,\epsilon} P_{h,\epsilon}u(t_{n+1},\cdot))}_{L^\infty(S)}
 \le C_Ih^{-1}E_{n+1}.\notag
 %\label{inductfieldlip_1}
\end{align}
The triangle inequality and the induction hypothesis therefore verify
\eqref{bootstrap} with
\[
 \Lambda:={C_I\norm{D_\mu u}_{L^\infty([0,T]\times\mathcal Q_K)}}+C_Ih^{-1}E_{n+1}
 \le {C_I\norm{D_\mu u}_{L^\infty([0,T]\times\mathcal Q_K)}}+C_IC_Eh^{-1}\bigl(\tau+\frac{h\omega_K(h)}{\tau}\bigr).
\]
It follows from the inequality involving $\kappa_0$ in \eqref{small} that 
\begin{align*}
 \tau(1+\Lambda)
\le
\tau\bigl(1+{C_I\norm{D_\mu u}_{L^\infty([0,T]\times\mathcal Q_K)}}\bigr)+C_IC_E
 \bigl(\frac{\tau^2}{h}+\omega_K(h)\bigr)
 \le\kappa_0.
\end{align*}

For each $\mu^\ell\in\mathcal M_h^n$, use the exact reference pair
$(\mu_*^+,u_*^n)$ introduced in Lemma~\ref{local}. Since
$\mathcal M_h^n\subset\Omega_{n,h}$, Lemma~\ref{residual} applies with
$\mu=\mu^\ell$ and provides $r_\mu^n,r_u^n$ in
\eqref{rhores}--\eqref{ures}. Define 
\begin{equation}
 \widehat r_u^n:=r_u^n
 +\bigl(\mathcal I_{h,\epsilon}P_{h,\epsilon}
 u(t_{n+1},\cdot)\bigr)(\mu_*^+) 
 -{\bigl(\mathcal I_{h,\epsilon}u_h^{n+1}\bigr)(\mu_*^+)}.
\end{equation}
Then \eqref{rhores} and \eqref{ures} are exactly
\eqref{referencerho}--\eqref{referenceu} with
$r_\rho=r_\mu^n$ and $r_z=\widehat r_u^n$.
Since $\mu_*^+\in\Omega_{n+1,h}$, \eqref{inductfieldlip_2} and
Lemma~\ref{residual} yield
\[
 \norm{\widehat r_u^n}
 \le C_0\bigl(\tau^2+h\omega_K(h)\bigr)+E_{n+1},
 \qquad \norm{r_\mu^n}\le C_0\tau^2.
\]
The induction hypothesis \(E_{n+1}\le C_E(\tau+\frac{h\omega_K(h)}{\tau})\) and the last inequality in \eqref{small} ensure  
\[
 \norm{r_\mu^n}+\tau\norm{\widehat r_u^n}
 \le\tau\Bigl[C_0\tau+C_0\bigl(\tau^2+h\omega_K(h)\bigr)
 +C_E\bigl(\tau+\frac{h\omega_K(h)}{\tau}\bigr)\Bigr]
 \le c_0\tau.
\]
Thus the conditions in \eqref{smalllocal} hold.
We have verified all the conditions of Lemma~\ref{local}, and  thus there exists a unique pair
$(\mu_\ell^+,u_\ell^n)$ at every node $\mu^\ell\in\mathcal M_h^n$ and the
values $u_\ell^n$ define the layer $u_h^n$.
By the definition of $\mathcal D_*$ in Lemma~\ref{local}, for every
$\mu^{\ell}\in\mathcal M_h^n$,
$\mu_\ell^+\in\Omega_{n+1,h}$ and
$(\mu_\ell^+,u_\ell^n)\in\mathcal K_K$.
Applying Lemma~\ref{stability} at level $n$ and taking the maximum in \eqref{ustep} over
$\mu^\ell\in\mathcal M_h^n$ gives
\begin{equation}\label{recursion}
 E_n\le(1+C_0\tau)E_{n+1}
 +C_0\bigl(\tau^2+h\omega_K(h)\bigr).
\end{equation}
By the induction construction, the same recursion already holds at the
levels $k=n+1,\ldots,N-1$.  Starting from $E_N=0$ and iterating
\eqref{recursion}, we deduce  
\begin{align*}
 E_n
 &\le C_0\bigl(\tau^2+h\omega_K(h)\bigr)
 \sum_{j=0}^{N-n-1}(1+C_0\tau)^j\le C_0\bigl(\tau^2+h\omega_K(h)\bigr)
 \frac{T}{\tau}e^{C_0T}
 =C_E\bigl(\tau+\frac{h\omega_K(h)}{\tau}\bigr),
\end{align*} where we use 
$ 
 \sum_{j=0}^{N-n-1}(1+C_0\tau)^j
 \le Ne^{C_0T}=\frac{T}{\tau}e^{C_0T}
 $ and $C_E=C_0Te^{C_0T}$, and the factor \(1/\tau\) results from accumulating the local interpolation error \(h\omega_K(h)\) over \(\mathcal O(\tau^{-1})\) time steps. 
Thus the assertion \eqref{induction} also holds at $k=n$, with the same
constant $C_E$ fixed above.
This completes the induction.  

Finally, the localized interpolation estimates give
\[
 \begin{aligned}
 &\norm{\mathcal I_{h,\epsilon}u_h^n-u(t_n,\cdot)}_{L^\infty(\Omega_{n,h})}\le\norm{\mathcal I_{h,\epsilon}(u_h^n-P_{h,\epsilon}u(t_n,\cdot))}_{L^\infty(\Omega_{n,h})}
       +C_Ih\omega_K(h)\le E_n+C_Ih\omega_K(h).
 \end{aligned}
\]
Together with \eqref{induction}, this proves \begin{equation}\label{activeerror}
 \max_{0\le n\le N}
 \norm{\mathcal I_{h,\epsilon}u_h^n-u(t_n,\cdot)}_{L^\infty(\Omega_{n,h})}
 \le C_K\bigl(\tau+\frac{h\omega_K(h)}{\tau}\bigr)+C_Ih\omega_K(h) .
\end{equation} 
and hence \eqref{error}, since $K\subset\Omega_{n,h}$ and $\tau\le1$.
\end{proof}

\begin{proof}[Proof of Corollary~\ref{feedback}] 
For brevity, write
$\rho(t):=\rho(t;0,\mu_0)$ and $\phi(t):=\phi(t;0,\mu_0)$. 
Since
$\mu_0\in K$, one has
$\rho(t_n) \in\Gamma(t_n)\subset\Omega_{n,h}$.  By the construction in the proof
of Proposition~\ref{tube}, there is some $\sigma_K>0$ (e.g. $\sigma_K:=\frac12r_0e^{-\gamma\tau_K/2}\in (0, e^{-\gamma\tau/2}r(t_n))$),    
such that for all sufficiently
small $h$,
\begin{equation}\label{exactmargin}
 \{q\in\bbR_1^d:\operatorname{dist}(q,\Gamma(t_n))
 \le\sigma_K\}\subset\Omega_{n,h},
 \qquad 0\le n\le N.
\end{equation}
Suppose that
$\widehat\rho_h^n$ has been constructed and set
$e_n:=\norm{\widehat\rho_h^n-\rho(t_n)}$. 
If $e_n\le\sigma_K$, then \eqref{exactmargin} gives $\widehat\rho_h^n\in\Omega_{n,h}$, and the value
$\widehat\phi_h^n=(\mathcal I_{h,\epsilon}u_h^n)(\widehat\rho_h^n)$ is defined. By \eqref{activeerror} and \eqref{localcylinder}, $\norm{\widehat\phi_h^n}<R-1$ for sufficiently small $h$.
To solve \eqref{feedbackstep}, set
$\rho_*^{n+1}:=\Phi_{t_n,t_{n+1}}(\widehat\rho_h^n)$ and consider $
 \mathcal R_n(q):=\widehat\rho_h^n
 +\tau\mathcal A(q,\widehat\phi_h^n)
$
on the closed ball 
$ 
\bbR_1^d\cap
 \overline B(\rho_*^{n+1},c_K\tau/2),
$ which is contained in $\Omega_{n+1,h}$ 
by \eqref{separation}. 
Lemma~\ref{residual}, applied with
$\mu=\widehat\rho_h^n$, leads to 
\[
 \rho_*^{n+1}=\widehat\rho_h^n
 +\tau\mathcal A\bigl(\rho_*^{n+1},
 u(t_n,\widehat\rho_h^n)\bigr)+r_*^n,
 \qquad \norm{r_*^n}\le C\tau^2.
\]
Since
$\rho_*^{n+1},q,\widetilde q\in \bbR_1^d\cap
 \overline B(\rho_*^{n+1},c_K\tau/2) \subset\mathcal Q_K$ and
$\widehat\phi_h^n,u(t_n,\widehat\rho_h^n)\in\overline B_R(0)$,
the pairs occurring below belong to $\mathcal K_K$, and hence
\eqref{localalip} applies.  The definition of $\mathcal R_n$ and the
residual identity above imply
\begin{align*}
 \mathcal R_n(\rho_*^{n+1})-\rho_*^{n+1}
 =\tau\Bigl[
 \mathcal A(\rho_*^{n+1},\widehat\phi_h^n)
 -\mathcal A\bigl(\rho_*^{n+1},
 u(t_n,\widehat\rho_h^n)\bigr)\Bigr]-r_*^n.
\end{align*}
Since $\widehat\phi_h^n=(\mathcal I_{h,\epsilon}u_h^n)(\widehat\rho_h^n)$, estimate 
\eqref{activeerror} yields  
\[
 \begin{aligned}
 \norm{\mathcal R_n(\rho_*^{n+1})-\rho_*^{n+1}}\le\tau L_{\mathcal A}\norm{(\mathcal I_{h,\epsilon}u_h^n)(\widehat\rho_h^n)-u(t_n,\widehat\rho_h^n)}+\norm{r_*^n}\le\tau L_{\mathcal A}C_K\bigl(\tau+\frac{h\omega_K(h)}{\tau}\bigr)+C\tau^2.
 \end{aligned}
\] 
Moreover, for $q,\widetilde q\in \bbR_1^d\cap
 \overline B(\rho_*^{n+1},c_K\tau/2)$,
\begin{align*}
 \norm{\mathcal R_n(q)-\mathcal R_n(\widetilde q)}
 &=\tau\norm{\mathcal A(q,\widehat\phi_h^n)
 -\mathcal A(\widetilde q,\widehat\phi_h^n)}
 \le\tau L_{\mathcal A}\norm{q-\widetilde q}.
\end{align*} 
For sufficiently small $h$, $\tau L_{\mathcal A}\le1/2$ and
$\tau L_{\mathcal A}C_K\bigl(\tau+\frac{h\omega_K(h)}{\tau}\bigr)
+C\tau^2\le c_K\tau/4$. 
Hence $\mathcal R_n$ maps $\bbR_1^d\cap
 \overline B(\rho_*^{n+1},c_K\tau/2)$ into itself and is a contraction.
Its unique fixed point $\widehat\rho_h^{n+1}$ belongs to
$\bbR_1^d\cap
 \overline B(\rho_*^{n+1},c_K\tau/2)\subset\Omega_{n+1,h}$.  If
$q,\widetilde q\in\Omega_{n+1,h}$ both solve \eqref{feedbackstep}, then
$ 
 \norm{q-\widetilde q}
 \le\tau L_{\mathcal A}\norm{q-\widetilde q}.
$ 
Since $\tau L_{\mathcal A}<1$, one has $q=\widetilde q$.  Thus
$\widehat\rho_h^{n+1}$ is the unique solution of \eqref{feedbackstep} in
$\Omega_{n+1,h}$. 

Lemma~\ref{residual}, applied with $\mu=\rho(t_n)$, leads to  
\[\rho(t_{n+1})
 =\rho(t_n)+\tau\mathcal A\bigl(\rho(t_{n+1}),
 u(t_n,\rho(t_n))\bigr)+r^n,
 \qquad \norm{r^n}\le C\tau^2.
\]
Subtracting this identity from \eqref{feedbackstep} and using
\eqref{localalip} yields
\begin{align*}
 &(1-\tau L_{\mathcal A})e_{n+1}
 \le e_n+\tau L_{\mathcal A}
 \norm{\widehat\phi_h^n-\phi(t_n)}+C\tau^2.
 \end{align*}
Since $\cP^\circ(G)$ is convex and $\mathcal Q_K\Subset\cP^\circ(G)$, its convex hull $\operatorname{conv}(\mathcal Q_K)$ is still a compact subset of $\cP^\circ(G)$.  The segment joining $\widehat\rho_h^n$ and $\rho(t_n)$ therefore stays in this compact convex set, and hence 
\begin{align*}\norm{\widehat\phi_h^n-\phi(t_n)}
&\le\norm{(\mathcal I_{h,\epsilon}u_h^n) (\widehat\rho_h^n)-u(t_n,\widehat\rho_h^n)}
 +\norm{u(t_n,\widehat\rho_h^n)-u(t_n,\rho(t_n))}\\
&\le
C_K\bigl(\tau+\frac{h\omega_K(h)}{\tau}\bigr)+
\norm{D_\mu u}_{L^\infty([0,T]\times\operatorname{conv}(\mathcal Q_K))}e_n.
\end{align*}
For sufficiently small $\tau$,
$e_{n+1}\le(1+C\tau)e_n+
C\tau\bigl(\tau+\frac{h\omega_K(h)}{\tau}\bigr)$. 
Since $\tau+\frac{h\omega_K(h)}{\tau}\to0$, take $h$ sufficiently small that
$C_K\bigl(\tau+\frac{h\omega_K(h)}{\tau}\bigr)<\sigma_K$.
We prove by induction that
$\widehat\rho_h^k$ is constructed and
$e_k\le C_K\bigl(\tau+\frac{h\omega_K(h)}{\tau}\bigr)$
for $0\le k\le n$. At $n=0$, this follows from
$\widehat\rho_h^0=\rho(0)=\mu_0$. If the assertion holds at level $n$, then
$e_n<\sigma_K$, so \eqref{exactmargin} and the contraction argument
above construct $\widehat\rho_h^{n+1}$. Iterating the preceding recurrence gives
\[
 e_{n+1}\le C\tau\bigl(\tau+\frac{h\omega_K(h)}{\tau}\bigr) 
 \sum_{j=0}^{n}(1+C\tau)^j
 \le C_K\bigl(\tau+\frac{h\omega_K(h)}{\tau}\bigr).
\] 
This closes the induction. In particular,
$ 
 \max_{0\le n\le N}\norm{\widehat\rho_h^n-\rho(t_n)}
 \le C_K\bigl(\tau+\frac{h\omega_K(h)}{\tau}\bigr).
$ 
Moreover, for every $0\le n\le N$,
$ \norm{\widehat\phi_h^n-\phi(t_n)}
\le C_K\bigl(\tau+\frac{h\omega_K(h)}{\tau}\bigr)+
 \norm{D_\mu u}_{L^\infty([0,T]\times\operatorname{conv}(\mathcal Q_K))}
\norm{\widehat\rho_h^n-\rho(t_n)}
 \le C_K\bigl(\tau+\frac{h\omega_K(h)}{\tau}\bigr). $
Together with \eqref{activeerror}, this proves \eqref{feedbackerror}
uniformly for $\mu_0\in K$.
\end{proof}

\begin{proof}[Proof of Theorem~\ref{actiontheorem}]
We first verify admissibility of the reconstructed pair.  By \eqref{potentialcoeff}, we have 
$ 
 \mathcal A(\mu,z)
 =-\divG D_p\cH(\mu,-\gradG z)+\DeltaG\mu.
$ 
Consequently, \eqref{feedbackstep} and \eqref{flux} imply
\begin{equation}\label{discretecontinuity}
 \frac{\widehat\rho_h^{n+1}-\widehat\rho_h^n}{\tau}
 +\divG m_h^{n+1}=0.
\end{equation}
By Corollary~\ref{feedback}, 
$\widehat\rho_h^n\in\Omega_{n,h}\subset\mathcal Q_K$ for every $n$.
If
$\epsilon_*:=\min_{\mu\in\mathcal Q_K}\min_i\mu_i>0$, then
$\widehat\rho_h^n\in\cP_{\epsilon_*}$;  hence the convex interpolation in
\eqref{reconstruction} also belongs to
$\cP_{\epsilon_*}\Subset\cP^\circ(G)$. 
Thus \eqref{discretecontinuity} implies
\[
 \dot{\overline\rho}_h+\divG\overline m_h=0
 \quad\text{a.e. on }(0,T),\qquad
 \overline\rho_h(0)=\mu_0.
\]
The exact densities with initial values from $K$ belong to
$\Gamma(t_n)\subset\mathcal Q_K$.  Hence \eqref{feedbackerror} and
\eqref{localcylinder} imply, for all sufficiently small $h$,
$ 
\norm{\widehat\phi_h^n}
 \le \norm{\widehat\phi_h^n-\phi(t_n)}
 +\norm{u(t_n,\rho(t_n))}\le R-1, 
$
where $R$ is defined in
\eqref{localcylinder}.
Together with
$\widehat\rho_h^{n+1}\in\Omega_{n+1,h}\subset\mathcal Q_K$, this yields
\[
 \bigl(\widehat\rho_h^{n+1},-\gradG\widehat\phi_h^n\bigr)
 \in\mathcal Q_K\times
 \bigl\{-\gradG z:\norm{z}\le R-1\bigr\}
 \Subset\cP^\circ(G)\times\bbS^{d\times d}.
\]
Hence, $\overline m_h\in L^\infty((0,T);\bbS^{d\times d})$
and $(\overline\rho_h,\overline m_h)$ is admissible. 

We next estimate the difference between $(\overline\rho_h,\overline m_h)$ and the exact minimizing pair
$(\rho,m^*)$. Set
$w(t):=u(t,\rho(t))$.
 The exact characteristic belongs to
$\Gamma(t)\subset\mathcal Q_K$, and \eqref{localcylinder} implies 
$(\rho(t),w(t))\in\mathcal K_K$. Hence we have 
$ 
 \norm{\dot\rho(t)}
 =\norm{\mathcal A(\rho(t),w(t))}\le C_{\mathcal A}.
$ 
Along the characteristic, \eqref{eq_intro} becomes
$ 
 \dot w(t)=\bm H\bigl(\rho(t),\gradG w(t)\bigr)-\DeltaG w(t),$ which implies $
 \norm{\dot w(t)}\le H_0+\norm{\DeltaG}_{\mathcal L(\bbR^d)}(R-2).$ 
Thus $\rho$ and $w$ are Lipschitz in time, uniformly for
$\mu_0\in K$.
It follows from \eqref{reconstruction} and \eqref{feedbackerror} that
for $t\in[t_n,t_{n+1}]$,
\[
 \begin{split}
 & \norm{\overline\rho_h(t)-\rho(t)}
 \le \frac{t_{n+1}-t}{\tau}
 \norm{\widehat\rho_h^n-\rho(t_n)}
 +\frac{t-t_n}{\tau}
 \norm{\widehat\rho_h^{n+1}-\rho(t_{n+1})}\\
 &+\frac{t_{n+1}-t}{\tau}\norm{\rho(t_n)-\rho(t)}
 +\frac{t-t_n}{\tau}\norm{\rho(t_{n+1})-\rho(t)}
 \le C_K\bigl(
 \tau+\frac{h\omega_K(h)}{\tau}\bigr).
 \end{split}
\]
Consequently,
\begin{equation}\label{patherror}
 \sup_{\mu_0\in K}
 \norm{\overline\rho_h-\rho}_{L^\infty(0,T)}
 \le C_K\bigl(
 \tau+\frac{h\omega_K(h)}{\tau}\bigr).
\end{equation} 
For $t\in[t_n,t_{n+1}]$, the pairs
$
\bigl(\widehat\rho_h^{n+1},-\gradG\widehat\phi_h^n\bigr)$ and $
 \bigl(\rho(t),-\gradG u(t,\rho(t))\bigr)
$ belong to the compact set displayed above. Hence
$D_p\cH$ is Lipschitz on this set. 
For $t\in[t_n,t_{n+1}]$, \eqref{feedbackerror} and the
time-Lipschitz bounds imply
\begin{align*}
 \norm{\widehat\rho_h^{n+1}-\rho(t)}
 &\le\norm{\widehat\rho_h^{n+1}-\rho(t_{n+1})}
 +\norm{\rho(t_{n+1})-\rho(t)}\le C_K\bigl(\tau+
 \frac{h\omega_K(h)}{\tau}\bigr),\\
 \norm{\widehat\phi_h^n-u(t,\rho(t))}
 &=\norm{\widehat\phi_h^n-w(t)}
 \le\norm{\widehat\phi_h^n-w(t_n)}+\norm{w(t_n)-w(t)}
 \le C_K\bigl(\tau+
 \frac{h\omega_K(h)}{\tau}\bigr).
\end{align*} Recalling the exact optimal flux $m^*$ from \eqref{mstar}, 
we obtain
\[
 \begin{aligned}
 \norm{m_h^{n+1}-m^*(t)}_E
 &\le C\Bigl(
 \norm{\widehat\rho_h^{n+1}-\rho(t)}
 +\norm{\gradG(\widehat\phi_h^n-u(t,\rho(t)))}_E\Bigr)
 +\norm{\gradG(\widehat\rho_h^{n+1}-\rho(t))}_E\\
 &\le C_K\Bigl(
 \norm{\widehat\rho_h^{n+1}-\rho(t)}
 +\norm{\widehat\phi_h^n-u(t,\rho(t))}\Bigr)\le C_K\bigl(
 \tau+\frac{h\omega_K(h)}{\tau}\bigr).
 \end{aligned}
\] 
which leads to 
\begin{equation}\label{fluxerror}
 \sup_{\mu_0\in K}
 \norm{\overline m_h-m^*}_{L^\infty(0,T;\bbS^{d\times d})}
 \le C_K\bigl(
 \tau+\frac{h\omega_K(h)}{\tau}\bigr).
\end{equation}

We now use the optimality of $(\rho,m^*)$ and estimates \eqref{patherror}--\eqref{fluxerror} to derive the quadratic
action estimate. 
Put
$ 
\delta\rho_h:=\overline\rho_h-\rho,\;
\delta m_h:=\overline m_h-m^*.$ 
Both pairs $(\overline\rho_h,\overline m_h)$ and $(\rho,m^*)$
are admissible with
$\overline\rho_h(0)=\rho(0)=\mu_0$, and  $
 \dot{\delta\rho}_h+\divG\delta m_h=0,
\; \delta\rho_h(0)=0.
$ 
The compactness of $K$ and continuity of the characteristic flow ensure
\begin{equation}
 \delta_{K,T}:=\min_{\substack{\mu_0\in K,\ 0\le t\le T\\1\le i\le d}}
\rho_i(t)>0.
\end{equation}
By \eqref{patherror}, take $h$ sufficiently small that
$\norm{\delta\rho_h}_{L^\infty(0,T)}\le\delta_{K,T}/2$. Then, for
$-1\le\lambda\le1$,
\[
\rho_i(t)+\lambda\delta\rho_{h,i}(t)
 \ge\delta_{K,T}-\norm{\delta\rho_h}_{L^\infty(0,T)}
 \ge\frac{\delta_{K,T}}{2},
 \qquad 1\le i\le d.
\]
Thus $\rho(t)+\lambda\delta\rho_h(t)\in\cP^\circ(G)$ uniformly in $t$ and
$\mu_0\in K$. For $-1\le\lambda\le1$, define
\[
 \rho_\lambda:=\rho+\lambda\delta\rho_h,
 \qquad m_\lambda:=m^*+\lambda\delta m_h,
 \qquad j_h(\lambda):=\mathcal J_0(\rho_\lambda,m_\lambda).
\]
Each $(\rho_\lambda,m_\lambda)$ is admissible.  Since
$(\rho,m^*)$ minimizes
\eqref{action},
$ 
 j_h(\lambda)\ge j_h(0)$ for $-1\le\lambda\le1.
$ 
The map $j_h$ is differentiable and $0\in(-1,1)$, so $
 j_h'(0)=0.$ 
The identity defining $m^*$ reads
\[
 m^*(t)+\gradG\rho(t)
 =D_p\cH\bigl(\rho(t),-\gradG u(t,\rho(t))\bigr),
 \qquad 0\le t\le T.
\]
The exact arguments on the right-hand side remain in a compact set.  Hence,
for some $M_{K,T}>0$, \eqref{patherror}, \eqref{fluxerror}, and the
boundedness of $\gradG$ imply
\[
 \norm{m^*+\gradG\rho}_{L^\infty(0,T;\mathbb S^{d\times d})}
 \le M_{K,T},\qquad
 \norm{\delta m_h+\gradG\delta\rho_h}_{L^\infty(0,T;\mathbb S^{d\times d})}
 \to0.
\] 
Thus, for $-1\le\lambda\le1$,
\[
 \norm{m_\lambda+\gradG\rho_\lambda}_E
 =\norm{m^*+\gradG\rho
 +\lambda\bigl(\delta m_h+\gradG\delta\rho_h\bigr)}_E
 \le M_{K,T}+1.
\] Together with the lower bound on $\rho_\lambda$, the pairs
$(\rho_\lambda,m_\lambda+\gradG\rho_\lambda)$ belong to the compact set
$\cP_{\delta_{K,T}/2}\times
 \bigl\{q\in\bbS^{d\times d}:
 \norm{q}_E\le M_{K,T}+1\bigr\}$. The second derivatives of
$\cL$, $\cF$, and $\cU_T$ are bounded in this compact set. By the chain rule, we have 
\begin{align*}
 j_h''(\lambda)
 ={}&\int_0^T D^2\cL\bigl(\rho_\lambda,
 m_\lambda+\gradG\rho_\lambda\bigr)
 \Bigl[\bigl(\delta\rho_h,\delta m_h+\gradG\delta\rho_h\bigr),
 \bigl(\delta\rho_h,\delta m_h+\gradG\delta\rho_h\bigr)\Bigr]\mathrm dt\\
 &-\int_0^T D^2\cF(\rho_\lambda)[\delta\rho_h,\delta\rho_h]\mathrm dt
 +D^2\cU_T\bigl(\rho_\lambda(T)\bigr)[\delta\rho_h(T),\delta\rho_h(T)].
\end{align*}
The boundedness of these second derivatives and $2ab\le a^2+b^2$ imply
\begin{align*}
 |j_h''(\lambda)|
 &\le C\int_0^T\Bigl(
 \norm{\delta\rho_h(t)}^2
 +\norm{\delta m_h(t)+\gradG\delta\rho_h(t)}_E^2\Bigr)\,\mathrm dt
 +C\norm{\delta\rho_h(T)}^2\\
 &\le C_K\bigl(
 \tau+\frac{h\omega_K(h)}{\tau}\bigr)^2,
 \qquad 0\le\lambda\le1,
\end{align*}
where the last inequality follows from \eqref{patherror},
\eqref{fluxerror}, and the boundedness of $\gradG$.  Taylor's formula and
$j_h'(0)=0$ give
\[
\mathcal J_0(\overline\rho_h,\overline m_h)
 -\mathcal J_0(\rho,m^*)
 = j_h(1)-j_h(0)=\int_0^1(1-\lambda)j_h''(\lambda)\,\mathrm d\lambda
 \le C_K\bigl(
 \tau+\frac{h\omega_K(h)}{\tau}\bigr)^2.
\]
Since $(\rho,m^*)$ attains the
infimum in \eqref{action},
$\mathcal J_0(\rho,m^*)=\cU(0,\mu_0)$. The admissibility of
$(\overline\rho_h,\overline m_h)$ establishes the lower bound in
\eqref{actionerror}. This finishes the proof.  
\end{proof}

\section{Numerical experiments}\label{numerics}
In this section, we first describe the Picard iteration used to solve the
implicit nodal systems in \eqref{scheme}. We then report convergence
tests for the numerical master field and its feedback-generated MFG
trajectories, together with the
action gap in the potential setting. 
Further tests illustrate the response to population redistribution and changes of graph topology.

\subsection{Picard solver}
In this part, we use the Picard iteration in the numerical experiments to compute the solutions of the implicit nodal systems \eqref{scheme}. 
At the $n$th backward step, let
${\mathcal I_{h,\epsilon}u_h^{n+1}}$ be given. 
For each $\mu^\ell\in\mathfrak M_{h,\epsilon}^\circ$, we attempt to solve the
coupled nodal equations by Picard iteration. 
We initialize
$ 
 \rho^0=\mu^\ell,\; z^0=S_\tau\bigl[(\mathcal I_{h,\epsilon}u_h^{n+1})(\mu^\ell)\bigr].
$ 
Given $(\rho^k,z^k)$, first compute the candidate density and then the value variable:  
\begin{subequations}
\begin{align}
 \widetilde\rho^{k+1}
 &=S_\tau\bigl[\mu^\ell+
 \tau\divG\bm B(\rho^k,\gradG z^k)\bigr],\\
 \rho^{k+1}
 &=\rho^k+s_k(\widetilde\rho^{k+1}-\rho^k),\label{picarddensity}\\
 z^{k+1}
 &=S_\tau\bigl[\mathcal I_{h,\epsilon}u_h^{n+1}(\rho^{k+1})
 -\tau\bm H(\rho^{k+1},\gradG z^k)\bigr],
\end{align}
\end{subequations}
where $k\in\mathbb N_0$ denotes the nonlinear iteration index. Here, if $\widetilde\rho^{k+1}\in\cP_\epsilon^\circ$, set $s_k=1$.
Otherwise, define $\delta\rho^k:=\widetilde\rho^{k+1}-\rho^k$ and choose
\begin{equation}\label{solverfeasible}
 s_k=\min\Big\{1,\,
 \eta\min_{i:\delta\rho_i^k<0}
 \frac{\rho_i^k-\epsilon}{-\delta\rho_i^k}\Big\},
 \qquad 0<\eta<1.
\end{equation}
The density update \eqref{picarddensity}, with the above choice of $s_k$,
preserves unit mass and ensures
$\rho^{k+1}\in\cP_\epsilon^\circ$. 
Indeed, $\sum_i(\divG\bm B)_i=0$ and the mass identity in Lemma~\ref{resolvent} give $\sum_i\widetilde\rho_i^{k+1}=1$; the damped update is a convex combination of two unit-mass vectors.  The choice $0<\eta<1$ in \eqref{solverfeasible} keeps every component strictly above $\epsilon$.  
We terminate the iteration when the residual of the original nodal system satisfies
\begin{equation}
 \begin{aligned}
\mathfrak R_{n,\ell}(\rho,z)
 :=
 \begin{pmatrix}
  \rho-\mu^\ell-\tau\mathcal A(\rho,z)\\
  (I-\tau\DeltaG)z-\mathcal I_{h,\epsilon}u_h^{n+1} (\rho)+\tau\bm H(\rho,\gradG z)
 \end{pmatrix},\quad 
\|\mathfrak R_{n,\ell}(\rho^k,z^k)\|_{\ell^\infty} 
 \le\mathcal R_{\mathrm{tol}}.
 \end{aligned}
\end{equation}
When this criterion is met, we set
$\mu_\ell^+=\rho^k$ and $u_\ell^n=z^k$.

\subsection{Numerical experiments} Now we use the logarithmic-mean quadratic model \eqref{model} with
$\alpha=0.7$ and
$g(\mu)=2(\mu-\mu_{\rm tar})$ to implement the numerical experiments.  All computations use the fixed
truncation parameter $\epsilon=0.01$ and the same triangulation
$\mathfrak T_{h,\epsilon}$ at every time level.  
We set $\eta=0.95$ in 
\eqref{solverfeasible}, and use a residual tolerance of $\mathcal R_{\mathrm{tol}}=10^{-11}$
at every node. In all experiments reported in this section,
every nodal system reached this tolerance within at most 10 Picard
iterations, and every density update used the full step $s_k=1$.

\textbf{Convergence test.}
We use the triangle graph with
$\omega_{12}=1$, $\omega_{23}=1.2$, $\omega_{31}=0.8$, and
$\mu_{\rm tar}=(0.15,0.25,0.60)^T$.  We take $T=0.2$ and
$N=8,16,32,64,128$, and couple the spatial refinement to the time grid by
$J=55N/4$.  We additionally compute the level $(J,N)=(3520,256)$ for the final
double-grid comparison.  For this mesh, $h=\sqrt{2}(1-3\epsilon)/J$,
so $h/\tau=4\sqrt{2}(1-3\epsilon)/(55T)\approx0.499$ at every refinement level. Both 
$h$ and $\tau$ are halved at each refinement.  Since the exact master field is not
known, we compare consecutive grids.

Let $u_h^n$ be the computed nodal
field on the mesh of size $h$,
take $K=\cP_{0.15}$, and let $\mathcal X\subset K$ be the fixed sampling
lattice obtained by dividing each edge of $K$ into $24$ equal parts.  We report
\[
 E^{\max}_{h,h/2}
 =\max_{0\le n\le N}
 \max_{\mu\in\mathcal X}
 \norm{\mathcal I_{h,\epsilon}u_h^n(\mu)
 -\mathcal I_{h/2,\epsilon}u_{h/2}^{2n}(\mu)}. 
\] 
Table~\ref{table1} reports $\log_2(E^{\max}_{2h,h}/E^{\max}_{h,h/2})$ with observed rates close to one, which shows first-order convergence along the chosen refinement path.  
The level $(J,N)=(3520,256)$ supplies the finer master field in
the last double-grid comparison.

We next examine the convergence behavior predicted by Corollary~\ref{feedback} and
Theorem~\ref{actiontheorem}.  We use the same five pairs $(J,N)$ and the same $T$,
and take the initial compact set $K=\cP_{0.10}$.  The fixed lattice obtained by dividing every edge
of $K$ into six equal parts contains $28$ initial densities.  For each
$\mu^{(k)}$, we discretize the MFG system \eqref{mfg} independently of the
numerical master field by the time-staggered scheme in
\cite[Section~3]{CuiDangGraphMFG2026}. Our finest reference uses
$N_{\rm ref}=2048$ time steps and is denoted by
$(\rho_{\rm ref}^{(k)},\phi_{\rm ref}^{(k)})$. 
On the reference time grid, we define the reference feedback flux by the analogue of \eqref{flux}, and reconstruct $\overline\rho_{\rm ref}^{(k)}$ piecewise linearly and $\overline m_{\rm ref}^{(k)}$ piecewise constantly by the analogue of \eqref{reconstruction}. 
For each $h$, let
$(\rho_h^{(k),n},\phi_h^{(k),n})$ be the numerical feedback trajectory
generated from $u_h$ by \eqref{feedbackstep}.  We set
\[
 E(h):=\max_{1\le k\le28}\max_{0\le n\le N}
 \bigl(\|\rho_h^{(k),n}-\rho_{\rm ref}^{(k)}(t_n)\|
 +\|\phi_h^{(k),n}-\phi_{\rm ref}^{(k)}(t_n)\|\bigr).
\]
For each trajectory, the reconstructed density is piecewise affine in
time, while the logarithmic-mean action density depends nonlinearly on
the density. We therefore evaluate the time integral in
\eqref{action} for the reconstructed pair \eqref{reconstruction} using
four-point Gauss--Legendre quadrature on each time interval, and define
the reference action difference 
\[
 G_{\rm ref}(h):=\max_{1\le k\le28}
 \bigl\{\mathcal J_0(\overline\rho_h^{(k)},
 \overline m_h^{(k)})
 -\mathcal J_0(\overline\rho_{\rm ref}^{(k)},
 \overline m_{\rm ref}^{(k)})\bigr\}. 
\] Since the exact minimum $\cU(0,\mu^{(k)})$ is unavailable,
$G_{\rm ref}(h)$ is a computable proxy for the action gap, obtained by
replacing $\cU(0,\mu^{(k)})$ with the action of the corresponding
independently computed reference trajectory using
$N_{\rm ref}=2048$ time steps.  
Table~\ref{table1}
shows approximately first-order decay of $E(h)$ and second-order decay of
$G_{\rm ref}(h)$.   These observations are consistent with 
Corollary~\ref{feedback} and Theorem~\ref{actiontheorem}. All computed arrival densities remained in $\mathcal P_{0.01}$.

\begin{table}[htbp]
\caption{Convergence of the three-state master equation and its numerical
feedback.}
\label{table1}
\centering
\small
\setlength{\tabcolsep}{4pt}
\begin{tabular}{ccccccc}
\toprule
$h$ & $E^{\max}_{h,h/2}$ & rate & $E(h)$ & rate & $G_{\rm ref}(h)$ & rate\\
\midrule
$1.247\times10^{-2}$ & $7.575\times10^{-3}$ & -- & $3.028\times10^{-2}$ & -- & $5.964\times10^{-4}$ & --\\
$6.235\times10^{-3}$ & $3.842\times10^{-3}$ & $0.979$ & $1.529\times10^{-2}$ & $0.985$ & $1.525\times10^{-4}$ & $1.967$\\
$3.118\times10^{-3}$ & $1.933\times10^{-3}$ & $0.991$ & $7.639\times10^{-3}$ & $1.001$ & $3.857\times10^{-5}$ & $1.984$\\
$1.559\times10^{-3}$ & $9.694\times10^{-4}$ & $0.996$ & $3.772\times10^{-3}$ & $1.018$ & $9.688\times10^{-6}$ & $1.993$\\
$7.794\times10^{-4}$ & $4.854\times10^{-4}$ & $0.998$ & $1.829\times10^{-3}$ & $1.045$ & $2.419\times10^{-6}$ & $2.002$\\
\bottomrule
\end{tabular}
\end{table}

\textbf{Density feedback on the triangle graph.}
We next visualize the density velocity generated by the numerical master
field for the triangle graph with $d=3$.  We take $T=6$,
$h=4.287\times10^{-2}$, and $N=960$. 
Figure~\ref{velocity} displays the density-velocity field
$
 \mu\mapsto
 \mathcal A\bigl(\mu,\mathcal I_{h,\epsilon}u_h^n(\mu)\bigr)
$ 
generated by the computed master feedback. The arrows indicate the
velocity direction, the colors encode its magnitude, and the marked
point identifies the sampled density with the smallest speed. Unlike a
single characteristic trajectory, each panel shows the feedback
direction and speed over the computational probability simplex. At each displayed
time, the vectors near the boundary point toward the interior,
displaying an inward tendency of the feedback density dynamics.

\begin{figure}[htbp]
\centering
\includegraphics[width=.42\textwidth]
{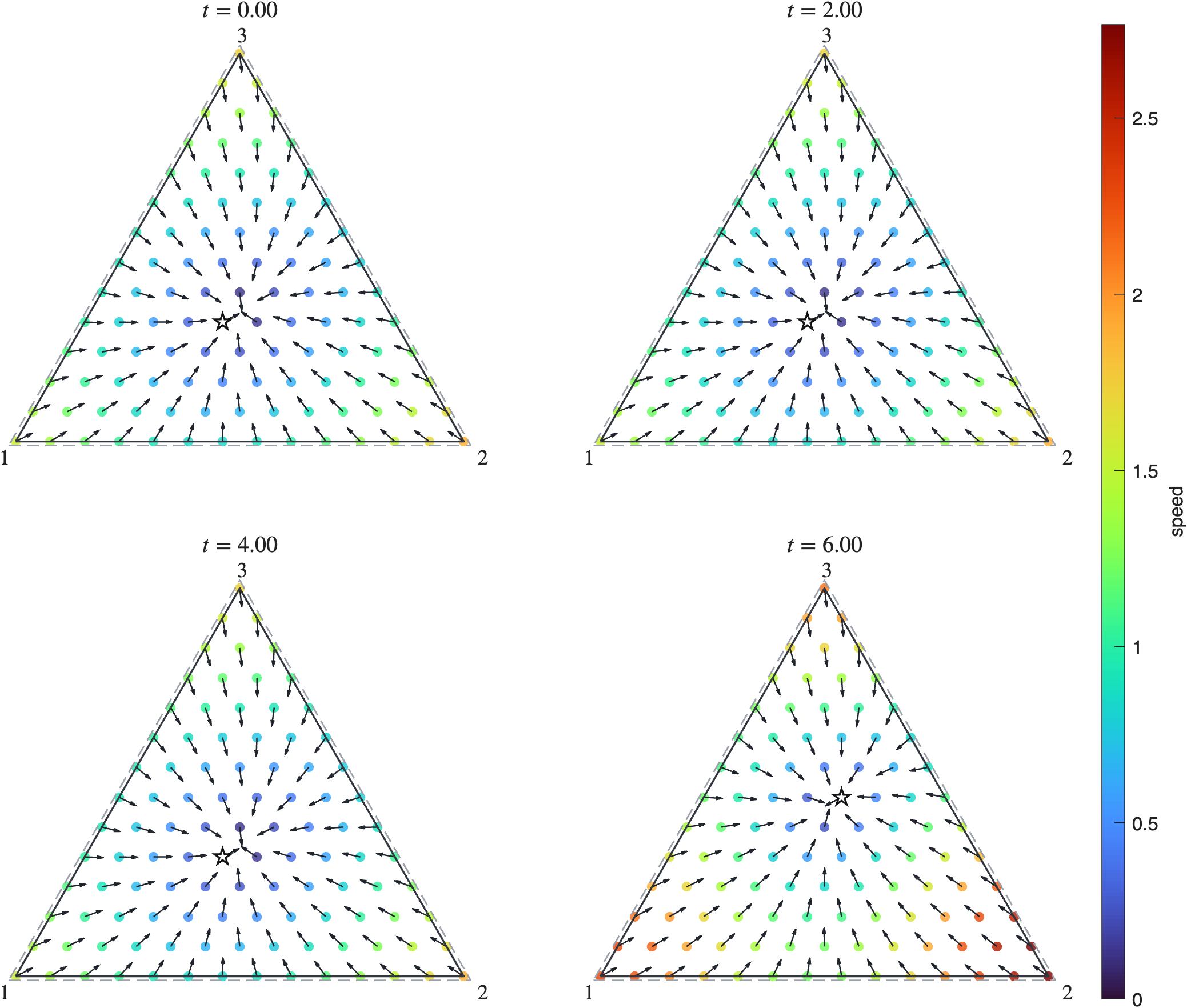}
\caption{The numerical density velocity
$\mathcal A(\mu,\mathcal I_{h,\epsilon}u_h^{n} (\mu) )$ on the triangle graph at the indicated grid times $t_n$, where the arrows indicate the direction and the colors encode the speed.  
The marked point is the sampled
minimum-speed density.}
\label{velocity}
\end{figure}

\textbf{Response to a redistribution shock and the effect of graph topology.}
In this part,
we first illustrate the feedback reuse discussed in Section~\ref{role}
by examining how the numerical master feedback propagates a density
trajectory after an instantaneous redistribution. We then compare the
post-redistribution recovery across different graph topologies. 
The triangle graph, target density, and
model parameters are the same as in the preceding $d=3$ experiment. We take $T=5$,
$h=4.287\times10^{-2}$ and $N=800$, and start from 
$\mu=(0.24,0.30,0.46)^T$.  At $t=2$, mass $0.12$ is transferred from
vertex $3$ to vertex $1$, after which the master feedback is evaluated at the
perturbed density.  Figure~\ref{shock} shows that the perturbed
and unperturbed trajectories become nearly indistinguishable, with their
distance decreasing from $0.170$ to $3.29\times10^{-5}$. The approximately linear decay of this distance on the logarithmic scale indicates a restoring response under the computed master feedback. 

\begin{figure}[htbp]
\centering
\includegraphics[width=.6\textwidth]
{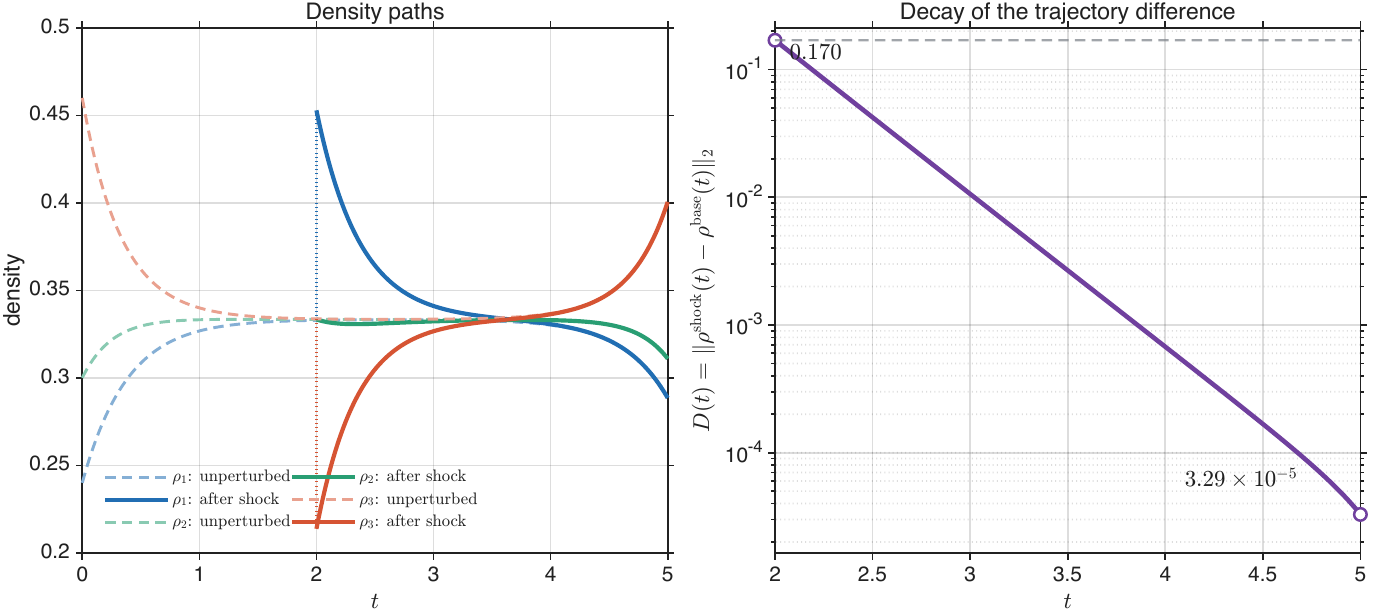}
\caption{Feedback after a redistribution shock.  The left panel compares the perturbed and unperturbed
density paths; the right panel plots their distance on a logarithmic scale.}
\label{shock}
\end{figure}

We next vary the graph topology and investigate the effect of graph topology on recovery under the master feedback.  We take $d=4$, $T=2$,
$h=1.358\times10^{-1}$, and $N=100$,
and consider the
path $P_4$, the star $S_4$, the cycle $C_4$, and the
complete graph $K_4$, with unit weight on every edge, as shown in Figure \ref{topology}.   We use the same
$\mu_0=(0.55,0.15,0.15,0.15)^T$, target
$\mu_{\rm tar}=(0.10,0.15,0.20,0.55)^T$, and model parameters for all four
graphs.  Thus the simplex and its mesh are unchanged, and 
only the graph operators and the resulting master field differ. 
At $t=0.8$, we transfer mass $0.08$ from vertex $4$ to vertex $1$ and
continue the perturbed path with the corresponding master feedback.
For each graph $G$, let $\rho_G^{\rm base}$ and $\rho_G^{\rm shock}$ denote
the unperturbed and perturbed density paths, respectively, and set
$D_G(t):=\norm{\rho_G^{\rm shock}(t)-\rho_G^{\rm base}(t)}$, $0.8\le t\le2$.
Figure~\ref{topology} compares the normalized difference
$D_G(t)/D_G(0.8)$.  The left panel displays its decay on a logarithmic scale.  The right panel reports the
empirical recovery rate
$-\log\bigl(D_G(2)/D_G(0.8)\bigr)/(2-0.8)$ against
$\lambda_2(-\Delta_G)$. Here, $\lambda_2(-\Delta_G)$ is the smallest positive eigenvalue of $-\Delta_G$, also
called the algebraic connectivity of $G$.  A larger $\lambda_2(-\Delta_G)$
indicates stronger connectivity of the graph.
For these four test graphs, Figure \ref{topology} shows that the recovery rate increases with
$\lambda_2(-\Delta_G)$, suggesting a positive association between larger algebraic connectivity and
faster recovery. A theoretical characterization of how graph topology affects the
recovery rate is an interesting direction for future work.

\begin{figure}[htbp]
\centering
\includegraphics[width=.6\textwidth]
{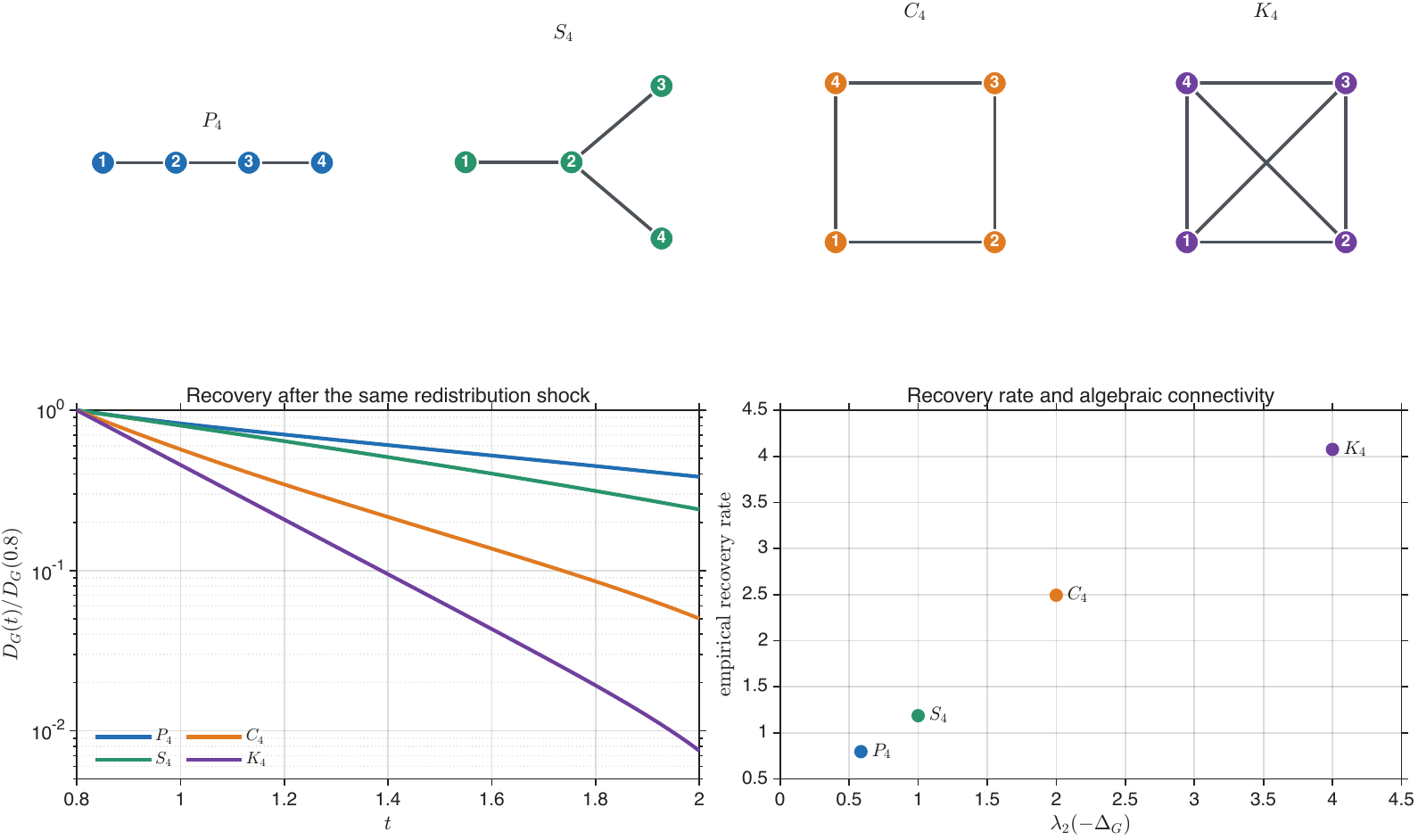}
\caption{Effect of graph topology on recovery under the master feedback,
with unit weight on every displayed edge.  The lower left panel shows the recovery curves after the same
redistribution shock; the lower right panel compares their empirical recovery
rates with the algebraic connectivity.}
\label{topology}
\end{figure}

\section{Conclusion}

In this paper, we study a fully discrete semi-Lagrangian approximation of the
graph master equation with individual noise. The analysis shows that the
approximation can be controlled on compact regions of the open
probability simplex, where the coefficients of the master equation remain uniformly regular.
The characteristic localization keeps the active numerical regions away from the simplex boundary and allows the off-grid reconstruction and nonlinear updates to be treated simultaneously.
The resulting error control propagates from the master field to the induced
MFG trajectories. In the potential setting, it further
translates into a higher-order estimate in the associated
planning problem. 

Several directions merit further investigation. 
First, extending the convergence analysis toward the whole probability
simplex, where the coefficients may degenerate at the boundary, would complement the present convergence 
 result. Second, 
developing efficient numerical methods for master equations on graphs with a larger number of vertices, such as sparse-grid or low-rank
methods \cite{EigelSchneiderSommer2023}, is another
natural direction. 
Moreover, higher-order discretizations \cite{BokanowskiPicarelliReisinger2018}  and alternative structure-preserving spatial approximations are
also of interest, particularly those that retain density interiority and
stability while preserving Lasry--Lions monotonicity at the fully discrete
level.  

\iffalse
\vspace{2mm}
\textbf{Declaration on AI use.} AI-assisted tools were used to improve the English and
presentation and to assist with numerical experiments. All mathematical statements,
proofs, implementations, references, and computational results were independently
checked by the authors, who take responsibility for the content. 
\fi

\appendix
\section{}\label{appendixproofs}

\begin{proof}[Proof of Proposition~\ref{regularity}]
We split the proof into two steps. We first prove the well-posedness and monotonicity \eqref{umono} of the classical master equation based on \cite{GangboMunozWuZhang2026}. We then show the $\mathcal C^2$-regularity and \eqref{dulip} for the solution. 

\textit{Step 1:  Well-posedness of the classical master solution and its $\mathcal C^1$-regularity.} The differential condition \eqref{dll} implies the strict
Lasry--Lions inequality required in
\cite[(3.5)]{GangboMunozWuZhang2026}.  Indeed, for
$(\mu,p)\ne(\nu,\widetilde p)$, set
$(\mu_s,p_s)=(\nu+s(\mu-\nu),\widetilde p+s(p-\widetilde p))$.  Then
\begin{align}\label{integrated-dll}
 \ip{\bm H(\mu,p)-\bm H(\nu,\widetilde p)}{\mu-\nu}
 -\ip{\bm B(\mu,p)-\bm B(\nu,\widetilde p)}{p-\widetilde p}_E
 =\int_0^1\mathcal M_{\mu_s,p_s}(\mu-\nu,p-\widetilde p)\,\mathrm ds<0.
\end{align} 
Together with \eqref{data}, the mobility bound
\eqref{boundary}, and the terminal condition \eqref{terminal},  
the arguments of
\cite[Propositions~3.5--3.6 and Remark~3.7]{GangboMunozWuZhang2026}
can be applied here. 
For each fixed interior initial density and bounded families of continuous
value paths, the positivity estimate in 
\cite[Theorem~1.1]{GangboMunozWuZhang2026}
keeps the density paths in a common compact subset of the open simplex.
Thus the interior regularity assumptions, together with the bound on $g$,
suffice for the classical master solution and its $\mathcal C^1$-regularity.

We next prove \eqref{umono}. Fix $t\in[0,T]$ and
$\mu,\nu\in\cP^\circ(G)$. Let $(\rho,\phi)$ and $(\sigma,\psi)$ be the
solutions of \eqref{mfg} with $\rho(t)=\mu$ and $\sigma(t)=\nu$,
respectively.
Using \eqref{mfg}, \eqref{ibp}, and the symmetry of $\DeltaG$, we obtain
\begin{align}\label{continuousidentity}
 \frac{\mathrm d}{\mathrm ds}
 \ip{\phi-\psi}{\rho-\sigma}
 &=\ip{\bm H(\rho,\gradG\phi)-\bm H(\sigma,\gradG\psi)}
 {\rho-\sigma}\notag\\
 &-
 \ip{\bm B(\rho,\gradG\phi)-\bm B(\sigma,\gradG\psi)}
 {\gradG\phi-\gradG\psi}_E.
\end{align} 
The differential condition \eqref{dll}, integrated along the segment
between the two density--gradient pairs in \eqref{continuousidentity},
shows that its right-hand side is nonpositive. Hence by \eqref{terminal}, 
\begin{align*}
 \ip{\phi(t)-\psi(t)}{\rho(t)-\sigma(t)}
 &\ge\ip{\phi(T)-\psi(T)}{\rho(T)-\sigma(T)}\\
 &=\ip{g(\rho(T))-g(\sigma(T))}{\rho(T)-\sigma(T)}\ge0.
\end{align*}
Since $u(t,\mu)=\phi(t)$ and $u(t,\nu)=\psi(t)$, this proves
\eqref{umono}.

\textit{Step 2: Assuming now the additional $\mathcal C^2$ regularity of $\bm H$, $\bm B$, and $g$, we prove that the classical solution is of class $\mathcal C^2$.}  
Following the proof of
\cite[Proposition~3.6]{GangboMunozWuZhang2026}, we rescale the forward--backward characteristic system from $[t,T]$ to $[0,1]$ and apply the implicit
function theorem to the resulting problem. 
For $t\in[0,T]$, set
$\widehat\phi(\vartheta)=\phi(t+(T-t)\vartheta)$ and
$\widehat\rho(\vartheta)=\rho(t+(T-t)\vartheta)$ for
$0\le\vartheta\le1$.   Let
\[
 X:=\mathcal C^1([0,1];\bbR^d)\times
 \mathcal C^1([0,1];\bbR_0^d),
 \qquad
 Y:=\mathcal C([0,1];\bbR^d)\times
 \mathcal C([0,1];\bbR_0^d)\times\bbR^d\times\bbR_0^d,
\]
and let $\mathscr X^\circ$ be the set of pairs
$(v,\nu)$ with
$v\in\mathcal C^1([0,1];\bbR^d)$ and
$\nu\in\mathcal C^1([0,1];\bbR_1^d)$ satisfying
$\nu_i>0$ on $[0,1]$ for every $i$.
Identifying $(v,\nu)$ with
$(v,\nu-d^{-1}\bm1)\in X$, we regard $\mathscr X^\circ$ as an open
subset of $X$. 
\iffalse
Indeed, for
$(\widehat\phi,\widehat\rho)\in\mathscr X^\circ$, continuity and compactness
ensure that
$ 
 \delta:=\min_{1\le i\le d}\min_{0\le\vartheta\le1}
 \widehat\rho_i(\vartheta)>0.
$ 
If $(\psi,\eta)\in X$ and
$\norm{\psi}_{\mathcal C^1}+\norm{\eta}_{\mathcal C^1}<\delta/2$, then
\[
 \widehat\rho_i(\vartheta)+\eta_i(\vartheta)
 \ge\delta-\norm{\eta}_{\mathcal C^0}>\frac{\delta}{2},
 \qquad
 \sum_{i=1}^d\bigl(\widehat\rho_i(\vartheta)+\eta_i(\vartheta)\bigr)=1.
\]
Hence $(\widehat\phi+\psi,\widehat\rho+\eta)\in\mathscr X^\circ$, which verifies that $\mathscr X^{\circ}$ is open in $X$. \fi
We also view $\cP^\circ(G)$ as an open subset of
$\bbR_1^d$, and let $I\subset\bbR$ be an open interval containing
$[0,T]$.  Define
$\mathfrak F:I\times\cP^\circ(G)\times\mathscr X^\circ\to Y$
by
{\small\begin{align*}
 &\quad \mathfrak F(t,\mu,v,\nu)\\&
 =\big(v'
-(T-t)\bigl[\bm H(\nu,\gradG v)
             -\DeltaG v\bigr],\nu'
 -(T-t)\bigl[\divG\bm B(\nu,\gradG v)
             +\DeltaG\nu\bigr],v(1)-g(\nu(1)),
 \nu(0)-\mu\big).
\end{align*}}
\!\!For $t\in[0,T]$, a pair $(\phi,\rho)$ solves the characteristic
system~\eqref{mfg} on $[t,T]$ with $\rho(t)=\mu$ if and only if its
rescaling $(\widehat\phi,\widehat\rho)$ satisfies $
 \mathfrak F(t,\mu,\widehat\phi,\widehat\rho)=0.$ 

To apply the implicit function theorem and prove the
$\mathcal C^2$-dependence of the rescaled characteristic pair
$(\widehat\phi,\widehat\rho)$ on $(t,\mu)$,
we verify the following two properties:
\[
 \text{\rm(i)}\quad \mathfrak F\ \text{is of class }\mathcal C^2;
 \qquad
 \text{\rm(ii)}\quad
 D_{(\widehat\phi,\widehat\rho)}
 \mathfrak F(t,\mu,\widehat\phi,\widehat\rho)
 \text{ is an isomorphism}.
\]

 (i) We verify $\mathcal C^2$-regularity of $\mathfrak F$. Indeed, for each
$(v,\nu)\in\mathscr X^\circ$, positivity of
$\nu$ and compactness of $[0,1]$ provide a neighborhood in
$X$ on which the arguments of $\bm H$ and $\bm B$ range in a fixed compact
subset of 
$\mathcal P^{\circ}(G) \times\bbS^{d\times d}$.  
The assumed $\mathcal C^2$
regularity of $\bm H$, $\bm B$, and $g$ therefore implies that $\mathfrak F$ is
of class $\mathcal C^2$.

(ii) Fix $t\in[0,T)$, $\mu\in\cP^\circ(G)$, and
$(\widehat\phi,\widehat\rho)\in\mathscr X^\circ$ satisfying
$
 \mathfrak F(t,\mu,\widehat\phi,\widehat\rho)=0.$ We verify that $\mathfrak L:=
 D_{(\widehat\phi,\widehat\rho)}
 \mathfrak F(t,\mu,\widehat\phi,\widehat\rho):X\to Y
 \text{ is an isomorphism}.$  
For
$(\psi,\eta)\in X$, one can calculate  
{\small\begin{align*}
 \mathfrak L[\psi,\eta]&=\Bigl(\psi'
-(T-t) \bigl[D_\mu\bm H(\widehat\rho,\gradG\widehat\phi )[\eta]
 +D_p\bm H(\widehat\rho,\gradG\widehat\phi )[\gradG\psi]
 -\DeltaG\psi\bigr],\\
 &\eta'-(T-t)\bigl[\divG\bigl(
 D_\mu\bm B(\widehat\rho,\gradG\widehat\phi )[\eta]
 +D_p\bm B(\widehat\rho,\gradG\widehat\phi )[\gradG\psi]\bigr)
 +\DeltaG\eta\bigr],\psi(1)-Dg(\widehat\rho(1))[\eta(1)],\eta(0)\Bigr).
\end{align*}}  
\!The trajectory image
\[
 K_*:=\bigl\{(\widehat\rho(\vartheta), \gradG\widehat\phi(\vartheta)):
 0\le\vartheta\le1\bigr\}
 \Subset\cP^\circ(G)\times\bbS^{d\times d}
\]
is compact. The boundedness of the coefficient derivatives on $K_*$ and that of the
graph operators imply
$
 \norm{\mathfrak L[\psi,\eta]}_Y
 \le C\norm{(\psi,\eta)}_X.
$ 
Hence $\mathfrak L:X\to Y$ is bounded. We now show that $\mathfrak L$ is bijective. 

We first prove that $\mathfrak L$ is injective. 
Suppose first that $\mathfrak L[\psi,\eta]=0$.  By the strict differential monotonicity condition \eqref{dll}, applying
\cite[Remark~2.1]{CuiDangGraphMFG2026} to $K_*$ gives
$c_*>0$ such that \begin{align*}
 &\ip{D_\mu\bm H(\widehat\rho,\gradG\widehat\phi)[\eta]}{\eta}
 +\ip{D_p\bm H(\widehat\rho,\gradG\widehat\phi)[r]}{\eta}\\
 &\quad-\ip{D_\mu\bm B(\widehat\rho,\gradG\widehat\phi)[\eta]}{r}_E
 -\ip{D_p\bm B(\widehat\rho,\gradG\widehat\phi)[r]}{r}_E
 \le-c_*\bigl(\norm{\eta}^2+\norm{r}_E^2\bigr)
\end{align*} 
for $\eta\in\bbR_0^d$ and $r\in\bbS^{d\times d}$. 
 Using \eqref{ibp}, we obtain
\begin{align*}
 \frac{\mathrm d}{\mathrm d\vartheta}\ip{\psi}{\eta}
 &=(T-t) \ip{D_\mu\bm H[\eta]
 +D_p\bm H[\gradG\psi]}{\eta}\\
 &\quad-(T-t) \ip{D_\mu\bm B[\eta]
 +D_p\bm B[\gradG\psi]}{\gradG\psi}_E\le-(T-t)c_*\bigl(\norm{\eta}^2
 +\norm{\gradG\psi}_E^2\bigr),
\end{align*}
where all coefficients are evaluated at
$(\widehat\rho,\gradG\widehat\phi)$.  Since $\eta(0)=0$ and
$\psi(1)=Dg(\widehat\rho(1))[\eta(1)]$, by \eqref{terminal}, we have 
\begin{align*}
0\le\ip{Dg(\widehat\rho(1))[\eta(1)]}{\eta(1)}
 =\ip{\psi(1)}{\eta(1)}\le-(T-t)c_*\int_0^1
 \bigl(\norm{\eta}^2+\norm{\gradG\psi}_E^2\bigr)
 \,\mathrm d\vartheta\le0.
\end{align*}
Hence $\eta=0$ and $\gradG\psi=0$. 
Since $G$ is connected,
the kernel of $\gradG$ is $\operatorname{span}\{\bm1\}$, so
$\psi(\vartheta)\in\operatorname{span}\{\bm1\}$ for every
$\vartheta\in[0,1]$. With $\eta=0$ and $\gradG\psi=0$, the first
component of $\mathfrak L[\psi,\eta]=0$ reduces to
$\psi'=0$. The terminal component reduces to $\psi(1)=0$.
Consequently, $\psi=0$ on $[0,1]$. Thus $\psi=0$ and $\eta=0$,
so $\ker\mathfrak L=\{0\}$ and $\mathfrak L$ is injective.

We next prove that $\mathfrak L$ is surjective.
Fix $(f_1,f_2,\xi,\zeta)\in Y$.  For each
$y\in\bbR^d$, let $(\psi_y,\eta_y)$ solve the two differential equations in
the first two components of 
$
 \mathfrak L[\psi,\eta]=(f_1,f_2,\xi,\zeta)$ 
with initial data $\psi_y(0)=y$ and
$\eta_y(0)=\zeta$.
The coefficient derivatives are continuous along
$(\widehat\rho,\gradG\widehat\phi)$, while $\gradG$, $\divG$, and $\DeltaG$ are
bounded linear operators.  Standard linear ODE theory therefore gives a
unique solution $(\psi_y,\eta_y)\in X$ on $[0,1]$.
The remaining third component is satisfied precisely when
$M(y)=0$, where
\[
 M(y):=\psi_y(1)-Dg(\widehat\rho(1))[\eta_y(1)]-\xi.
\]
The dependence of the solution on $y$ shows that $M$ is affine. If
$M(y_1)=M(y_2)$, then
$(\psi_{y_1}-\psi_{y_2},\eta_{y_1}-\eta_{y_2})$ belongs to
$\ker\mathfrak L$. The injectivity of $\mathfrak L$ therefore gives
$y_1=y_2$, so $M:\bbR^d\to\bbR^d$ is injective. Since $M$ is an
affine map between spaces of the same finite dimension, it is bijective. 
Hence there is a unique $y$ such that $M(y)=0$. For this choice of $y$,
$ 
 \mathfrak L[\psi_y,\eta_y]=(f_1,f_2,\xi,\zeta),
$ 
so $\mathfrak L$ is surjective.  
Thus $\mathfrak L:X\to Y$ is a bounded bijection.
  Since $X$ and $Y$ are Banach
spaces, the bounded inverse theorem ensures
$\mathfrak L^{-1}\in\mathcal L(Y,X)$. Hence $\mathfrak L$ is an isomorphism, which proves~{\rm(ii)}.  

With {\rm(i)} and {\rm(ii)}, we now apply the implicit
function theorem. Fix $(t_0,\mu_0)\in[0,T)\times\cP^\circ(G)$ and let
$(\widehat\phi_0,\widehat\rho_0)$ be the corresponding rescaled
characteristic pair. Then
$ 
\mathfrak F(t_0,\mu_0,\widehat\phi_0,\widehat\rho_0)=0. 
$ 
The implicit function theorem provides a neighborhood
$U$ of $(t_0,\mu_0)$ and a unique map
$ 
(t,\mu)\mapsto
 (\widehat\phi^{t,\mu},\widehat\rho^{t,\mu})\in \mathscr X^\circ
$ 
of class $\mathcal C^2$ on $U$ satisfying
$\mathfrak F(t,\mu,\widehat\phi^{t,\mu},
\widehat\rho^{t,\mu})=0$ and $
(\widehat\phi^{t_0,\mu_0},\widehat\rho^{t_0,\mu_0})
 =(\widehat\phi_0,\widehat\rho_0).$  
For the terminal time $t=T$, at $(T,\mu)$, the unique rescaled solution is 
$\widehat\rho\equiv\mu$ and $\widehat\phi\equiv g(\mu)$.  The operator
$\mathfrak L$ then becomes
$(\psi,\eta)\mapsto
 \bigl(\psi',\eta',\psi(1)-Dg(\mu)[\eta(1)],\eta(0)\bigr).$ 
For $(f_1,f_2,\xi,\nu)\in Y$, its inverse is
\begin{align*}
 \eta(\vartheta)&=\nu+\int_0^\vartheta f_2(s)\,\mathrm ds,
 \qquad \psi(\vartheta)=y+\int_0^\vartheta f_1(s)\,\mathrm ds,\\ y&=\xi+Dg(\mu)\Bigl[\nu+\int_0^1f_2(s)\,\mathrm ds\Bigr]
 -\int_0^1f_1(s)\,\mathrm ds.
\end{align*}
Thus this operator is an isomorphism. Since $I$ is open and contains $T$, the implicit function theorem also
applies at $t=T$.   Uniqueness of
the characteristic solution shows that these local solution maps agree on
their overlaps with $t\in[0,T]$. Therefore the rescaled solution map is of class $\mathcal C^2$ on
$[0,T]\times\cP^\circ(G)$.  Since
$u(t,\mu)=\widehat\phi^{t,\mu}(0)$ and evaluation at zero is a bounded
linear map on $X$, it follows that
$u\in\mathcal C^2([0,T]\times\cP^\circ(G);\bbR^d)$.  

Finally, set
$L_K:=\sup_{(t,\varrho)\in[0,T]\times K}
\norm{D_{\mu\mu}^2u(t,\varrho)}$.  Since $K$ is convex,
\[
 D_\mu u(t,\mu)-D_\mu u(t,\nu)
 =\int_0^1D_{\mu\mu}^2u\bigl(t,\nu+s(\mu-\nu)\bigr)
 [\mu-\nu,\cdot]\mathrm ds.
\] 
Taking the operator norm proves \eqref{dulip}.
\end{proof}

\raggedbottom
\bibliographystyle{abbrv}
\bibliography{references.bib}

\end{document}